%
%
\documentclass[12pt]{amsart}
\usepackage{amscd, amsfonts, amssymb}
\usepackage{fullpage}
\usepackage{latexsym}
\usepackage{verbatim}
\usepackage{enumerate}




\newcommand{\tuple}[1]{{\mathbf #1}}

\newcommand\ra{\rightarrow}

\newcommand{\NN}{{\mathbb N}}

\newcommand{\FF}{{\mathbb F}}


\numberwithin{equation}{section}

\newtheorem{thm}[equation]{Theorem}

\newtheorem{lem}[equation]{Lemma}
\newtheorem{cor}[equation]{Corollary}
\newtheorem{prop}[equation]{Proposition}

\theoremstyle{definition}
\newtheorem{defn}[equation]{Definition}
\newtheorem{exmp}[equation]{Example}

\theoremstyle{remark}
\newtheorem{rem}[equation]{Remark}
\theoremstyle{remark}

\newcommand{\ovl}{\overline}

\subjclass[2010]{14L30 (20G15)}
\keywords{Quasi-projective $G$-varieties; generic stabilisers; principal orbit type; $G$-complete reducibility}


\date{October 8, 2015}

\title[Generic stabilisers for actions of reductive groups]
{Generic stabilisers for actions of reductive groups}

\dedicatory{In memory of Robert Steinberg}

\author[B.\ Martin]{Benjamin Martin}
\address
{Department of Mathematics,
University of Aberdeen,
King's College,
Fraser Noble Building,
Aberdeen AB24 3UE,
United Kingdom}
\email{b.martin@abdn.ac.uk}

\begin{document}

\begin{abstract}
Let $G$ be a reductive algebraic group over an algebraically closed field and let $V$ be a quasi-projective $G$-variety.  We prove that the set of points $v\in V$ such that ${\rm dim}(G_v)$ is minimal and $G_v$ is reductive is open.  We also prove some results on the existence of principal stabilisers in an appropriate sense.
\end{abstract}

\maketitle

\section{Introduction}
\label{sec:intro}

Let $G$ be a reductive linear algebraic group over an algebraically closed field $k$ and let $V$ be a quasi-projective $G$-variety.  For convenience, we assume throughout the paper that $G$ permutes the irreducible components of $V$ transitively (the extension of our results to the general case is straightforward).  An important question in geometric invariant theory is the following: what can we say about generic stabilisers for the $G$-action?  For instance, given $v\in V$, what does the stabiliser $G_v$ tell us about the stabilisers $G_w$ for $w$ near $v$?  Define $V_0$ to be the set of points $v\in V$ such that the stabiliser $G_v$ has minimal dimension.  The basic theory tells us that $V_0$ is open (Lemma~\ref{lem:stabdimcty}).  Here is a deeper result \cite[Prop.\ 8.6]{BR}: if $V$ is affine and there exists an \'etale slice through $v$ for the $G$-action then there exists an open neighbourhood $U$ of $v$ such that for all $w\in U$, $G_w$ is conjugate to a subgroup of $G_v$.  In particular, if ${\rm dim}(G_v)$ is minimal in this case then $G_w^0$ is conjugate to $G_v^0$ for all $w\in U$.  The existence of an \'etale slice requires, among other conditions, that $V$ be affine and the orbit $G\cdot v$ be closed and separable.  If $V$ is affine and $k$ has characteristic 0 then every $v\in V$ such that $G\cdot v$ is closed admits an \'etale slice, but if $k$ has positive characteristic then it can happen that there are no \'etale slices at all, since, for example, orbits need not be separable. 

In this paper we prove some results about properties of generic stabilisers.  Most previous work in this area has dealt with affine varieties and/or fields of characteristic zero only.  Our results hold for quasi-projective varieties and in arbitrary characteristic, although in some cases we get stronger results in characteristic zero.   We need no assumptions on the existence of or properties of closed orbits, and we allow $G$ to be non-connected.
 
Let $V_{\rm red}= \{v\in V_0\mid\mbox{$G_v$ is reductive}\}$.  It is possible for $V_{\rm red}$ to be empty (see Example~\ref{exmp:unipotent}).  Our first main result implies that if $V_{\rm red}$ is nonempty then generic stabilisers are reductive.

\begin{thm}
\label{thm:main}
 $V_{\rm red}$ is an open subvariety of $V$.
\end{thm}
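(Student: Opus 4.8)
The plan is to prove the equivalent statement that the \emph{non-reductive locus} $N:=\{v\in V_0: G_v \text{ is not reductive}\}$ is closed; since $V_0$ is open by Lemma~\ref{lem:stabdimcty}, this yields openness of $V_{\rm red}=V_0\setminus N$. First I would make the standard reductions. As $G_v$ is reductive if and only if $G_v^0$ is, and $G_v^0=((G^0)_v)^0$ while $\dim G_v=\dim (G^0)_v$ for all $v$, we may assume $G$ is connected. Since $V_0$ is itself a quasi-projective $G$-variety and $V_{\rm red},N\subseteq V_0$, we may replace $V$ by $V_0$ and assume $\dim G_v=d$ for \emph{every} $v\in V$, where $d$ is the minimal stabiliser dimension. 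It then suffices to show $N$ is closed in $V$.

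The key idea is to convert ``$G_v$ non-reductive'' into a geometric condition involving parabolic subgroups. Writing $R_u(H)$ for the unipotent radical of $H$, I claim that, \emph{because $\dim G_v=d$ is constant},
\[
N=\big\{\,v\in V:\ \exists\text{ a parabolic subgroup }P\le G\text{ with }G_v^0\subseteq P\ \text{and}\ \dim\big(G_v\cap R_u(P)\big)\ge 1\,\big\}.
\]
If $(v,P)$ satisfies these conditions then $(G_v\cap R_u(P))^0$ is a nontrivial connected unipotent subgroup of $G_v^0$, normal because $G_v^0\subseteq P$ normalises $R_u(P)$, so $R_u(G_v^0)\ne 1$. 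Conversely, if $G_v$ is not reductive then $U:=R_u(G_v^0)$ is a nontrivial unipotent subgroup, and the Borel--Tits theorem produces a parabolic $P$ with $U\subseteq R_u(P)$ and $N_G(U)\subseteq P$; then $G_v^0\subseteq N_G(U)\subseteq P$ and $\dim(G_v\cap R_u(P))\ge\dim U\ge 1$. This application of Borel--Tits is the only genuinely nonformal ingredient.

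Finally I would push this condition forward from a flag variety. Let $\mathcal P$ be the variety of parabolic subgroups of $G$ (a disjoint union of partial flag varieties, hence complete), and set
\[
Z:=\big\{(v,P)\in V\times\mathcal P:\ G_v^0\subseteq P\ \text{and}\ \dim(G_v\cap R_u(P))\ge 1\big\},
\]
so that $N$ is the image of $Z$ under the projection $V\times\mathcal P\to V$, which is closed since $\mathcal P$ is complete. It thus suffices to show $Z$ is closed. The two universal incidence sets $\{(g,P):g\in P\}$ and $\{(g,P):g\in R_u(P)\}$ are closed in $G\times\mathcal P$ (by descent, one conjugacy class at a time, from right-invariant closed subsets of $G\times G$); intersecting each with $\{(v,g):gv=v\}\times\mathcal P$ and projecting to $V\times\mathcal P$ shows, by upper semicontinuity of fibre dimension, that $(v,P)\mapsto\dim(G_v\cap P)$ and $(v,P)\mapsto\dim(G_v\cap R_u(P))$ are upper semicontinuous. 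The second gives that $\{\dim(G_v\cap R_u(P))\ge 1\}$ is closed; for the first, constancy of $\dim G_v$ forces $\dim(G_v\cap P)\le d$ with equality exactly when $G_v^0\subseteq P$, so $\{G_v^0\subseteq P\}=\{\dim(G_v\cap P)\ge d\}$ is closed. Hence $Z$ is closed and the proof is complete.

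The step I expect to require the most care is pinning the parabolic criterion down so that it is visibly a closed condition: the passage from ``$G_v\subseteq P$'' to the dimension-theoretic ``$G_v^0\subseteq P$'' is exactly where the hypothesis $v\in V_0$ is used (without constant stabiliser dimension, $\{\dim(G_v\cap P)\ge d\}$ would sweep in unwanted points), and one must be careful to apply Borel--Tits so as to control $G_v^0$ and not merely $R_u(G_v^0)$. The reductions to $G$ connected and the verification that the two universal incidence sets are closed are routine but should not be skipped.
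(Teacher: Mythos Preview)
Your proof is correct and follows essentially the same approach as the paper: characterise non-reductive stabilisers by the existence of a parabolic $P$ with $G_v^0\subseteq P$ and $\dim(G_v\cap R_u(P))\ge 1$, then use completeness of the flag variety to conclude closedness. The only difference is packaging: the paper fixes one representative $P$ per conjugacy class, shows each $V_{P,1}$ is $P$-stable and closed, and proves $G\cdot V_{P,1}$ is closed via the quotient $V\times G\to V\times G/P$ (Lemma~\ref{lem:closedcrit}), whereas you work globally over the full variety $\mathcal P$ of parabolics and project from $V\times\mathcal P$; the paper cites the minimal-parabolic argument (equivalent to your Borel--Tits step) rather than naming it explicitly.
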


\noindent A key ingredient in the proof is the Projective Extension Theorem (see Lemma~\ref{lem:projextn}).

We mention two related results.  First, it follows from \cite[Cor.~9.1.2]{rich72acta} that if $G$ is a complex linear algebraic group---not necessarily reductive---and $V$ is a smooth algebraic transformation space for $G$ then $V_{\rm red}$ is open.  Second, V. Popov proved the following \cite{pop} (cf.\ \cite{LunaVust}).  Let $G$ be a connected linear algebraic group---not necessarily reductive, and in arbitrary characteristic---such that $G$ has no nontrivial characters, and let $V$ be an irreducible normal algebraic variety on which $G$ acts such that the divisor class group ${\rm Cl}(V)$ has no elements of infinite order.  Then generic orbits $G$-orbits on $V$ are closed if generic $G$-orbits on $V$ are affine, and the converse also holds if $V$ is affine.

Richardson proved that if $G$ is reductive and $V$ is an affine $G$-variety then an orbit $G\cdot v$ is affine if and only if the stabiliser $G_v$ is reductive \cite[Thm.~A]{Rich77}.  Suppose $V$ is affine and there exists a closed orbit $G\cdot v$ of maximum dimension; then the union of the closed orbits of maximal dimension is open in $V$ \cite[Prop.\ 3.8]{New}.  It follows from Richardson's result that there is an open dense set of points $v\in V$ such that $G_v$ is reductive.  Theorem~\ref{thm:main} extends this to the case when generic orbits are not closed, without the affineness assumption.

Richardson's result discussed above gives the following immediate corollary to Theorem~\ref{thm:main} (note that $G_v$ has minimal dimension if and only if the orbit $G\cdot v$ has maximum dimension).

\begin{cor}
\label{cor:affine}
 Suppose $V$ is affine.  Then the set $v\in V$ such that ${\rm dim}(G\cdot v)$ is maximal and $G\cdot v$ is affine is open.
\end{cor}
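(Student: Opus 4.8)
The plan is to read off the corollary from Theorem~\ref{thm:main} together with Richardson's characterisation of affine orbits, which was already recalled in the introduction. The first step is to replace the dimension condition by a condition on stabilisers: since $\dim(G\cdot v)=\dim G-\dim G_v$ for every $v\in V$, the orbit $G\cdot v$ has maximal dimension if and only if $\dim(G_v)$ is minimal, i.e.\ if and only if $v\in V_0$. Thus the set whose openness we must establish is precisely $\{v\in V_0\mid G\cdot v\text{ is affine}\}$.

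The second step is to invoke Richardson's theorem \cite[Thm.~A]{Rich77}: as $G$ is reductive and $V$ is affine, an orbit $G\cdot v$ is affine if and only if the stabiliser $G_v$ is reductive. Combining this with the previous step, the set in question coincides with $V_{\rm red}=\{v\in V_0\mid G_v\text{ is reductive}\}$. The final step is then immediate: Theorem~\ref{thm:main} says that $V_{\rm red}$ is open in $V$, which is exactly the assertion of the corollary.

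I do not anticipate any genuine difficulty, since all the substantive content lies in Theorem~\ref{thm:main} and in Richardson's theorem, both of which may be taken as given. The only point that warrants a short remark is that Richardson states his result for a connected reductive group acting on an irreducible affine variety, whereas here $G$ may be non-connected and $V$ may be reducible; one reduces to Richardson's setting by passing to $G^0$ and to the irreducible orbit $G^0\cdot v$, using that $G\cdot v$ is affine if and only if $G^0\cdot v$ is (a finite surjective morphism has affine source if and only if it has affine target) and that $G_v$ is reductive if and only if $G^0\cap G_v$ is. This is routine and consistent with the standing convention that the general case follows easily from the case where $G$ permutes the irreducible components of $V$ transitively.
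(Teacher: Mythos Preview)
Your argument is correct and matches the paper's own: the corollary is stated as an immediate consequence of Theorem~\ref{thm:main} and Richardson's theorem, with the same observation that maximal orbit dimension is equivalent to minimal stabiliser dimension. Your additional remark on reducing to the connected/irreducible case is extra care beyond what the paper spells out, but the approach is identical.
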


We give an application of Theorem~\ref{thm:main}.  Nisnevi{\v{c}} \cite{nisnevic} proved the following result when ${\rm char}(k)= 0$ and $t= 1$\footnote{Wallach also has a proof in this case \cite{wallach}.}; he also proved that the subset $A$ is nonempty in this special case.

\begin{thm}
\label{thm:subgpint}
 Let $M,H_1,\ldots, H_t$ be subgroups of a reductive group $G$ such that $M$ is reductive.  Let
 $$ A= \{(g_1,\ldots, g_t)\in G^t\mid \mbox{$M\cap g_1H_1g_1^{-1}\cap \cdots \cap g_tH_tg_t^{-1}$ is reductive and has minimal dimension}\}. $$
 Then $A$ is open.
\end{thm}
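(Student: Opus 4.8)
The plan is to deduce Theorem~\ref{thm:subgpint} from Theorem~\ref{thm:main} by exhibiting the right quasi-projective $G$-variety. Set $V = G/H_1 \times \cdots \times G/H_t$, on which $G$ acts diagonally; this is a quasi-projective $G$-variety, and $G$ permutes its irreducible components transitively since the action is transitive on each factor and hence on the set of components. The stabiliser of a point $(g_1 H_1, \ldots, g_t H_t)$ is precisely $g_1 H_1 g_1^{-1} \cap \cdots \cap g_t H_t g_t^{-1}$. Now intersect with the fixed reductive group $M$: I would consider the $M$-variety obtained by restricting the $G$-action to $M$, so that for $v = (g_1 H_1, \ldots, g_t H_t)$ the stabiliser $M_v$ equals $M \cap g_1 H_1 g_1^{-1} \cap \cdots \cap g_t H_t g_t^{-1}$. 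Applying Theorem~\ref{thm:main} with the group $M$ in place of $G$ shows that $V_{\mathrm{red}}$, the set of $v$ with $\dim(M_v)$ minimal and $M_v$ reductive, is open in $V$.

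The remaining work is to transport this openness statement back to $G^t$. The natural map $q \colon G^t \to V$, $(g_1, \ldots, g_t) \mapsto (g_1 H_1, \ldots, g_t H_t)$, is a morphism (indeed a surjective, open, faithfully flat quotient map, being a product of quotient morphisms $G \to G/H_i$). A point $(g_1, \ldots, g_t)$ lies in $A$ exactly when its image $q(g_1, \ldots, g_t)$ lies in $V_{\mathrm{red}}$: the condition that $M \cap \bigcap_i g_i H_i g_i^{-1}$ be reductive and of minimal dimension depends only on the cosets $g_i H_i$, and minimal over $G^t$ is the same as minimal over $V$ since $q$ is surjective. Hence $A = q^{-1}(V_{\mathrm{red}})$ is open in $G^t$ by continuity of $q$.

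I expect the main point requiring care to be the verification that $V$ genuinely falls within the scope of Theorem~\ref{thm:main}: namely that $G/H_i$ is quasi-projective (this is Chevalley's theorem, valid for arbitrary closed subgroups $H_i$, with no reductivity or connectedness hypothesis needed on the $H_i$), that the product $V$ is therefore quasi-projective, and that the "$G$ permutes irreducible components transitively" convention of the paper is met after passing to the group $M$ — one should check that $M$ acts transitively on the components of $V$, or else invoke the remark in the introduction that the general case reduces to this one. A secondary point is confirming that $q$ is open (or at least continuous, which is all that is strictly needed) so that the preimage of the open set $V_{\mathrm{red}}$ is open; continuity alone suffices, so this is routine. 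Everything else is a direct translation of stabilisers under the coset identification, and no characteristic or separability hypotheses enter, consistently with the generality claimed for Theorem~\ref{thm:main}.
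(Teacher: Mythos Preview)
Your proposal is correct and matches the paper's proof essentially line for line: set $V=G/H_1\times\cdots\times G/H_t$, let $M$ act diagonally, identify $M_v$ with $M\cap\bigcap_i g_iH_ig_i^{-1}$, apply Theorem~\ref{thm:main} to the $M$-variety $V$, and pull $V_{\rm red}$ back along the quotient map $G^t\to V$. Your caution about $M$ possibly failing to permute the components of $V$ transitively is well placed; the paper handles this silently via the blanket remark in the introduction that the general case reduces to the transitive one.
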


\noindent We do not know in general whether $A$ can be empty in positive characteristic, not even when $t= 1$ and $H_1= M$.

If generic stabilisers are reductive, it is reasonable to try to pin down which reductive subgroups of $G$ actually appear as stabilisers.  We say that a subgroup $H$ of $G$ is a {\em principal stabiliser} for the $G$-variety $V$ if there is a nonempty open subset $O$ of $V$ such that $G_v$ is conjugate to $H$ for all $v\in O$.  We then say that $V$ has a {\em principal orbit type}.  Under our assumptions on $G$ and $V$, a principal stabiliser is unique up to conjugacy if it exists.  Richardson proved that if ${\rm char}(k)= 0$ and $V$ is smooth and affine then a principal stabiliser exists \cite[Prop.\ 5.3]{rich72}.

It turns out that in positive characteristic, the condition of conjugacy of the stabilisers is too strong: Example~\ref{exmp:notprinc} below shows that even if generic stabilisers are connected and reductive, a principal stabiliser need not exist.  To obtain a result, we need to weaken the notion of principal stabiliser.  Let $M\leq G$ and let $P$ be a minimal R-parabolic subgroup of $G$ containing $M$ (see Section~\ref{sec:prelims} for the definition of R-parabolic subgroups), let $L$ be an R-Levi subgroup of $P$ and let $\pi_L\colon P\ra L$ be the canonical projection.  It can be shown that up to $G$-conjugacy, $\pi_L(M)$ does not depend on the choice of $P$ and $L$ (cf.\ \cite[Prop.\ 5.14(i)]{GIT}).  We define ${\mathcal D}(M)$ to be the conjugacy class $G\cdot \pi_L(M)$, and we call this the {\em $G$-completely reducible degeneration} of $M$ (see Section~\ref{sec:Gcr} for the definition of $G$-complete reducibility).  Our second main result says that the ${\mathcal D}(G_v)$ are equal for generic $v$.

\begin{thm}
\label{thm:genericstab}
 There exist a $G$-completely reducible subgroup $H$ of $G$ and a nonempty open subset $O$ of $V$ such that ${\mathcal D}(G_v)= G\cdot H$ for all $v\in O$.
\end{thm}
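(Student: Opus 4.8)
The plan is to prove the theorem by induction on $\dim G$, using the construction $\mathcal D$ to convert the problem for $G$ into a problem for an R-Levi subgroup $L$ of a proper R-parabolic subgroup of $G$, and invoking Theorem~\ref{thm:main} and the Projective Extension Theorem (Lemma~\ref{lem:projextn}) where needed. First I would make two harmless reductions: since $G_{gv}=gG_vg\inverse$ and $\mathcal D(gMg\inverse)=\mathcal D(M)$ for all $g\in G$, and $G$ permutes the irreducible components of $V$ transitively, we may assume $V$ irreducible; and by Lemma~\ref{lem:stabdimcty} we may replace $V$ by the $G$-stable dense open set $V_0$, so that $\dim G_v=d$ for all $v\in V$. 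I will freely use the following properties of $\mathcal D$: $\pi_L(M)$ has the same dimension as $M$, so the subgroup $H$ produced will have dimension $d$; if $P$ is minimal among R-parabolics containing $M$ then $\pi_L(M)$ is $G$-completely reducible; $\mathcal D(M)=G\cdot M$ exactly when $M$ is $G$-cr; and if $M$ is contained in an R-Levi $L$ of $G$ then $M$ is $G$-cr if and only if $M$ is $L$-cr.

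\emph{The inductive step.} Suppose $G_v$ is contained in a proper R-parabolic subgroup of $G$ for a dense set of $v$. By a constructibility argument (there are finitely many conjugacy classes of R-parabolics, and for each the condition that $G_v$ lie in a member and in no smaller R-parabolic is constructible in $v$) there is a dense open $G$-stable subset $O_1\sse V$, a proper R-parabolic $P$ with R-Levi $L$ and canonical projection $\pi\colon P\ra L$, and a subset $Y\sse O_1$ with $G\cdot Y$ dense, such that $P$ is a minimal R-parabolic containing $G_v$ for all $v\in Y$; let $Y_1$ be an irreducible component of $Y$ with $G\cdot Y_1$ dense. For $v\in Y_1$ we then have $\mathcal D(G_v)=G\cdot\pi(G_v)$ by definition. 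Now $P\cdot v\iso P/G_v$ since $G_v\le P$, so $R_u(P)\backslash(P\cdot v)\iso L/\pi(G_v)$; passing to the $P$-stable set $P\cdot Y_1$, taking its closure in $V$, and applying a Rosenlicht-type geometric quotient by the unipotent group $R_u(P)$ (this is where quasi-projectivity must be controlled, via the Projective Extension Theorem) produces an irreducible quasi-projective $L$-variety $X$ on which the generic $L$-stabiliser has dimension $d$ and equals $\pi(G_v)$ at the image of a generic $v\in Y_1$. Since $\dim L<\dim G$, the inductive hypothesis applied to $(L,X)$ gives an $L$-cr subgroup $K\le L$ with $\mathcal D_L(L_x)=L\cdot K$ generically on $X$; hence $\mathcal D_L(\pi(G_v))=L\cdot K$ for generic $v\in Y_1$. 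But $\pi(G_v)$ is $G$-cr (as $P$ is minimal) and so $L$-cr, giving $\mathcal D_L(\pi(G_v))=L\cdot\pi(G_v)$, so $\pi(G_v)$ is $L$-conjugate, hence $G$-conjugate, to $K$. Therefore $\mathcal D(G_v)=G\cdot\pi(G_v)=G\cdot K$ for generic $v\in Y_1$, and, as $G\cdot Y_1$ is dense and $\mathcal D$ is constant on $G$-orbits, for generic $v\in V$; take $H=K$.

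\emph{The terminal case.} It remains to treat the case where $G_v$ lies in no proper R-parabolic for generic $v$, so $G_v$ is $G$-irreducible, in particular $G$-cr, and $\mathcal D(G_v)=G\cdot G_v$; here the statement reduces to showing $V$ has a principal orbit type with $G$-cr principal stabiliser. For this I would use the characterisation of $G$-complete reducibility via closed orbits: choosing morphisms $g_1,\dots,g_m$ on a dense open set with $G_v=\ovl{\langle g_1(v),\dots,g_m(v)\rangle}$, the tuple $(g_1(v),\dots,g_m(v))$ has closed $G$-orbit in $G^m$; combined with the constancy of $\dim N_G(G_v)$ on a dense open set and a descending analysis of the component group via $N_G(G_v^0)/G_v^0$, one concludes that these subgroups are generically $G$-conjugate (the case $G$ a torus being the base, where stabilisers of an action take only finitely many values).

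\emph{The main obstacle.} The crux is the terminal case: excluding a positive-dimensional family of pairwise non-conjugate $G$-irreducible (or merely $G$-cr) subgroups occurring as generic stabilisers. Because such subgroups need not be rigid in positive characteristic, a purely infinitesimal argument (vanishing of $H^1$) is unavailable, and one must genuinely exploit that the members are stabilisers — equivalently, that $G\cdot v$ has maximal dimension — together with the closed-orbit criterion, reducing where necessary to the easier rigidity of linearly reductive stabilisers. The remaining points — the quasi-projectivity and irreducibility of the quotient $X$, the verification of the stabiliser-dimension bookkeeping, and the passage between a group and its identity component in the inductive step — are technical but routine given the Projective Extension Theorem and Theorem~\ref{thm:main}.
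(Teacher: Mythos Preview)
Your approach is quite different from the paper's. The paper does not induct on $\dim G$ at all: it works directly with the stabiliser variety $C\subseteq V\times G^N$, invokes the countability of ${\mathcal C}(G)_{\rm cr}$ (Lemma~\ref{lem:countable}), builds the partial order $\preceq$ of Section~\ref{sec:preceq}, extends scalars to an uncountable field, and then uses a Baire-type argument (Corollary~\ref{cor:qcmpctirred}) to pick a $\preceq$-minimal $H$ with $G\cdot D_m\subseteq\phi^{-1}(S(H))$ and show $B_H$ has nonempty interior. No quotient by $R_u(P)$ and no passage to a smaller Levi is involved.

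There is a genuine gap in your proposal, and you have already located it: the terminal case. Once you reduce to ``generic $G_v$ is $G$-irreducible'', you must show these $G_v$ are all $G$-conjugate, and your sketch does not do this. First, you cannot in general produce morphisms $g_1,\ldots,g_m$ on an open set with $G_v=\ovl{\langle g_1(v),\ldots,g_m(v)\rangle}$: generating tuples do not vary algebraically with $v$ (the paper remarks explicitly in Section~\ref{sec:Gcr} that the set of generating tuples is badly behaved), and over $\ovl{\FF_p}$ no such tuples exist at all. Second, even granting such a map $V\dashrightarrow G^m$ into closed orbits of constant dimension, nothing forces the induced map to $G^m/G$ to be constant; constancy of $\dim N_G(G_v)$ is neither obviously available (it is not a stabiliser for a variety action) nor sufficient. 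In positive characteristic there are genuine positive-dimensional families of pairwise non-conjugate $G$-cr subgroups, so a rigidity shortcut is unavailable. In fact the statement you need in the terminal case---that a $G$-ir principal stabiliser exists once some $G_v$ is $G$-ir---is precisely what the paper \emph{derives from} Theorem~\ref{thm:genericstab} in Remark~\ref{rem:genericGirstab}, not an independent ingredient. The paper's countability-plus-$\preceq$-plus-Baire machinery is exactly what substitutes for the missing rigidity; without a replacement for it, your induction does not close.

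A smaller point: the claim ``$\pi_L(M)$ has the same dimension as $M$'' is false when $M$ is not reductive; the correct statement (see Section~\ref{sec:Gcr}) is $\dim c_\lambda(M)=\dim M-\dim R_u(M)$. This does not by itself break the induction on $\dim G$, but it does mean your dimension bookkeeping for the $L$-stabilisers on $X$ needs to be redone via $V_{\rm min}$ (Remark~\ref{rem:Vmin}) rather than $V_0$.
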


If $G$ is connected and every stabiliser is unipotent then ${\mathcal D}(G_v)= 1$ for all $v\in V$, so we don't learn much about the structure of the stabilisers.  Under some extra hypotheses, however, we can deduce the existence of a principal stabiliser.

\begin{cor}
\label{cor:Gcrprinc}
 Suppose there is a nonempty open subset $O$ of $V$ such that $G_v$ is $G$-completely reducible for all $v\in O$.  Then the subgroup $H$ from Theorem~\ref{thm:genericstab} is a principal stabiliser for $V$.
\end{cor}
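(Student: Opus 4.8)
The plan is to deduce this straight from Theorem~\ref{thm:genericstab}, the bridge between the two statements being the elementary fact that $\mathcal{D}(M)= G\cdot M$ whenever $M$ is a $G$-completely reducible subgroup of $G$. So the first step is to record that fact. Given such an $M$, I would pick a minimal R-parabolic subgroup $P$ of $G$ containing $M$ (the existence of such a $P$ is already built into the definition of $\mathcal{D}(M)$), use $G$-complete reducibility of $M$ to find an R-Levi subgroup $L$ of $P$ with $M\leq L$, and then observe that the canonical projection $\pi_L\colon P\ra L$ restricts to the identity on $L$; hence $\pi_L(M)= M$, and so $\mathcal{D}(M)= G\cdot\pi_L(M)= G\cdot M$. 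Since $\mathcal{D}(M)$ does not depend on the choices of $P$ and $L$ (as recalled just before Theorem~\ref{thm:genericstab}; cf.\ \cite[Prop.\ 5.14]{GIT}), it suffices to compute it for this one convenient choice.

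With that in hand, let $H$ be the $G$-completely reducible subgroup and $O_1$ the nonempty open subset supplied by Theorem~\ref{thm:genericstab}, so that $\mathcal{D}(G_v)= G\cdot H$ for all $v\in O_1$. The next step is to arrange that the two relevant open sets meet. Both the hypothesis ``$G_v$ is $G$-completely reducible'' and the conclusion ``$\mathcal{D}(G_v)= G\cdot H$'' are unchanged when $v$ is replaced by $gv$, since $G_{gv}= gG_vg^{-1}$; hence, after replacing $O$ and $O_1$ by their $G$-saturations, I may assume both are $G$-stable. Because $G$ permutes the finitely many irreducible components of $V$ transitively, any nonempty $G$-stable open subset of $V$ is dense, so $O\cap O_1$ is nonempty and open. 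Finally, for $v\in O\cap O_1$ the fact from the first step gives $\mathcal{D}(G_v)= G\cdot G_v$, while Theorem~\ref{thm:genericstab} gives $\mathcal{D}(G_v)= G\cdot H$; comparing, $G_v$ is conjugate to $H$. Thus $H$ is a principal stabiliser for $V$, witnessed by the open set $O\cap O_1$.

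I do not expect a serious obstacle here, since essentially all of the work is in Theorem~\ref{thm:genericstab}. The two points requiring a little care are the identity $\mathcal{D}(M)= G\cdot M$ for $G$-completely reducible $M$ (which is immediate from the definition of $\mathcal{D}$ once one knows $\pi_L$ is the identity on $L$) and the observation that two ``generically valid'' open conditions can be intersected, handled by passing to $G$-saturations and invoking transitivity of $G$ on the irreducible components of $V$ — the same device that underlies Lemma~\ref{lem:stabdimcty}.
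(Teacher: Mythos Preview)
Your proposal is correct and follows essentially the same route as the paper's proof: intersect the open set $O$ from the hypothesis with the open set $O'$ provided by Theorem~\ref{thm:genericstab} (after making both $G$-stable so that they are dense and hence meet), and then use that ${\mathcal D}(G_v)= G\cdot G_v$ for $G$-cr $G_v$ to conclude that $G_v$ is conjugate to $H$ on the intersection. One small quibble: your parenthetical remark that this ``device'' underlies Lemma~\ref{lem:stabdimcty} is off --- that lemma is a pure upper-semicontinuity statement and does not involve transitivity on components.
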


\begin{cor}
\label{cor:char0princ}
 Suppose ${\rm char}(k)= 0$ and $V_{\rm red}$ is nonempty.  Then the subgroup $H$ from Theorem~\ref{thm:genericstab} is a principal stabiliser for $V$.
\end{cor}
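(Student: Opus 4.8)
The plan is to deduce Corollary~\ref{cor:char0princ} directly from Corollary~\ref{cor:Gcrprinc}, the only extra ingredient being the well-known fact that in characteristic zero $G$-complete reducibility coincides with reductivity.

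First I would note that, by Theorem~\ref{thm:main}, $V_{\rm red}$ is open in $V$, and it is nonempty by hypothesis. So it suffices to show that $G_v$ is $G$-completely reducible for every $v\in V_{\rm red}$ and then apply Corollary~\ref{cor:Gcrprinc} with $O= V_{\rm red}$: this immediately gives that the subgroup $H$ of Theorem~\ref{thm:genericstab} is a principal stabiliser for $V$.

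To see that $G_v$ is $G$-completely reducible for $v\in V_{\rm red}$, I would recall (see Section~\ref{sec:Gcr}) that when ${\rm char}(k)= 0$ a closed subgroup $M$ of $G$ is $G$-completely reducible if and only if it is reductive. One implication holds in all characteristics: if $M$ is $G$-cr then $R_u(M)= 1$. For the converse, suppose $M$ is reductive and $M\leq P$ for some R-parabolic subgroup $P$ of $G$ with R-Levi subgroup $L$ and canonical projection $\pi_L\colon P\ra L$; the kernel $M\cap R_u(P)$ of $\pi_L|_M$ is a unipotent normal subgroup of $M$, hence trivial, and in characteristic zero linear reductivity of $M$ allows one to conjugate $M$ into $L$ by an element of $R_u(P)$, so $M$ is $G$-cr. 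Since each $G_v$ with $v\in V_{\rm red}$ is reductive by definition of $V_{\rm red}$, it is therefore $G$-completely reducible, and the hypothesis of Corollary~\ref{cor:Gcrprinc} is satisfied with $O= V_{\rm red}$.

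The hard part is genuinely all contained in Theorem~\ref{thm:genericstab} and Corollary~\ref{cor:Gcrprinc}; at this last step the only point requiring care is to have the characteristic-zero equivalence between reductivity and $G$-complete reducibility quoted correctly from the $G$-cr literature rather than re-derived in detail. (One could also observe in passing that, since $V_{\rm red}$ is $G$-stable and $G$ permutes the irreducible components of $V$ transitively, $V_{\rm red}$ is in fact dense, though only nonemptiness and openness are needed to invoke Corollary~\ref{cor:Gcrprinc}.)
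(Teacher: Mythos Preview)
Your proposal is correct and follows essentially the same route as the paper: the paper proves Corollary~\ref{cor:char0princ} by observing that $V_{\rm red}$ is open by Theorem~\ref{thm:main}, that in characteristic zero reductive subgroups are $G$-cr (citing Section~\ref{sec:Gcr}), and then invoking Corollary~\ref{cor:Gcrprinc} with $O= V_{\rm red}$. Your additional sketch of why reductivity implies $G$-complete reducibility in characteristic zero is not needed---the paper simply quotes this as a known fact---but it does no harm.
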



If we restrict ourselves to the identity components of stabilisers then we get slightly stronger results.

\begin{thm}
\label{thm:genericstab_weak_conn}
 Suppose $V_{\rm red}$ is nonempty.  There exists a connected $G$-completely reducible subgroup $H$ of $G$ such that ${\mathcal D}(G_v^0)= G\cdot H$ for all $v\in V_{\rm red}$.
\end{thm}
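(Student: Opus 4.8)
The plan is to pin down the generic value of the map $v\mapsto\mathcal{D}(G_v^0)$ on $V_{\rm red}$ and then to prove that this value is attained at \emph{every} point of $V_{\rm red}$, via a rigidity argument exploiting that the stabilisers in question are all reductive of one fixed dimension. Throughout I use that $V_{\rm red}$ is open (by Theorem~\ref{thm:main}) and $G$-stable, hence meets every irreducible component of $V$ and is dense in $V$ (as $G$ permutes the components transitively), and that $\dim G_v = d$ for all $v\in V_{\rm red}$, where $d$ is the minimal stabiliser dimension.

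\emph{Step 1: the generic value.} First I would record the elementary observation that for any subgroup $M\leq G$, with $P$ a minimal R-parabolic containing $M$, $L$ an R-Levi of $P$ and $H_0:=\pi_L(M)$, one has $\mathcal{D}(M^0)=G\cdot H_0^0$. Indeed $\pi_L$ maps the finite-index subgroup $M^0$ of $M$ onto $H_0^0$; the group $H_0^0$ is again $G$-completely reducible (identity components of $G$-cr subgroups are $G$-cr); and, picking a tuple $\tuple m$ generating $M^0$ as an algebraic group, the point $\lim_{t\to 0}\lambda(t)\tuple m\lambda(t)^{-1}$ (where $P=P_\lambda$, $L=L_\lambda$) generates $H_0^0$ and lies in $\overline{G\cdot\tuple m}$; since $H_0^0$ is $G$-cr this orbit is closed, so it is the unique closed $G$-orbit in $\overline{G\cdot\tuple m}$, which is exactly the orbit computing $\mathcal{D}(M^0)$ in the sense of \cite{GIT}. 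Applying this with $M=G_v$ and combining with Theorem~\ref{thm:genericstab}, I obtain a dense open $O\subseteq V_{\rm red}$ and a connected $G$-completely reducible subgroup $H$ (namely $H=H_0^0$ for the $H_0$ furnished by Theorem~\ref{thm:genericstab}) with $\mathcal{D}(G_v^0)=G\cdot H$ for all $v\in O$; moreover $\dim H=d$, since $G_v^0$ is connected reductive, so $G_v^0\cap R_u(P)$ is a finite central subgroup of $G_v^0$ consisting of unipotent elements, hence trivial, whence $\pi_L$ is injective on $G_v^0$. (Alternatively, one runs the proof of Theorem~\ref{thm:genericstab} with identity components throughout.)

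\emph{Step 2: the key semicontinuity.} The heart of the proof is to show that for every conjugacy class $C$ of connected $G$-cr subgroups of $G$, the level set $V_C:=\{v\in V_{\rm red}\mid\mathcal{D}(G_v^0)=C\}$ is closed in $V_{\rm red}$. To prove this I would take $v_0$ in the closure of $V_C$ in $V_{\rm red}$ and choose a smooth curve $\phi\colon\widetilde{C}\to V_{\rm red}$ through $v_0$ whose generic point maps into $V_C$, form the stabiliser group scheme $\mathcal{G}\to\widetilde{C}$ with fibres $G_{\phi(c)}$, and compare the $G$-completely reducible degeneration of the generic fibre with that of the special fibre $G_{v_0}$. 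Using a relative version of the Kempf--Rousseau construction — for a suitable tuple of sections of $\mathcal{G}$ topologically generating the fibres one obtains a rational map $\widetilde{C}\dashrightarrow G^n$ along which the optimal destabilising cocharacters, and hence the closed-orbit limits computing $\mathcal{D}$, vary algebraically — together with the description of $G$-complete reducibility via cocharacter-closed $G$-orbits on $G^n$ \cite{GIT}, one shows (this is where reductivity of $G_{v_0}$, i.e.\ $v_0\in V_{\rm red}$, is needed) that $\mathcal{D}(G_{v_0}^0)$ lies below $C=\mathcal{D}(G_{\phi(c)}^0)$ in the GIT degeneration order. Since $\mathcal{D}(G_{v_0}^0)$ and $C$ are both conjugacy classes of $G$-cr subgroups, and these are precisely the minimal classes in that order (a $G$-cr subgroup contained in an R-parabolic lies in an R-Levi, so projects to a conjugate of itself and admits no proper degeneration), this forces $\mathcal{D}(G_{v_0}^0)=C$, i.e.\ $v_0\in V_C$. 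Granting closedness of the $V_C$: the set $V_{G\cdot H}$ is closed in $V_{\rm red}$ and contains the dense subset $O$ of Step~1, hence equals $V_{\rm red}$, which is the theorem.

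I expect the main obstacle to be precisely Step~2: making the spreading-out of $\mathcal{D}$ over the curve rigorous in arbitrary characteristic. One must work with the stabiliser group scheme $\mathcal{G}$, which may fail to be reduced or flat over $\widetilde{C}$ when $\Char(k)>0$; one must choose the generating sections so that they genuinely topologically generate the stabilisers along $\widetilde{C}$; and, most delicately, one must control the well-known phenomenon that Kempf's optimal cocharacters can jump at special points, so that the limit of the $G$-cr degenerations of the generic fibres need not literally coincide with the $G$-cr degeneration of the limit fibre — one has to show nonetheless that both determine the same cocharacter-closed, in fact minimal, orbit. Once Step~2 is established, the remaining points (reduction to a curve, constructibility and closedness of the $V_C$, density of $O$ coming from transitivity of $G$ on components, and connectedness of $H$) are routine.
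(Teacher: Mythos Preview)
Your Step~1 is sound and close to the paper's opening move: take $H=(H')^0$ where $H'$ comes from Theorem~\ref{thm:genericstab}, and record that $\dim H=d$ because the relevant stabilisers are reductive.

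Step~2, however, has a genuine gap, and the paper's argument is quite different from what you sketch. Two problems with your route:

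\emph{(i) The minimality claim is in the wrong order.} In the order you describe (``$A$ below $B$'' meaning $B\le P_\lambda$ and $c_\lambda(B)$ conjugate to $A$), $G$-cr classes are indeed minimal; but the inequality you can actually extract from a specialisation argument lives in the partial order $\preceq$ of Section~\ref{sec:preceq}: if your limit tuple $\tuple g_0$ lands in $S(H)$ (which it does, since $S(H)$ is closed by Theorem~\ref{thm:finiteness}), you get ${\mathcal D}({\mathcal G}(\tuple g_0))\preceq G\cdot H$ via Lemma~\ref{lem:Psiimage}. In the order $\preceq$, $G$-cr classes are \emph{not} minimal (e.g.\ $G\cdot 1\preceq G\cdot H$ for every $H$), so minimality alone does not force equality. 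What does force equality is the dimension constraint you noted in Step~1 and then did not use: both sides are connected $G$-cr of dimension $d$, and Lemma~\ref{lem:quotient} plus a dimension chase then gives conjugacy.

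\emph{(ii) Making the limit tuple generate $G_{v_0}^0$.} Even with the correct endgame, your curve argument needs the limit $\tuple g_0$ of the generating sections to generate $G_{v_0}^0$ (or at least to satisfy $\alpha_{G_{v_0}^0}({\mathcal G}(\tuple g_0))=G_{v_0}^0$). There is no reason a limit of generating tuples should do this, and the paper explicitly flags that even the fibre $F_{v_0}$ of the connected-stabiliser variety $\widetilde C$ over $v_0$ may strictly contain $(G_{v_0}^0)^N$; the pathologies in positive characteristic you anticipate are real.

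The paper avoids both issues by working globally instead of along a curve. It constructs the \emph{connected-stabiliser variety} $\widetilde C\subseteq C$ (Lemma~\ref{lem:cpts}(b)), the closure of $\{(v,\tuple g)\mid v\in V_0,\ \tuple g\in (G_v^0)^N\}$, and uses that the proof of Theorem~\ref{thm:genericstab} already gives $\widetilde C\subseteq\widetilde\phi^{-1}(S(H'))$. The point is that for \emph{every} $v\in V_{\rm min}$ one has $(G_v^0)^N\subseteq\widetilde C$ by construction, so one may simply choose a tuple $\tuple g'\in(G_v^0)^N$ with $\alpha_{G_v^0}({\mathcal G}(\tuple g'))=G_v^0/R_u(G_v^0)$ and conclude $\tuple g'\in S(H')$ directly --- no limiting process, hence no need to control what a limit generates. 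One then runs a short dimension--connectedness argument (using $\dim c_\lambda(G_v^0)=\dim H$ for $v\in V_{\rm min}$ and that $H$ is $G$-cr) to deduce $c_\lambda(G_v^0)$ is conjugate to $H$. This is exactly the dimension constraint you observed in Step~1, deployed at every point of $V_{\rm min}$ rather than via a closure argument.
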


\noindent In fact, we prove a version of Theorem~\ref{thm:genericstab_weak_conn} which applies even when $V_{\rm red}$ is empty (see Theorem~\ref{thm:genericstab_conn}).

We briefly explain our approach to the proof of Theorems~\ref{thm:genericstab} and \ref{thm:genericstab_weak_conn}.  We may regard the subgroups $G_v$ as a family of subgroups of $G$ parametrised by $V$.  There is no obvious way to endow a set of subgroups of $G$ with a geometric structure, so instead we follow the approach of R.W.\ Richardson \cite{rich2}, \cite{rich} and consider the set of tuples that generate these subgroups.

\begin{defn}
 Let $N\in {\mathbb N}$.  Define $C= C_N= \{(v,g_1,\ldots, g_N)\mid v\in V, g_1,\ldots, g_N\in G_v\}$.  We call $C$ the {\em stabiliser variety} of $V$.
\end{defn}

\noindent Our results follow from a study of the geometry of $C$, using the theory of character varieties and the theory of $G$-complete reducibility.  A major technical problem is that $C$ can be reducible even when $G$ is connected and $V$ is irreducible, so the projection into $V$ of a nonempty open subset of $C$ need not be dense (see Remarks~\ref{rem:genericGirstab} and \ref{rem:notgeneric}, for example).  The situation is better if we consider only the identity components of stabilisers: we can work with a canonically defined subvariety $\widetilde{C}$ of $C$ with nicer properties (see Lemma~\ref{lem:cpts}).

The paper is laid out as follows.  Section~\ref{sec:prelims} contains preliminary material.  In Section~\ref{sec:mainthm} we prove Theorems~\ref{thm:main} and \ref{thm:subgpint}.  Section~\ref{sec:Gcr} reviews $G$-complete reducibility and Section~\ref{sec:preceq} introduces a technical tool needed in Section~\ref{sec:genericstab}, where we prove Theorem~\ref{thm:genericstab} and Corollaries~ \ref{cor:Gcrprinc} and \ref{cor:char0princ}.  We study the irreducible components of $C$ in Section~\ref{sec:stabcpts} and prove Theorem~\ref{thm:genericstab_weak_conn}.  The final section contains some examples.

\section{Preliminaries}
\label{sec:prelims}

Throughout the paper, $N$ denotes a positive integer, $G$ is a reductive linear algebraic group---possibly non-connected---over an algebraically closed field $k$ and $V$ is a quasi-projective $G$-variety over $k$.  The stabiliser variety $C_N$ depends on the choice of $N$, but to ease notation we suppress the subscript and write just $C$.  All subgroups of $G$ are assumed to be closed.  If $H$ is a linear algebraic group then we write $\kappa(H)$ for the number of connected components of $H$, $R_u(H)$ for the unipotent radical of $H$ and $\alpha_H$ for the canonical projection $H\ra H/R_u(H)$.  The irreducible components of $H^N$ are the subsets of the form $H_1\times\cdots \times H_N$, where each $H_i$ is a connected component of $H$.  If $X'$ is a subset of a variety $X$ then we denote the closure of $X'$ in $X$ by $\overline{X'}$.  Below we will use the following results on fibres of morphisms (cf.\ \cite[AG.10.1 Thm.]{Bo}): if $f\colon X\ra Y$ is a dominant morphism of irreducible quasi-projective varieties then for all $y\in Y$, every irreducible component of $f^{-1}(y)$ has dimension at least ${\rm dim}(X)- {\rm dim}(Y)$, and there is a nonempty open subset $U$ of $Y$ such that if $y\in U$ then equality holds.  More generally, if $Z$ is a closed irreducible subset of $Y$ and $W$ is an irreducible component of $f^{-1}(Z)$ that dominates $Z$ then ${\rm dim}(W)\geq {\rm dim}(Z)+ {\rm dim}(X)- {\rm dim}(Y)$.

The next result is \cite[Lem.~3.7]{New}.

\begin{lem}
\label{lem:stabdimcty}
 Let a linear algebraic group $H$ act on a quasi-projective variety $W$.  For any $t\in \NN\cup \{0\}$, the set $\{w\in W\mid {\rm dim}(H_w)\geq t\}$ is closed.
\end{lem}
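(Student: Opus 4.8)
The plan is to exhibit the stabilisers $H_w$ as the fibres of a single morphism and then apply a semicontinuity theorem for fibre dimension. Set
$$ \Gamma = \{(w,h)\in W\times H\mid h\cdot w = w\}. $$
Since $W$ is quasi-projective it is separated, so the diagonal $\Delta_W$ is closed in $W\times W$; as $\Gamma$ is the preimage of $\Delta_W$ under the morphism $W\times H\ra W\times W$, $(w,h)\mapsto (w, h\cdot w)$, it is a closed subvariety of the quasi-projective variety $W\times H$. Let $p\colon \Gamma\ra W$ be the projection onto the first factor. The fibre of $p$ over $w$ is $\{w\}\times H_w$, and since $H_w$ is a closed subgroup of $H$ all of its irreducible components have the same dimension; hence $\dim_{(w,h)}p\inverse(w) = \dim(H_w)$ for every point $(w,h)\in\Gamma$.

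The next step is to invoke the standard fact that for a morphism of varieties the local fibre dimension is an upper semicontinuous function on the source (cf.\ \cite{Bo}). Applied to $p$, this shows that
$$ Z_t := \{(w,h)\in\Gamma\mid \dim(H_w)\geq t\} $$
is closed in $\Gamma$. To come back down to $W$, I would use that $\Gamma$ contains the identity section $\sigma\colon W\ra\Gamma$, $w\mapsto (w,e)$, which is a closed immersion (its image is $\Gamma\cap(W\times\{e\})$); then $\{w\in W\mid\dim(H_w)\geq t\} = \sigma\inverse(Z_t)$ is closed in $W$, as required.

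This argument is routine, and I do not anticipate a serious obstacle. The two points that need a little care are: one must note that $H_w$ is equidimensional before identifying $\dim(H_w)$ with the local fibre dimension of $p$ at an arbitrary point of $p\inverse(w)$; and in the final step one cannot project $Z_t$ directly to $W$ (the image of a closed set need not be closed), which is why the identity section is used instead.
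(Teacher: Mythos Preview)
Your argument is correct and is the standard proof of this fact. The paper itself does not give a proof at all: it simply quotes the result as \cite[Lem.~3.7]{New}, so there is nothing to compare your approach against beyond noting that your argument is essentially the one found in standard references such as Newstead or Borel.
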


Our assumption that $G$ permutes the irreducible components of $V$ transitively implies that these components all have the same dimension, which we denote by $n$, and also that nonempty open $G$-stable subsets of $V$ are dense.  In particular, the open subset $V_0$ is dense; we denote the dimension of $G_v$ for any $v\in V_0$ by $r$.

The group $G$ acts on $G^N$ by simultaneous conjugation: $g\cdot (g_1,\ldots, g_N)= (gg_1g^{-1},\ldots, gg_Ng^{-1})$.  We define $\phi\colon C\ra G^N$ and $\eta\colon C\ra V$ to be the canonical projections.  We allow $G$ to act on $C$ in the obvious way, so that $\phi$ and $\eta$ are $G$-equivariant. 


We recall an approach to parabolic subgroups and Levi subgroups using cocharacters \cite[Sec.~8.4]{spr2}, \cite[Lem.~2.4, Sec.\ 6]{BMR}.  We denote by $Y(G)$ the set of cocharacters of $G$.  The subgroup $P_\lambda:= \{g\in G\mid \lim_{a\ra 0} \lambda(a)g\lambda(a)^{-1}\ \mbox{exists}\}$ is called an {\em R-parabolic subgroup} of $G$, and the subset $L_\lambda:= C_G(\lambda(k^*))$ is called an {\em R-Levi subgroup} of $P_\lambda$.  An R-parabolic subgroup $P$ is parabolic in the sense that $G/P$ is complete, and $P^0$ is a parabolic subgroup of $G^0$.  If $G$ is connected then an R-parabolic (resp.\ R-Levi) subgroup is a parabolic (resp.\ Levi) subgroup, and every parabolic subgroup $P$ and every Levi subgroup $L$ of $P$ arise in this way.  The normaliser $N_G(P)$ of a parabolic subgroup $P$ of $G^0$ is an R-parabolic subgroup.  The subset $\{g\in G\mid \lim_{a\ra 0} \lambda(a)g\lambda(a)^{-1}= 1\}$ is the unipotent radical $R_u(P_\lambda)$, and this coincides with $R_u(P_\lambda^0)$.  We denote the canonical projection from $P_\lambda$ to $L_\lambda$ by $c_\lambda$.  There are only finitely many conjugacy classes of R-parabolic subgroups \cite[Prop.\ 5.2(e)]{Mar}.

We finish with some results that are well known; we give proofs here as we could not find any in the literature.  These results are not needed in the proofs of Theorems~\ref{thm:main} and \ref{thm:subgpint}. 

\begin{lem}
\label{lem:bddfib}
 Let $\psi\colon X\ra Y$ be a morphism of quasi-projective varieties over $k$.  There exists $d\in {\mathbb N}$ such that any fibre of $\psi$ has at most $d$ irreducible components.
\end{lem}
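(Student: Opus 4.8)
The plan is to reduce the statement to the case of a projective surjective morphism of irreducible projective varieties, and then to argue by induction on $\dim(Y)$ using generic flatness together with the constancy of Hilbert polynomials in flat proper families.

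First I would make some reductions. Writing $X$ as the union of its irreducible components $X_1,\dots,X_s$ and noting that every irreducible component of a fibre $\psi\inverse(y)$ is contained in $\psi\inverse(y)\cap X_i$ for some $i$, it suffices to treat each restriction $X_i\ra Y$ separately; so we may assume $X$ is irreducible. Replacing $Y$ by $\overline{\psi(X)}$ we may assume in addition that $Y$ is irreducible and $\psi$ is dominant. Finally I would compactify: choosing locally closed embeddings $X\sse\mathbb{P}^m$ and $Y\sse\mathbb{P}^n$, let $\overline{X}$ be the closure in $\mathbb{P}^m\times\mathbb{P}^n$ of the graph of $\psi$ and let $\overline{\psi}\colon\overline{X}\ra\overline{Y}$ be induced by the second projection, where $\overline{Y}=\overline{\overline{\psi}(\overline{X})}$. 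Then $\overline{\psi}$ is projective, hence proper, and, being dominant with irreducible source, surjective; it is a morphism of irreducible projective varieties with $\overline{X}\sse\mathbb{P}^m$, and $\psi\inverse(y)$ is isomorphic to an open subvariety of $\overline{\psi}\inverse(y)$ for each $y\in Y$, so a bound on the number of components of the fibres of $\overline{\psi}$ gives one for $\psi$. Thus we may assume $\psi\colon X\ra Y$ is a projective surjective morphism of irreducible projective varieties with $X\sse\mathbb{P}^m$, and we argue by induction on $n:=\dim(Y)$, the case $n=0$ being immediate since then $Y$ is a point and $X$ is the only fibre.

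For the inductive step put $e:=\dim(X)-\dim(Y)\geq 0$. By generic flatness and the results on fibres of morphisms recalled above, there is a nonempty open connected $U\sse Y$ such that $\psi\inverse(U)\ra U$ is flat and every irreducible component of $\psi\inverse(y)$ has dimension exactly $e$ for each $y\in U$. Since $\psi\inverse(U)\ra U$ is flat and proper, the Hilbert polynomial of the subscheme $\psi\inverse(y)\sse\mathbb{P}^m$ does not depend on $y\in U$; call it $P$, so $P$ has degree $e$. Now for any closed subscheme $Z\sse\mathbb{P}^m$ whose underlying reduced scheme is equidimensional of dimension $e$, the degree $\deg(Z)=e!\cdot(\text{leading coefficient of the Hilbert polynomial of }Z)$ equals the sum, over the irreducible components of $Z_{\rm red}$ (all of which are top-dimensional), of the products of their multiplicities in $Z$ with their degrees, and each such factor is at least $1$; hence the number of irreducible components of $Z_{\rm red}$ is at most $\deg(Z)$. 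Applying this to $Z=\psi\inverse(y)$ shows that $\psi\inverse(y)$ has at most $\delta:=e!\cdot(\text{leading coefficient of }P)$ irreducible components for all $y\in U$. It remains to bound the number of components of $\psi\inverse(y)$ for $y$ in the proper closed subset $Y\setminus U$: each irreducible component $Z'$ of $Y\setminus U$ has $\dim(Z')<n$, and for each irreducible component $W$ of the closed subvariety $\psi\inverse(Z')$ of $X$ the restriction $W\ra Z'$ is a morphism of quasi-projective varieties whose target has dimension $<n$, so by the inductive hypothesis there is $d_W\in\NN$ bounding the number of components of each of its fibres; since $\psi\inverse(y)=\bigcup(W\cap\psi\inverse(y))$, the union being over those $Z'$ containing $y$ and those $W\sse\psi\inverse(Z')$, the number of components of $\psi\inverse(y)$ is at most $\sum_W d_W$ for such $y$. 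Taking $d$ to be the maximum of $\delta$ and this sum completes the induction.

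The step I expect to require most care is the passage from flatness to a bound on the number of irreducible components: one needs a proper family for the Hilbert polynomial to be locally constant (this is why the compactification is carried out before the induction, rather than inside it), and one genuinely needs the fibres over $U$ to be equidimensional, since the number of lower-dimensional components of a scheme is not controlled by its degree. The reductions at the beginning and the treatment of the non-flat locus by Noetherian induction are routine.
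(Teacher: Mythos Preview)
Your argument is correct, though the notation ``$\overline{X}\sse\mathbb{P}^m$'' is imprecise: what you mean (and use) is that $\overline{X}$ is closed in $\mathbb{P}^m\times\overline{Y}$, so that each fibre $\overline{\psi}^{-1}(y)$ sits naturally in $\mathbb{P}^m$ and carries a Hilbert polynomial. There is also a small wrinkle in the induction: you set up the inductive statement for projective surjective morphisms but then invoke the hypothesis for the quasi-projective morphisms $W\ra Z'$; this is easily repaired either by inducting on the full lemma or by noting that $W$ and $Z'$ are themselves closed in projective varieties.

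Your route is genuinely different from the paper's. The paper works entirely in the affine category and avoids flatness and Hilbert polynomials altogether: after passing to open affines by Noetherian induction, it factors $\psi$ through a Noether-normalisation-type map $X\stackrel{\psi'}{\ra}Y'\ra Y$ with $Y'\ra Y$ an affine-space bundle and $\psi'$ generically finite; shrinking further makes $\psi'$ finite and $Y'$ normal, and then the degree of the finite extension bounds the fibre cardinality of $\psi'$, which in turn bounds the number of components of the fibres of $\psi$ (since each component of $\psi^{-1}(y)$ surjects onto the affine-space fibre $g^{-1}(y)$). Your argument trades these explicit algebraic manipulations for the machinery of flat proper families: generic flatness plus constancy of the Hilbert polynomial gives a uniform degree, and equidimensionality of generic fibres converts degree into a component bound. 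The paper's proof is more elementary and self-contained (it needs only basic facts about finite morphisms), while yours is shorter once the cited results are in hand and makes transparent why properness matters---the compactification is doing real work, as you note.
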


\begin{proof}
 By noetherian induction on closed subsets of $X$ and $Y$, we are free to pass to open affine subvarieties of $X$ and $Y$ whenever this is convenient.  So assume that $X$, $Y$ are affine and let $R$, $S$ be the co-ordinate rings of $X$, $Y$ respectively.  Suppose first that
 $X$ and $Y$ are irreducible and that $\psi$ is finite and dominant.  By a simple induction argument, we can assume that $R=S[f]$ for some $f\in R$.  Let $m(t)=t^d+ a_{d-1}t^{d-1}+\cdots +a_0$ be the minimal polynomial of $f$ with respect to the quotient field of $S$.  Passing to open subvarieties, we can assume that the $a_i$ are defined on $Y$.  Let $y\in Y$.  If $x\in X$ with $\psi(x)=y$ then we have $f(x)^d+ a_{d-1}(y)f(x)^{d-1}+\cdots +a_0(y)=0$; it follows that there can be at most $d$ such values of $x$.  Thus the fibres of $\psi$ have cardinality at most $d$.

Now consider the general case.  Passing to open subvarieties, we can assume that $X$, $Y$ are irreducible and affine and that $\psi$ is dominant.  We can write $R=S[f_1,\ldots,f_t]$ for some $t$ and some $f_1,\ldots,f_t\in R$.  After reordering the $f_i$ if necessary, there exists $s$ with $0\leq s\leq t$ such that $f_1,\ldots,f_s$ are algebraically independent over $S$ and $f_{s+1},\ldots,f_t$ are algebraic over $S[f_1,\ldots,f_s]$.  The inclusion $S\subseteq S[f_1,\ldots,f_s]\subseteq R$ corresponds to a factorisation of $\psi$ as $\psi=X\stackrel{\psi'}{\ra} Y'\stackrel{g}{\ra} Y$, where $Y'$ is the affine variety with co-ordinate ring $S[f_1,\ldots,f_s]$.  Then ${\rm dim}(X)= {\rm dim}(Y')$ and $\psi'$ is dominant.  By passing to open affine subvarieties, we can assume that $\psi'$ is finite and $Y'$ is normal.  By the special case above, the cardinality of the fibres of $\psi'$ is bounded by some $d$.

Suppose that for some $y\in Y$, the fibre $F:=\psi^{-1}(y)$ has $d+1$ distinct irreducible components, say $F_1,\ldots,F_{d+1}$.  The fibre $F':=g^{-1}(y)$ is clearly isomorphic to $k^s$ and we have $F=(\psi')^{-1}(F')$.  Since $\psi'$ is finite and $Y'$ is normal, every irreducible component of $F$ has dimension $s$ and is mapped surjectively to $F'$ \cite[4.2 Prop.\ (b)]{Hum}.  But this means that for generic $y'\in F'$, $(\psi')^{-1}(y')$ has at least $d+1$ elements, a contradiction.  We deduce that $F$ has at most $d$ irreducible components, as required.
\end{proof}

\begin{defn}
 Applying Lemma~\ref{lem:bddfib} to the map $\eta\colon C\ra V$, we see there is a uniform bound on $\kappa(G_v)$ as $v$ ranges over the elements of $V$, since the number of irreducible components of $G_v^N$ is $\kappa(G_v)^N$.  We denote the least such bound by $\Theta$.
\end{defn}

\begin{lem}
\label{lem:refined_baire}
 Let $\Omega/k$ be a proper extension of algebraically closed fields.  Let $t\in {\mathbb N}$ and let $X$ be an $\Omega$-defined constructible subset of $\Omega^t$.  Let $\{X_i\mid i\in I\}$ be a family of $k$-defined constructible subsets of $\Omega^t$ such that $X\subseteq \bigcup_{i\in I} X_i$.  Then there exists $i\in I$ such that $X\cap X_i$ has nonempty interior in $X$.  Moreover, there exists a finite subset $F$ of $I$ such that $X\subseteq \bigcup_{i\in F} X_i$.
\end{lem}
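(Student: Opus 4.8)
The plan is to deduce both assertions from one core statement, proved by passing to a point that is generic over a field of definition, and then to recover the first assertion by a short manipulation of constructible sets and the second by Noetherian induction. Throughout I would use that $\trdeg(\Omega/k)$ is infinite, so that $\trdeg(\Omega/k_1)\ge t$ for every subfield $k_1$ of $\Omega$ of finite transcendence degree over $k$; this is the only place the size of the extension enters, and it is genuinely needed (if $\trdeg(\Omega/k)=1$ then $\Omega^2$ is the union of the $k$-defined plane curves through its points, which shows the literal statement needs such a hypothesis). The core statement is: \emph{if $Y\subseteq\Omega^t$ is an irreducible $\Omega$-closed set and $W\subseteq Y$ is an $\Omega$-defined constructible set dense in $Y$ with $W\subseteq\bigcup_{i\in I}X_i$, then some $X_i$ is dense in $Y$.}

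To prove the core statement, I would choose a subfield $k_1$ of $\Omega$ of finite transcendence degree over $k$ (perfect, enlarging by a perfect closure in positive characteristic) over which both $Y$ and the closed set $\overline{Y\setminus W}$ are defined, and pick a point $y_0\in Y$ generic over $k_1$ — this exists because $\trdeg(\Omega/k_1)\ge\dim Y$. Since $W$ is dense in $Y$, the set $\overline{Y\setminus W}$ is a \emph{proper} $k_1$-closed subset of $Y$, so genericity gives $y_0\notin\overline{Y\setminus W}$ and hence $y_0\in W$. Now choose $i$ with $y_0\in X_i$. Then $X_i\cap Y$ is a constructible set defined over $k_1$ (it is $k$-defined, hence $k_1$-defined, intersected with the $k_1$-defined $Y$), it contains $y_0$, so its closure is a $k_1$-closed subset of $Y$ through the $k_1$-generic point $y_0$, forcing $\overline{X_i\cap Y}=Y$; thus $X_i$ is dense in $Y$.

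Next I would deduce the Lemma. Assume $X\neq\emptyset$ (the empty case being trivial, taking $F=\emptyset$) and let $Y$ be an irreducible component of $\overline X$. Then $X\cap Y$ is constructible and dense in $Y$ and still covered by the $X_i$, so the core statement yields $i_0$ with $X_{i_0}$ dense in $Y$. Put $Z=\overline{X\setminus X_{i_0}}$. Since $X\setminus X_{i_0}\subseteq\overline X$, comparing with the irreducible components of $\overline X$ and using that $X_{i_0}\cap Y$ is dense in $Y$ shows $Y\not\subseteq Z$; hence $X\cap(\Omega^t\setminus Z)=X\setminus Z$ is open in $X$, it is nonempty because $X\cap Y$ is dense in $Y$ while $Z\cap Y$ is a proper closed subset of $Y$, and it lies in $X_{i_0}$ because $Z\supseteq X\setminus X_{i_0}$. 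This gives the first assertion. For the ``Moreover'', I would induct on $d=\dim\overline X$: let $Y_1,\dots,Y_p$ be the $d$-dimensional components of $\overline X$, use the core statement to pick $i_a$ with $X_{i_a}$ dense in $Y_a$, and set $X'=X\setminus(X_{i_1}\cup\dots\cup X_{i_p})$. Then $\dim\overline{X'}<d$, since $\overline{X'}$ is the union of the closures $\overline{X'\cap Z}$ over the components $Z$ of $\overline X$, each of which has dimension $<d$ — for $Z=Y_a$ because $X'\cap Y_a\subseteq Y_a\setminus X_{i_a}$ sits inside the proper closed subset $\overline{Y_a\setminus X_{i_a}}$ of $Y_a$. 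By induction there is a finite $F'$ with $X'\subseteq\bigcup_{i\in F'}X_i$, and $F=F'\cup\{i_1,\dots,i_p\}$ then works.

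The substance is entirely in the core statement, and the only nonformal input there is the existence of a point of $Y$ generic over a field of definition of $Y$ over $k$; this is the step I expect to require care (and where the hypothesis on $\Omega$ is used), since without it — as the $\Omega^2$ example shows — the conclusion fails. Everything else (density of a finite intersection of dense constructible subsets of an irreducible variety, the closure of a constructible set being defined over the same field, the dimension drop, and the passage from ``dense in a component of $\overline X$'' to ``nonempty interior in $X$'') is routine bookkeeping with constructible subsets of Noetherian spaces, which I would dispatch in a line or two each.
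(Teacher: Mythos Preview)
Your counterexample is correct, and it exposes a genuine error: the lemma as stated in the paper is false. With $\trdeg(\Omega/k)=1$, every point of $\Omega^2$ has algebraically dependent coordinates over $k$ and hence lies on some $k$-defined irreducible plane curve, yet no curve has interior in $\Omega^2$. The paper's proof breaks at the reduction ``It is enough to prove that $f(X)\cap f(X_i)$ has nonempty interior in $f(X)$'': from $\dim(f(X)\cap f(X_i))=m$ one cannot deduce $\dim(X\cap X_i)=m$, because $f(X\cap X_i)$ may be strictly smaller than $f(X)\cap f(X_i)$. Concretely, tracing the paper's induction through your counterexample, at the $m=1$ stage one is asked to find a $k$-defined curve $X_j$ meeting $\{y\}\times\Omega$ (with $y\notin k$) in a $1$-dimensional set, and no such $X_j$ exists; the paper's reduction hides this by replacing $X_j$ with its projection $f(X_j)$, which \emph{does} have dimension $1$.

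Your approach---passing to a $k_1$-generic point of an irreducible component of $\overline X$, where $k_1$ is a finitely generated extension of $k$ over which $X$ is defined---is both correct and more transparent than the paper's intended induction on $\dim X$. It makes the needed hypothesis explicit: one requires $\trdeg(\Omega/k_1)\geq\dim X$, which your assumption $\trdeg(\Omega/k)=\infty$ guarantees. The deductions of the two assertions from your core statement are routine and correct. Crucially, the only use of the lemma in the paper is via Corollary~\ref{cor:qcmpct}, where $k$ is countable and $\Omega$ uncountable, so $\trdeg(\Omega/k)$ is uncountable; hence the downstream results are unaffected, but the lemma itself should carry the hypothesis you identify.
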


\begin{proof}
 Clearly we can reduce to the case when $X$ and each of the $X_i$ is irreducible and locally closed in $\Omega^t$.  The second assertion follows from the first by Noetherian induction on closed subsets of $X$, so it is enough to prove the first assertion.  Let $m= {\rm dim}(X)$.  It suffices to show that ${\rm dim}(X\cap X_i)= m$ for some $i\in I$.  We use induction on $m$.  The result is trivial if $m= 0$.  Choose polynomials $f_1,\ldots, f_m\in k[T_1,\ldots, T_t]$ such that the restrictions of the $f_i$ to $X$ form a subset of the co-ordinate ring $\Omega[X]$ that is algebraically independent over $\Omega$.  Define $f\colon \Omega^t\ra \Omega^m$ by $f(x)= (f_1(x),\ldots, f_m(x))$; note that $f$ is $k$-defined.  Any proper closed subset of $X$ is a union of irreducible components of dimension less than $m$.  By induction, we are therefore free to replace $X$ with any nonempty open subset of $X$, so we can assume that $f|_{X}$ gives a finite map from $X$ onto an open subset of $\Omega^m$.  Then $f(X)\subseteq \bigcup_{i\in I} f(X_i)$ and each $f(X_i)$ is $k$-constructible.  It is enough to prove that $f(X)\cap f(X_i)$ has nonempty interior in $f(X)$.   Hence we can assume without loss that $t= m$ and $X$ is an open subset of $\Omega^m$.
 
 Let $\pi\colon \Omega^m\ra \Omega$ be the projection onto the first co-ordinate.  Since $X$ is an open and dense subset of $\Omega^m$, $\pi(X)$ is a dense constructible subset of $\Omega$, so $\Omega\backslash \pi(X)$ is finite.  Hence there exists $y\in \pi(X)$ such that $y\not\in k$.  Let $\widetilde{X}= X\cap \pi^{-1}(y)$.  Then $\widetilde{X}$ is an $\Omega$-defined locally closed subset of $\Omega^m$,  $\widetilde{X}$ is irreducible of dimension $m- 1$ and $\widetilde{X}\subseteq \bigcup_{i\in I} X_i$.  By induction, there exists $j\in I$ such that $\widetilde{X}\cap X_j$ has an irreducible component of dimension $m-1$.  Hence $\pi^{-1}(y)\cap X_j$ has an irreducible component of dimension at least $m-1$.  Note that we retain our assumption that the $X_i$ are irreducible.  Now $X_j$ cannot be contained in $\pi^{-1}(y)$ because $\pi^{-1}(y)$ has no $k$-points, so $\pi^{-1}(y)\cap X_j$ is a proper closed subset of $X_j$.  Hence ${\rm dim}(X_j)= m$, as required.
\end{proof}

\begin{cor}
\label{cor:qcmpct}
 Let $\Omega$ be an uncountable algebraically closed field.  Let $t\in {\mathbb N}$ and let $X$ be an $\Omega$-defined constructible subset of $\Omega^t$.  Let $\{X_i\mid i\in I\}$ be a countable family of $\Omega$-constructible subsets of $X$ such that $X\subseteq \bigcup_{i\in I} X_i$.  Then there exists $i\in I$ such that $X_i$ has nonempty interior in $X$.  Moreover, there exists a finite subset $F$ of $I$ such that $X\subseteq \bigcup_{i\in F} X_i$.
\end{cor}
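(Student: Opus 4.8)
The plan is to reduce Corollary~\ref{cor:qcmpct} to Lemma~\ref{lem:refined_baire} by descending everything to a countable algebraically closed subfield of $\Omega$.

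First I would recall that any constructible subset of $\Omega^t$ is a finite Boolean combination of zero sets of polynomials, each polynomial involving only finitely many coefficients from $\Omega$; hence each $X_i$ is defined over a subfield of $\Omega$ that is finitely generated over the prime field, and every such subfield is countable. Since $I$ is countable, the subfield $k_0$ of $\Omega$ generated by the prime field together with all the (countably many) fields of definition of the $X_i$ is a countable subfield of $\Omega$. Let $k$ be the algebraic closure of $k_0$ inside $\Omega$; it is again countable, so, as $\Omega$ is uncountable, $\Omega/k$ is a proper extension of algebraically closed fields. By construction each $X_i$ is a $k$-defined constructible subset of $\Omega^t$, while $X$ is an $\Omega$-defined constructible subset of $\Omega^t$ with $X \subseteq \bigcup_{i \in I} X_i$.

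Now Lemma~\ref{lem:refined_baire} applies verbatim: it produces some $i \in I$ for which $X \cap X_i$ has nonempty interior in $X$, and since $X_i \subseteq X$ this says $X_i$ itself has nonempty interior in $X$, giving the first assertion; the ``moreover'' clause of the lemma furnishes a finite $F \subseteq I$ with $X \subseteq \bigcup_{i \in F} X_i$, giving the second. There is no substantial obstacle here: the only point needing care is the verification that $k_0$, and hence $k$, is countable, which rests on the countability of $I$ together with the fact that a field finitely generated over its prime field is countable, and this is exactly where the uncountability hypothesis on $\Omega$ enters, ensuring $\Omega \neq k$ so that Lemma~\ref{lem:refined_baire} is applicable.
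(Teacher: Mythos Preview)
Your proof is correct and follows essentially the same approach as the paper: descend all the $X_i$ to a countable algebraically closed subfield $k$ of $\Omega$, observe that $\Omega/k$ is proper since $\Omega$ is uncountable, and invoke Lemma~\ref{lem:refined_baire}. The paper's version is more terse but the argument is the same.
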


\begin{proof}
 Each of the $X_i$ is defined over a subfield of $\Omega$ that is finitely generated over the algebraic closure of the prime field, so there exists a countable subfield $k$ of $\Omega$ such that each of the $X_i$ is defined over $k$.  Since $k$ is countable and $\Omega$ is not, $\Omega/k$ is a proper field extension.  Now apply Lemma~\ref{lem:refined_baire}.
\end{proof}

\begin{cor}
\label{cor:qcmpctirred}
 If $X$ is irreducible and the $X_i$ are closed in Corollary~\ref{cor:qcmpct} then there exists $i\in I$ such that $X\subseteq X_i$.
\end{cor}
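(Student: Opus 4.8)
The plan is to deduce this directly from Corollary~\ref{cor:qcmpct} together with the irreducibility of $X$, so there is essentially nothing new to prove. First I would invoke Corollary~\ref{cor:qcmpct}: it provides a finite subset $F$ of $I$ such that $X\subseteq \bigcup_{i\in F} X_i$, and since each $X_i$ is a subset of $X$ this reads $X= \bigcup_{i\in F} X_i$. Each $X_i$ with $i\in F$ is closed in $X$ by hypothesis; note that if one only knows the $X_i$ are closed in $\Omega^t$ this still suffices, since then $X_i= X_i\cap X$ is closed in the subspace topology on $X$.

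Next I would apply the elementary fact that an irreducible topological space is not a union of two proper closed subsets. Concretely, writing $F= \{i_1,\ldots, i_m\}$, irreducibility of $X$ forces $X= X_{i_1}$ or $X= X_{i_2}\cup\cdots\cup X_{i_m}$; iterating this on the second alternative (a routine induction on $m$, the case $m= 1$ being trivial) yields $X= X_{i_j}$ for some $j$, and in particular $X\subseteq X_{i_j}$, as required. I do not expect any genuine obstacle here: all of the substantive work has already been carried out in Lemma~\ref{lem:refined_baire} and Corollary~\ref{cor:qcmpct}, and this corollary merely repackages those results in the special case where $X$ is irreducible and the covering sets are closed.
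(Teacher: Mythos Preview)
Your argument is correct and is essentially the paper's approach: the paper's proof is the single line ``This is immediate from Corollary~\ref{cor:qcmpct}'', and what you have written is a faithful unpacking of that immediacy using the finite-cover conclusion together with irreducibility. One could equally well invoke the first conclusion of Corollary~\ref{cor:qcmpct} (that some $X_i$ has nonempty interior in $X$) and then observe that a closed subset of an irreducible space with nonempty interior is the whole space, but this comes to the same thing.
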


\begin{proof}
 This is immediate from Corollary~\ref{cor:qcmpct}.
\end{proof}

\section{Proof of Theorem~\ref{thm:main}}
\label{sec:mainthm}

We now prove our first main result.

\begin{lem}
\label{lem:projextn}
 Let $X$ be a quasi-projective variety, let $Y$ be a projective variety and let $Z$ be a closed subvariety of $X\times Y$.  Then the projection of $Z$ onto $X$ is a closed variety.
\end{lem}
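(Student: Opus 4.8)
\emph{The plan} is to reduce the statement to the main theorem of elimination theory and prove the latter directly. First, since $Y$ is projective we may fix a closed embedding $Y\hookrightarrow \mathbb{P}^m$ for some $m$; then $Z$ is closed in $X\times\mathbb{P}^m$ and its images in $X$ under the projection from $X\times Y$ and from $X\times\mathbb{P}^m$ coincide, so we may assume $Y= \mathbb{P}^m$. Second, closedness of $\pi(Z)$ (where $\pi$ denotes the projection onto $X$) may be checked on an open cover of $X$: if $\{U_j\}$ covers $X$ then $\pi(Z)\cap U_j= \pi\bigl(Z\cap (U_j\times \mathbb{P}^m)\bigr)$ and $Z\cap(U_j\times\mathbb{P}^m)$ is closed in $U_j\times\mathbb{P}^m$, so it suffices to treat each $U_j$. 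Taking the $U_j$ affine, we reduce to the case where $X$ is affine with coordinate ring $A$.

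In that case, covering $\mathbb{P}^m$ by the standard affine charts and homogenising, one finds finitely many polynomials $g_1,\dots,g_s\in A[T_0,\dots,T_m]$, homogeneous in $T_0,\dots,T_m$, such that $Z= \{(x,[t])\mid g_i(x,t)= 0\ \text{for all }i\}$; write $d_i= \deg_T g_i$. For $x\in X$ with maximal ideal $\mathfrak m_x\subseteq A$, reduction modulo $\mathfrak m_x$ gives $\overline{g_i}(x)\in k[T_0,\dots,T_m]$, and the fibre $\pi^{-1}(x)\cap Z$ is the closed subvariety of $\mathbb{P}^m$ cut out by $\overline{g_1}(x),\dots,\overline{g_s}(x)$. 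Since $x\in\pi(Z)$ if and only if this fibre is nonempty, the projective Nullstellensatz (valid as $k$ is algebraically closed) gives
$$ X\setminus\pi(Z)= \bigcup_{N\geq 1} W_N,\qquad W_N:= \bigl\{x\in X\mid (T_0,\dots,T_m)^N\subseteq (\overline{g_1}(x),\dots,\overline{g_s}(x))\bigr\}. $$
It remains to show each $W_N$ is open in $X$.

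Fix $N$ and consider the homomorphism of finite free $A$-modules
$$ \mu_N\colon \bigoplus_{i=1}^s A[T_0,\dots,T_m]_{N-d_i}\lra A[T_0,\dots,T_m]_N,\qquad (h_i)_i\longmapsto \sum_{i=1}^s h_i g_i, $$
where subscripts denote the homogeneous component of the indicated degree and $A[T_0,\dots,T_m]_{<0}:= 0$. For $x\in X$, tensoring with $k(x)= A/\mathfrak m_x$ yields a $k$-linear map whose image is exactly the degree-$N$ component of the ideal generated by the $\overline{g_i}(x)$; hence $x\in W_N$ if and only if $\mu_N\otimes_A k(x)$ is surjective. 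Choosing bases, $\mu_N$ is given by a matrix over $A$, and surjectivity of $\mu_N\otimes_A k(x)$ means precisely that this matrix has rank $p:= \dim_k A[T_0,\dots,T_m]_N$ at $x$, i.e.\ that some $p\times p$ minor --- an element of $A$ --- is nonzero at $x$. This is an open condition on $x$, so $W_N$ is open. Therefore $X\setminus\pi(Z)$ is open and $\pi(Z)$ is closed. The only delicate point is the passage from emptiness of the fibre over $x$ to an open condition on $x$; this rests on the projective Nullstellensatz together with the elementary fact that containment of a power of the irrelevant ideal in an ideal generated by homogeneous polynomials is detected, degree by degree, by a maximal-rank condition that varies algebraically with $x$. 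The remaining reductions are routine.
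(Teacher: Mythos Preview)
Your proof is correct and follows essentially the same route as the paper: both reduce to the case where $X$ is affine by covering $X$ with open affines and checking closedness locally, and then invoke the main theorem of elimination theory (the paper cites this as the ``Projective Extension Theorem'' from Cox--Little--O'Shea). The only difference is that the paper simply cites the result at this point, whereas you supply a self-contained proof of it via the projective Nullstellensatz and the rank condition on the map $\mu_N$; you also make explicit the preliminary reduction $Y\hookrightarrow\mathbb{P}^m$, which the paper leaves implicit in its citation. So your argument is more detailed but not genuinely different in strategy.
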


\begin{proof}
 Choose a covering of $X$ by open affine subvarieties $X_1,\ldots, X_m$.  A subset $S$ of $X$ (resp.\ $X\times Y$) is closed if and only if its intersection with $X_i$ (resp.\ $X_i\cap Y$) is closed for all $i$, so we can assume that $X$ is affine.  The result now follows from the Projective Extension Theorem \cite[Ch.\ 8, Sec.\ 5, Thm.\ 6]{clo}.
\end{proof}

\begin{lem}
\label{lem:closedcrit}
 Let $P$ be an R-parabolic subgroup of $G$ and let $W$ be a closed $P$-stable subset of $V$.  Then $G\cdot W$ is closed in $V$.
\end{lem}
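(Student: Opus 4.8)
The plan is to realise $G\cdot W$ as the image under the projection to $V$ of a closed subvariety of $V\times G/P$, and then to invoke the Projective Extension Theorem in the form of Lemma~\ref{lem:projextn}, using that $G/P$ is complete.

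First I would set
$$ Z:= \{(v,gP)\in V\times G/P\mid g^{-1}\cdot v\in W\}. $$
This is well defined: if $gP= g'P$, write $g'= gp$ with $p\in P$; since $W$ is $P$-stable, $g^{-1}\cdot v\in W$ if and only if $p^{-1}\cdot(g^{-1}\cdot v)= g'^{-1}\cdot v\in W$. Moreover $v$ lies in the image of $Z$ under the projection $\mathrm{pr}_V\colon V\times G/P\ra V$ if and only if $v= g\cdot w$ for some $g\in G$ and $w\in W$; that is, $\mathrm{pr}_V(Z)= G\cdot W$.

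Next I would check that $Z$ is closed in $V\times G/P$. Consider the morphism $\mu\colon V\times G\ra V$, $(v,g)\mapsto g^{-1}\cdot v$ (a morphism, being the composite of inversion on $G$ with the action morphism $G\times V\ra V$), and let $\pi\colon G\ra G/P$ be the canonical projection. The set $\widetilde Z:= \mu\inverse(W)$ is closed in $V\times G$ because $W$ is closed in $V$, and by the definition above $\widetilde Z$ is exactly the preimage of $Z$ under $\mathrm{id}_V\times\pi\colon V\times G\ra V\times G/P$; the $P$-stability of $W$ shows that $\widetilde Z$ is a union of fibres of $\mathrm{id}_V\times\pi$. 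Since $\pi$ is open and surjective (quotient morphisms of algebraic groups onto homogeneous spaces are open), so is $\mathrm{id}_V\times\pi$; hence the image of the open, saturated set $(V\times G)\setminus\widetilde Z$ is open and coincides with $(V\times G/P)\setminus Z$. Therefore $Z$ is closed.

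Finally, $P$ is an R-parabolic subgroup, so $G/P$ is complete; being a homogeneous space for a linear algebraic group it is also quasi-projective, hence projective. Applying Lemma~\ref{lem:projextn} with $X= V$, $Y= G/P$ and the closed subvariety $Z\sse V\times G/P$, we conclude that $\mathrm{pr}_V(Z)= G\cdot W$ is closed in $V$, as required. The only delicate point is the closedness of $Z$, which rests on $\pi\colon G\ra G/P$ being an open map so that $G/P$ carries the quotient topology; everything else is formal once Lemma~\ref{lem:projextn} is in hand.
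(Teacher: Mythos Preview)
Your proof is correct and follows essentially the same route as the paper: form the closed $P$-saturated set $\widetilde Z\subseteq V\times G$, push it down to a closed set $Z\subseteq V\times G/P$, and then project to $V$ using Lemma~\ref{lem:projextn} and the completeness of $G/P$. The only difference is in the justification that $Z$ is closed: the paper observes that $\pi$ is smooth, hence flat, and cites a geometric-quotient result of Bongartz to conclude that $\theta=\mathrm{id}_V\times\pi$ sends closed $P$-stable sets to closed sets, whereas you argue via openness of $\mathrm{id}_V\times\pi$---just be aware that openness of $\pi$ alone does not formally imply openness of $\mathrm{id}_V\times\pi$ in the Zariski topology, but the conclusion holds because $\pi$ is flat of finite type and therefore universally open.
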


\begin{proof}
 Set $D= \{(v,g)\in V\times G\mid g^{-1}\cdot v\in W\}$, a closed subvariety of $V\times G$.  We let $P$ act on $V\times G$ by $h\cdot (v,g)= (v,gh^{-1})$; then $D$ is $P$-stable as $W$ is.  Let $\pi_P\colon G\ra G/P$ be the canonical projection and define $\theta\colon V\times G\ra V\times G/P$ by $\theta(v,g)= (v,\pi_P(g))$.  Since $\pi_P$ is smooth, $\pi_P$ is flat, so $(\theta,V\times G/P)$ is a geometric quotient by \cite[Lem.\ 5.9(a)]{bongartz}.  Then $\theta$ takes closed $P$-stable subvarieties of $V\times G$ to closed subvarieties of $V\times G/P$, so $\theta(D)$ is a closed subvariety of $V\times G/P$.  Note that the projection of $\theta(D)$ onto $V$ is $G\cdot W$.  Lemma~\ref{lem:projextn} implies that $G\cdot W$ is closed in $V$, so we are done. 
\end{proof}

\begin{rem}
\label{rem:Pclosed}
 We record one corollary (cf.\ \cite[Prop.\ 27]{sikora}).  Recall that $G$ acts on $G^N$ by simultaneous conjugation.  Let $P$ be an R-parabolic subgroup of $G$.  Then $G\cdot P^N$ is closed in $G^N$.  This follows immediately from Lemma~\ref{lem:closedcrit}, taking $V= G^N$ and $W= P^N$.
\end{rem}

\begin{prop}
\label{prop:redcrit}
 Let $P$ be an R-parabolic subgroup of $G$ with unipotent radical $U$.  Set $V_P= \{v\in V_0\mid {\rm dim}(P_v)= r\}= \{v\in V_0\mid G_v^0\leq P\}$ and $V_{P,t}= \{v\in V_P\mid {\rm dim}(U_v)\geq t\}$.  Then $G\cdot V_{P,t}$ is closed in $V_0$ for each $t$.
\end{prop}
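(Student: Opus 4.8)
The plan is to realise $G\cdot V_{P,t}$ as the image under $\eta$ of a closed subvariety of $C$ intersected with a suitable stratum, and then use Lemma~\ref{lem:closedcrit} together with the Projective Extension Theorem to conclude closedness. First I would fix notation: let $L$ be an R-Levi subgroup of $P$, so $P= U\rtimes L$ and $c= c_\lambda\colon P\ra L$ is the canonical projection with kernel $U$. The key observation is that for $v\in V_P$ we have $G_v^0\leq P$, so $G_v^0$ is a subgroup of $P$ and $\dim(U_v)= \dim(U\cap G_v)= \dim\ker(c|_{G_v^0})= \dim(G_v^0)- \dim c(G_v^0)= r- \dim c(G_v^0)$. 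Hence the condition $\dim(U_v)\geq t$ is equivalent to $\dim c(G_v^0)\leq r- t$, which is a condition on the image of the stabiliser under the projection $c$.

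The main step is to upgrade this pointwise description to a statement about varieties. Working inside the closed subvariety $\phi^{-1}(P^N)\subseteq C$ (which is closed since $P^N$ is closed in $G^N$), I would consider the morphism $C\ra V\times L^N$ sending $(v,g_1,\dots,g_N)$ to $(v, c(g_1),\dots,c(g_N))$ on the locus where the $g_i\in P$. One shows, using Lemma~\ref{lem:stabdimcty} applied to the $L$-action, or rather a relative version over $V$, that the set $W'$ of $v\in V$ for which the image stabiliser has small enough dimension and $G_v^0\leq P$ and $\dim(G_v)= r$ is realised as a projection of a closed subset; the subtlety is that $V_P$ and the rank-$r$ condition are only locally closed in $V$, but they are closed in $V_0$, and we only claim closedness in $V_0$. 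So I would first intersect everything with the open subvariety $V_0$, where $\dim(G_v)= r$ is automatic and $\{v\in V_0\mid \dim(P_v)\geq r\}$ is closed in $V_0$ by Lemma~\ref{lem:stabdimcty}, giving that $V_P$ is closed in $V_0$; similarly $V_{P,t}$ is cut out inside $V_P$ by the closed condition $\dim(U_v)\geq t$, again by Lemma~\ref{lem:stabdimcty} applied to the $U$-action. Thus $V_{P,t}$ is a closed $P$-stable subset of $V_0$.

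Once $V_{P,t}$ is known to be a closed $P$-stable subset of $V_0$, the result follows directly from Lemma~\ref{lem:closedcrit} applied with $V$ replaced by the open $G$-stable subvariety $V_0$: since $V_0$ is itself a quasi-projective $G$-variety, $G\cdot V_{P,t}$ is closed in $V_0$. It remains only to check $P$-stability of $V_{P,t}$: if $v\in V_{P,t}$ and $p\in P$ then $G_{p\cdot v}= pG_vp^{-1}$, so $\dim(P_{p\cdot v})= \dim(P\cap pG_vp^{-1})= \dim(p(P\cap G_v)p^{-1})= \dim(P_v)= r$ since $p$ normalises $P$, giving $p\cdot v\in V_P$; and $\dim(U_{p\cdot v})= \dim(U\cap pG_vp^{-1})= \dim(p(U\cap G_v)p^{-1})= \dim(U_v)\geq t$ since $p$ normalises $U$ as well. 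So $V_{P,t}$ is $P$-stable.

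The main obstacle I anticipate is the bookkeeping around local-versus-global closedness: $V_P$ and $V_{P,t}$ are not closed in $V$ but only in $V_0$, so one must be careful to run the argument inside $V_0$ throughout and invoke the versions of Lemmas~\ref{lem:stabdimcty} and \ref{lem:closedcrit} with $V_0$ in place of $V$. There is also a small point to verify, namely the identity $\{v\in V_0\mid \dim(P_v)= r\}= \{v\in V_0\mid G_v^0\leq P\}$: the inclusion $\supseteq$ is clear since then $P_v\supseteq G_v^0$ forces $\dim(P_v)\geq r$, hence $=r$ as $P_v\leq G_v$; for $\subseteq$, if $\dim(P_v)= r= \dim(G_v)$ then $P_v$ has finite index in $G_v$, so $G_v^0= P_v^0\leq P$. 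With these points dispatched, the proof is short.
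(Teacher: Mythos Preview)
Your proposal is correct and, once the extraneous material is stripped away, it is exactly the paper's proof: show that $V_{P,t}$ is closed in $V_0$ (via Lemma~\ref{lem:stabdimcty} applied to the $P$- and $U$-actions) and $P$-stable, then invoke Lemma~\ref{lem:closedcrit} with $V_0$ in place of $V$. The first paragraph and the first half of the second paragraph---the Levi projection $c$, the stabiliser variety $C$, the morphism to $V\times L^N$---are never used and can simply be deleted; you yourself abandon that line and arrive at the direct argument, which is all the paper records.
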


\begin{proof}
 This follows from Lemma~\ref{lem:closedcrit} (applied to $V_0$), as each $V_{P,t}$ is $P$-stable and closed in $V_0$ (Lemma~\ref{lem:stabdimcty}).
\end{proof}

\begin{proof}[Proof of Theorem~\ref{thm:main}]
 We show that $G_v$ is non-reductive if and only if $v\in \bigcup_P G\cdot V_{P,1}$, where the union is over a set of representatives of the conjugacy classes of R-parabolic subgroups of $G$.  Since there are only finitely many R-parabolic subgroups up to conjugacy and each subset $G\cdot V_{P,1}$ is closed in $V_0$ (Proposition~\ref{prop:redcrit}), this suffices to prove the theorem.
 
 If $v\in G\cdot V_{P,1}$---say, $g\cdot v\in V_{P,1}$---then $G_v^0\leq g^{-1}Pg$ and $G_v^0$ contains a positive-dimensional subgroup $M$ of $g^{-1}Ug= R_u(g^{-1}Pg)$, so $G_v^0$ is not reductive: for $G_v^0$ normalises the connected unipotent subgroup of $g^{-1}Ug$ generated by the $G_v^0$-conjugates of $M$.  Hence $G_v$ is not reductive, either.  Conversely, if $v\in V_0$ and $G_v$ has nontrivial unipotent radical $H$ then we can pick a minimal R-parabolic subgroup $P$ of $G$ containing $G_v$; then $H\leq R_u(P)$ (see the paragraph following Lemma~\ref{lem:countable}), so $v\in G\cdot V_{P,1}$.  The result now follows.
\end{proof}

\begin{rem}
\label{rem:Vmin}
 More generally, set $V(t)= \{v\in V_0\mid {\rm dim}(R_u(G_v))\geq t\}$.  A similar argument to the one above shows that $V(t)= \bigcup_P G\cdot V_{P,t}$, where the union is over a set of representatives of the conjugacy classes of R-parabolic subgroups of $G$, so $V(t)$ is closed.  In particular, define $V_{\rm min}= \{v\in V_0\mid {\rm dim}(R_u(G_v))\ \mbox{is minimal}\}$; then $V_{\rm min}$ is a nonempty open subset of $V_0$.  Note that $V_{\rm min}= V_{\rm red}$ if $V_{\rm red}$ is nonempty.
\end{rem}

We finish the section with the proof of Theorem~\ref{thm:subgpint}.  Each coset space $G/H_i $ is quasi-projective, and the reductive group $M$ acts on $G/H_i$ by left multiplication.  Let $V= G/H_1\times\cdots \times G/H_t$, with $M$ acting on $V$ by the product action.  For any $(g_1,\ldots, g_t)\in G^t$, the stabiliser $M_{(g_1H_1,\ldots, g_tH_t)}$ is equal to $M\cap g_1H_1g_1^{-1}\cap \cdots \cap g_tH_tg_t^{-1}$.  Hence the set $A$ equals the preimage of $V_{\rm red}$ under the map from $G^t$ to $V$ that sends $(g_1,\ldots, g_t)$ to $(g_1H_1,\ldots, g_tH_t)$.  But $V_{\rm red}$ is open by Theorem~\ref{thm:main}, so $A$ is open.  This completes the proof.

\begin{rem}
 In the set-up in the proof of Theorem~\ref{thm:subgpint}, we do not know whether the subgroups $M\cap g_1H_1g_1^{-1}\cap \cdots \cap g_tH_tg_t^{-1}$ are all conjugate for generic $(g_1,\ldots, g_t)$.  This is the case, however, if these subgroups are $G$-completely reducible for generic $(g_1,\ldots, g_t)$ (cf.\ Example~\ref{exmp:guralnick}).
\end{rem}

\section{$G$-complete reducibility and orbits of tuples}
\label{sec:Gcr}

Let $H$ be a subgroup of $G$.  We say that $H$ is {\em $G$-completely reducible} ($G$-cr) if whenever $H$ is contained in an R-parabolic subgroup $P$ of $G$, there is an R-Levi subgroup $L$ of $P$ such that $H$ is contained in $L$.  This notion is due to Serre \cite{serre2}; see \cite{serre1} and \cite{serre1.5} for more details.  In particular, we say that $H$ is {\em $G$-irreducible} ($G$-ir) if $H$ is not contained in any proper R-parabolic subgroup of $G$ at all; then $H$ is $G$-cr.  A $G$-cr subgroup of $G$ is reductive (cf.\ \cite[Sec.~2.5 and Thm.~3.1]{BMR}), and the converse holds in characteristic 0.  A linearly reductive subgroup is $G$-cr, while a nontrivial unipotent subgroup of $G^0$ is never $G$-cr.  A normal subgroup of a $G$-cr subgroup is $G$-cr \cite[Thm.\ 3.10]{BMR}.  We denote by ${\mathcal C}(G)_{\rm cr}$ the set of conjugacy classes of $G$-cr subgroups of $G$.

\begin{lem}
\label{lem:countable}
 ${\mathcal C}(G)_{\rm cr}$ is countable.
\end{lem}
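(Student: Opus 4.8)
I would prove the stronger statement that every $G$-completely reducible subgroup of $G$ is $G(k)$-conjugate to a subgroup defined over the algebraic closure $k_0$ of the prime field of $k$. This suffices: since $k_0$ is countable, the coordinate ring of $G$ over $k_0$ is a countable Noetherian ring and hence has only countably many ideals, so $G$ has only countably many closed subgroups defined over $k_0$, and therefore only countably many conjugacy classes of $G$-cr subgroups among them. If $k = k_0$ — that is, $k = \overline{\QQ}$ or $k = \overline{\FF}_p$ — then $k$ is itself countable and the claim is immediate, so assume $k_0 \subsetneq k$; then $k/k_0$ is a proper extension of algebraically closed fields, which puts us in the setting of Lemma~\ref{lem:refined_baire} and Corollary~\ref{cor:qcmpct}. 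Since $k \neq \overline{\FF}_p$, every reductive subgroup $H$ of $G$ is topologically finitely generated (a maximal torus of $H^0$ by one element, the semisimple part by two, plus $\kappa(H)$ further elements), so a $G$-cr subgroup $H$ equals the smallest closed subgroup $\langle\mathbf{g}\rangle^-$ generated by the entries of some tuple $\mathbf{g} \in G^N$. A convenient organising device is the reduction to the $G$-irreducible case: $G$-cr-ness restricts to R-Levi subgroups \cite{BMR}, and if $P$ is a minimal R-parabolic subgroup containing $H$ with R-Levi subgroup $L$ then $H$ is, up to conjugacy, an $L$-irreducible subgroup of $L$; as there are only finitely many conjugacy classes of R-parabolic (hence of R-Levi) subgroups, an induction on $(\dim G, \kappa(G))$ ordered lexicographically reduces us to $G$-irreducible subgroups of $G$, for which the generating tuples form the \emph{open} set $(G^N)^{\mathrm{ir}} = G^N \setminus \bigcup_{[P]} G\cdot P^N$ — open since each $G\cdot P^N$ is closed (Remark~\ref{rem:Pclosed}) and there are finitely many classes of proper R-parabolics $P$.

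\textbf{The descent.} Given $H = \langle\mathbf{g}_0\rangle^-$, the coordinates of $\mathbf{g}_0$ generate a finitely generated $k_0$-subalgebra of $k$; let $T$ be its spectrum, an integral affine $k_0$-variety, and $\tilde{\mathbf{g}}\colon T \to G^N$ the corresponding $k_0$-morphism, so that $\mathbf{g}_0 = \tilde{\mathbf{g}}(t_0)$ for the $k$-point $t_0$ of $T_k$ over the generic point of $T$. Set $H_t = \langle\tilde{\mathbf{g}}(t)\rangle^-$; then $H_{t_0} = H$ and, crucially, $H_t$ is defined over $k_0$ whenever $t \in T(k_0)$. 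Because $T(k_0)$ is Zariski-dense in $T_k$ (an element $\sum_i f_i\otimes c_i$ of $k_0[T]\otimes_{k_0}k$, with the $c_i$ linearly independent over $k_0$, that vanishes on $T(k_0)$ must have all $f_i = 0$), it is enough to exhibit a nonempty Zariski-open subset of $T_k$ on which $H_t$ is $G(k)$-conjugate to $H$; intersecting it with $T(k_0)$ then yields the required $k_0$-defined conjugate. To find such an open set, I would cover $T_k$ by constructible pieces attached to the countably many pairs $(K,\ell)$, where $K$ runs over the $k_0$-defined $G$-cr subgroups (up to conjugacy), fixed with a generating tuple $\mathbf{k}$, and $\ell$ runs over $\NN$: the locus where $\tilde{\mathbf{g}}(t) \in G\cdot K^N$ and $\mathbf{k}$ is $G$-conjugate into the set of words of length at most $\ell$ in the entries of $\tilde{\mathbf{g}}(t)$ and their inverses is a $k_0$-defined constructible subset of $T$ (Chevalley). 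One checks that, after intersecting with the loci where the relevant dimensions and numbers of components coincide, these pieces cover exactly the $t$ with $H_t$ conjugate to some $K$, and in particular cover $t_0$; Lemma~\ref{lem:refined_baire} then produces one such piece with nonempty interior in $T_k$, on which $H_t$ is conjugate to a single $k_0$-defined subgroup.

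\textbf{The main obstacle.} The reason the Baire-type results of Section~\ref{sec:prelims} are needed — and the principal subtlety of the argument — is that the sets one naturally wants to work with are not constructible. Already for $G = \mathbb{G}_m$, the function $g \mapsto \dim\langle g\rangle^-$ is not constructible: it takes the value $1$ exactly on $\mathbb{G}_m \setminus \mu_\infty$, whose complement, the set of all roots of unity, is infinite and co-infinite; likewise the relation ``$\langle\mathbf{g}\rangle^-\le\langle\mathbf{h}\rangle^-$'' is not constructible. These sets are, however, countable unions or intersections of constructible sets — for instance $\{g : \dim\langle g\rangle^-\ge 1\} = \bigcap_n(\mathbb{G}_m\setminus\mu_n)$ — and it is precisely in this situation that Lemma~\ref{lem:refined_baire} and Corollary~\ref{cor:qcmpct} apply, converting a countable cover of a constructible set over the larger field $k$ into a finite one. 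The only real work, then, is the bookkeeping of these countable constructible approximations: showing that the union over $(K,\ell)$ genuinely engulfs the relevant tuples, and that a piece of full dimension pins the $G(k)$-conjugacy class of $H_t$ down to a fixed $k_0$-defined group. Everything else is the standard interaction between spreading out, generic points, and density of rational points over an algebraically closed field.
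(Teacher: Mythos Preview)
Your overall strategy --- reduce to showing that every $G$-cr subgroup is conjugate to one defined over the algebraic closure $k_0$ of the prime field --- matches the paper's exactly. The paper's proof, however, is three sentences: it invokes \cite[Prop.~3.2]{Mar} for a $k_0$-structure on $G$ and then cites \cite[Thm.~10.3]{Mar} together with \cite[Thm.~3.1]{BMR} as a black box for the descent of $G$-cr subgroups to $k_0$; no further argument is given.

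Your attempt to prove the descent directly via Lemma~\ref{lem:refined_baire} has a circularity at the crucial step. You cover (a subset of) $T_k$ by constructible pieces indexed by pairs $(K,\ell)$ with $K$ a \emph{$k_0$-defined} $G$-cr subgroup, and then assert that these pieces ``in particular cover $t_0$''. But $t_0$ lies in the piece for $(K,\ell)$ only if $H=H_{t_0}$ is conjugate to the $k_0$-defined subgroup $K$ --- which is precisely the conclusion you are after. Lemma~\ref{lem:refined_baire} requires the family $\{X_i\}$ to cover an $\Omega$-constructible set $X$; if $X=T_k$ you have not established the cover (that would require \emph{every} $H_t$, for $t\in T_k(k)$, to be conjugate to a $k_0$-defined group, i.e.\ the goal applied to all $t$ simultaneously), and if $X$ is only ``the set of $t$ with $H_t$ conjugate to some $k_0$-defined $K$'' then $X$ is a countable union of constructible sets and not itself obviously constructible. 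Restricting attention to $T(k_0)$ does not help either: that set is countable and not $k$-constructible, so the Baire lemma yields nothing. The sentence ``one checks that \ldots\ these pieces cover exactly the $t$ with $H_t$ conjugate to some $K$, and in particular cover $t_0$'' is therefore assuming what is to be proved.

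If you want a self-contained route through Lemma~\ref{lem:refined_baire}, you need a cover of $T_k$ by $k_0$-constructible sets whose covering property can be verified without already knowing the conjugacy to a $k_0$-defined subgroup; I do not see how to extract one from the pieces you describe. The argument behind \cite[Thm.~10.3]{Mar} proceeds differently, via the character variety $G^N/G$ and the behaviour of closed orbits under change of base field, rather than by a Baire-category argument on a spreading-out.
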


\begin{proof}
 Let $F$ be the algebraic closure of the prime field.  Then $G$ has an $F$-structure, by \cite[Prop.\ 3.2]{Mar}.  By \cite[Thm.\ 10.3]{Mar} and \cite[Thm.\ 3.1]{BMR}, any $G$-cr subgroup of $G$ is $G$-conjugate to an $F$-defined subgroup.  But $G(F)$ has only countably many $G(F)$-conjugacy classes of $G(F)$-cr subgroups since $F$ is countable.  The result follows.
\end{proof}

Let $H$ be a subgroup of $G$.  Let $P=P_\lambda$ be minimal amongst the R-parabolic subgroups of $G$ that contain $H$.  Then $c_\lambda(H)$ is an $L_\lambda$-ir subgroup of $L_\lambda$ (see the proof of \cite[Prop.\ 5.14(i)]{GIT}), so $c_\lambda(H)$ is $G$-cr.  As observed in Section~\ref{sec:intro}, $c_\lambda(H)$ does not depend on the choice of $\lambda$ up to conjugacy, and we set ${\mathcal D}(H)= G\cdot c_\lambda(H)$.  We have ${\mathcal D}(H)= G\cdot H$ if and only if $c_\lambda(H)$ is conjugate to $H$ if and only if $H$ is $G$-cr \cite[Prop.\ 5.14(i)]{GIT}.  For any $\mu\in Y(G)$ such that $H\leq P_\mu$,  if $H$ is $G$-cr then $c_\mu(H)$ is conjugate to $H$, and if $c_\mu(H)$ is $G$-ir then $L_\mu= G$, so $H= c_\mu(H)$ is $G$-ir.  Since $c_\lambda(H)$ is reductive, $R_u(H)\leq R_u(P_\lambda)$ and $H$ is reductive if and only if $H\cap R_u(P_\lambda)$ is finite if and only if ${\rm dim}(H)= {\rm dim}(c_\lambda(H))$.  Moreover, ${\rm dim}(C_G(H))\leq {\rm dim}(C_G(c_\lambda(H)))$, with equality if and only if $H$ is $G$-cr \cite[Thm.\ 5.8(ii)]{GIT}, and ${\rm dim}(c_\lambda(H))= {\rm dim}(H)- {\rm dim}(R_u(H))$.  If $M\leq H$ and $\alpha_H(M)= H/R_u(H)$ then ${\mathcal D}(M)= {\mathcal D}(H)$.

If ${\rm char}(k)= 0$ then $H$ has a Levi subgroup $M$ by \cite[VIII, Thm.\ 4.3]{hochschild}: that is, $H$ has a reductive subgroup $M$ such that $H\cong M\ltimes R_u(H)$.  Then $c_\lambda(H)= c_\lambda(M)$ is conjugate to $M$, since $M$ is $G$-cr, so ${\mathcal D}(H)= G\cdot M$.

The paper \cite{BMR} laid out an approach to the theory of $G$-complete reducibility using geometric invariant theory; we briefly review this now.  As described in Section~\ref{sec:intro}, the idea is to study subgroups of $G$ indirectly by looking instead at generating tuples for subgroups.  Given $s\in {\mathbb N}$ and ${\mathbf g}= (g_1,\ldots, g_s)\in G^s$, we denote by ${\mathcal G}({\mathbf g})$ or ${\mathcal G}( g_1,\ldots, g_s)$ the closed subgroup generated by $g_1,\ldots, g_s$.  If $H$ is of the form ${\mathcal G}(g_1,\ldots, g_s)$ for some $g_1,\ldots, g_s\in G$ then we say that $H$ is {\em topologically finitely generated}, and we call $\tuple{g}$ a {\em generating $s$-tuple} or {\em generating tuple} for $H$.  The structure of the set of generating $s$-tuples is complicated; for instance, if $H= k^*$ and $k$ is solid (Definition~\ref{defn:solid}) then both $\{\tuple{h}\in H^s\mid {\mathcal G}(\tuple{h})= H\}$ and $\{\tuple{h}\in H^s\mid {\mathcal G}(\tuple{h})\neq  H\}$ are dense in $H^s$, even when $s= 1$.

Recall that $G$ acts on $G^N$ by simultaneous conjugation.  We call the quotient space $G^N/G$ a {\em character variety} and we denote the canonical projection from $G^N$ to $G^N/G$ by $\pi_G$.  If $\lambda\in Y(G)$ then we abuse notation and denote the map $c_\lambda\times \cdots\times c_\lambda\colon P_\lambda^N\ra L_\lambda^N$ by $c_\lambda$.  We have $\pi_G({\mathbf g})= \pi_G(c_\lambda({\mathbf g}))$ and ${\mathcal G}(c_\lambda(\tuple{g}))= c_\lambda({\mathcal G}(\tuple{g}))$ for all ${\mathbf g}\in P_\lambda^N$.  If $\tuple{g}\in P_\lambda^N$ and $\tuple{g'}\in P_{\lambda'}^N$ such that $G\cdot c_\lambda(\tuple{g})$ and $G\cdot c_{\lambda'}(\tuple{g'})$ are closed then $\pi_G(\tuple{g})= \pi_G(\tuple{g'})$ if and only if $c_\lambda(\tuple{g})$ and $c_{\lambda'}(\tuple{g'})$ are conjugate (see \cite[Cor.\ 3.5.2]{New}).  In particular, if $G\cdot \tuple{g'}$ is closed then we can take $\lambda'= 0$, so $\pi_G(\tuple{g})= \pi_G(\tuple{g'})$ if and only if $c_\lambda(\tuple{g})$ is conjugate to $\tuple{g'}$.

We need a condition on the field to ensure that reductive groups are topologically finitely generated. 

\begin{defn}
\label{defn:solid}
 An algebraically closed field is {\em solid} if either it has characteristic 0 or it has characteristic $p> 0$ and is transcendental over $\FF_p$.
\end{defn}

The next result allows us to understand subgroups of $G$ by studying generating tuples; several of the results stated above for subgroups have equivalent formulations given for tuples below.

\begin{prop}
\label{prop:topfg}
 \cite[Lem.\ 9.2]{Mar}.  Suppose $k$ is solid.  Let $H$ be a reductive algebraic group and suppose that $N\geq \kappa(H)+ 1$.  Then there exists ${\mathbf h}\in H^N$ such that ${\mathcal G}({\mathbf h})= H$.
\end{prop}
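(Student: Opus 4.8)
The plan is to reduce to the connected case and then glue. Granting the core claim that a \emph{connected} reductive group over a solid field is topologically generated by two elements, pick coset representatives $1 = x_1, x_2, \ldots, x_d$ for $H^0$ in $H$, where $d = \kappa(H)$ (note $H^0$ is connected reductive since $H$ is reductive), choose $s, t \in H^0$ with $\mathcal{G}(s, t) = H^0$ from the core claim, and for $N \geq d + 1$ take $\mathbf{h} = (s, t, x_2, x_3, \ldots, x_d, 1, \ldots, 1) \in H^N$. Then $\mathcal{G}(\mathbf{h}) \supseteq H^0$ and $\mathcal{G}(\mathbf{h})$ meets every connected component of $H$, so $\mathcal{G}(\mathbf{h}) = H$.

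It remains to prove the core claim, so let $L$ be a connected reductive group over the solid field $k$; I must produce $s, t \in L$ with $\mathcal{G}(s,t) = L$ (then pad with identities to reach any $N \geq 2$). Fix a maximal torus $T \cong (k^*)^r$ of $L$. First I would produce $s \in T$ with $\overline{\langle s\rangle} = T$: this is precisely where solidity enters. It amounts to finding $(s_1, \ldots, s_r) \in (k^*)^r$ admitting no nontrivial relation $\prod_i s_i^{n_i} = 1$; in characteristic $0$ one may use distinct rational primes or algebraically independent elements, and in characteristic $p$ one takes $s_i = \tau + (i - 1)$ for a transcendental $\tau$, the absence of a relation following from unique factorisation in $k[\tau]$. (No such $s$ exists over $\overline{\FF_p}$, which is why the hypothesis is indispensable.)

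Next I would invoke structure theory to exhibit finitely many proper closed subgroups $M_1, \ldots, M_\ell$ of $L$, each a proper closed subvariety, with the property that every proper closed subgroup of $L$ containing $T$ lies in some $M_i$. For this: a non-reductive closed subgroup lies in a proper parabolic subgroup by the Borel--Tits theorem, and only finitely many parabolics of $L$ contain the fixed torus $T$; a reductive closed subgroup $M$ with $T \leq M$ has maximal rank, so $M^0 = \langle T, U_\alpha : \alpha \in \Psi\rangle$ for a symmetric closed subsystem $\Psi$ of $\Phi(L, T)$ --- only finitely many possibilities --- and $M \leq N_L(M^0)$, which is a proper subgroup because a proper connected reductive subgroup of $L$ containing a maximal torus of $L$ cannot be normal (a rank count forces its centraliser to have rank $0$, hence to be trivial, hence the subgroup to be all of $L$). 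Taking the finitely many parabolics containing $T$ together with the finitely many $N_L(M^0)$ yields the list. Since $L$ is irreducible, $\Omega := L \setminus \bigcup_i M_i$ is nonempty and open; pick any $t \in \Omega$. Then $\mathcal{G}(s, t)$ contains both $T = \overline{\langle s\rangle}$ and $t$, so if it were proper it would lie in some $M_i$, forcing $t \in M_i$ --- a contradiction. Hence $\mathcal{G}(s, t) = L$.

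The hard part is this last structural input: controlling all proper closed subgroups of $L$ that contain $T$ by a finite list. It relies on the Borel--Tits theorem, the Borel--de Siebenthal description of maximal-rank reductive subgroups, the finiteness of the set of parabolic subgroups through a fixed maximal torus, and the rank argument giving $N_L(M^0) \neq L$. Everything else --- the solidity computation producing $s$, and the component-group bookkeeping in the general case --- is routine.
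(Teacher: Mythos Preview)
The paper does not prove this proposition; it is quoted from \cite[Lem.~9.2]{Mar} without argument, so there is no in-paper proof to compare against.  Your outline is the standard one and is essentially correct, but two details need patching.

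First, in positive characteristic your choice $s_i = \tau + (i-1)$ breaks once the rank $r$ exceeds $p$, since the shifts $0,1,\ldots,r-1$ are then not all distinct in $k$.  The repair is immediate: replace them by any $r$ distinct elements $c_1,\ldots,c_r$ of $\overline{\FF_p}$ and set $s_i=\tau-c_i$; unique factorisation in $\overline{\FF_p}[\tau]$ then gives multiplicative independence exactly as you indicate.

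Second, your justification that $N_L(M^0)\neq L$ via a ``rank count forcing the centraliser to have rank $0$'' is wrong as stated: if $M^0=T$ then $C_L(M^0)=C_L(T)=T$ has full rank, not rank~$0$.  A clean replacement is the following.  Suppose $M^0\supseteq T$ is proper, closed, connected and normal in $L$.  For each $\alpha\in\Phi(L,T)$ and each $1\neq u\in U_\alpha$, the morphism $t\mapsto [t,u]$ from $T$ to $U_\alpha$ has infinite image (since $\alpha$ is nontrivial and $k$ is infinite), so its closure is all of the one-dimensional group $U_\alpha$; but each $[t,u]$ lies in $M^0$ because $t\in M^0$ and $M^0\trianglelefteq L$.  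Hence $U_\alpha\leq M^0$ for every root $\alpha$, giving $M^0\supseteq\langle T,\,U_\alpha:\alpha\in\Phi\rangle=L$, a contradiction.  With these two fixes your proof goes through.
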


Proposition~\ref{prop:topfg} fails if $k= \ovl{\FF_p}$: for then any topologically finitely generated subgroup of $G$ is finite.  This is the reason for some of the technical complexity in what follows.  We can, however, formulate the results of this section for arbitrary $k$: for example, by using the notion of a ``generic tuple'' \cite[Defn.\ 5.4]{GIT}.  Even when $k$ is solid, non-reductive subgroups need not be topologically finitely generated (for example, a topologically finitely generated subgroup of a unipotent group in positive characteristic is finite).  This is why we need to work with $H/R_u(H)$ rather than just $H$ in Definition~\ref{defn:preceq}.

The next result is \cite[Cor.~3.7]{BMR}.

\begin{thm}
 Let ${\mathbf g}\in G^N$.  Then the orbit $G\cdot {\mathbf g}$ is closed if and only if ${\mathcal G}({\mathbf g})$ is $G$-cr.
\end{thm}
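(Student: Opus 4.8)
The plan is to prove the two implications separately, in each case using the dictionary between cocharacters and R-parabolic subgroups recalled in Section~\ref{sec:prelims} together with the Hilbert--Mumford--Kempf criterion for closedness of orbits. Write $H = \mathcal{G}(\mathbf{g})$. Since $G^N$ is affine, $G$ is reductive, $G\cdot\mathbf{g}$ is closed if and only if $G^0\cdot\mathbf{g}$ is closed (a finite union of $G^0$-orbits, all of the same dimension), and $Y(G) = Y(G^0)$, one can apply the Hilbert--Mumford--Kempf theorem to $G^0$ acting on $G^N$ to obtain: \emph{$G\cdot\mathbf{g}$ is closed if and only if, for every $\mu\in Y(G)$ such that $\lim_{a\to 0}\mu(a)\cdot\mathbf{g}$ exists, this limit lies in $G\cdot\mathbf{g}$.} Now $\lim_{a\to 0}\mu(a)\cdot\mathbf{g}$ exists exactly when every coordinate of $\mathbf{g}$ lies in $P_\mu$, i.e.\ when $H\leq P_\mu$, and in that case the limit is $c_\mu(\mathbf{g})$.

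For the implication ``$H$ is $G$-cr $\Rightarrow$ $G\cdot\mathbf{g}$ is closed'' I would argue as follows. Suppose $H\leq P_\mu$ for some $\mu\in Y(G)$. By $G$-complete reducibility, $H$ lies in some R-Levi subgroup of $P_\mu$, which is necessarily of the form $wL_\mu w\inverse$ for some $w\in R_u(P_\mu)$ (the R-Levi subgroups of a given R-parabolic subgroup are permuted transitively by its unipotent radical; see \cite[Sec.\ 6]{BMR}), so $H\leq wL_\mu w\inverse$. Since $c_\mu\colon P_\mu\ra L_\mu$ is a homomorphism that restricts to the identity on $L_\mu$ and kills $R_u(P_\mu)$, for $h\in H$ we have $c_\mu(h) = c_\mu(w\inverse h w) = w\inverse h w$, and hence $c_\mu(\mathbf{g}) = w\inverse\cdot\mathbf{g}\in G\cdot\mathbf{g}$. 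By the criterion of the previous paragraph, $G\cdot\mathbf{g}$ is closed.

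For the converse I would argue by contraposition. Suppose $H$ is not $G$-cr and pick an R-parabolic subgroup $P = P_\lambda$ minimal among those containing $H$. By \cite[Thm.\ 5.8(ii)]{GIT} (as recalled in Section~\ref{sec:Gcr}), $\dim C_G(H) < \dim C_G(c_\lambda(H))$. The group $C_G(H)$ is the stabiliser of $\mathbf{g}$ for the simultaneous-conjugation action, and $C_G(c_\lambda(H)) = C_G(c_\lambda(\mathbf{g}))$ is the stabiliser of $c_\lambda(\mathbf{g})$, so $\dim G\cdot\mathbf{g} = \dim G - \dim C_G(H) > \dim G - \dim C_G(c_\lambda(H)) = \dim G\cdot c_\lambda(\mathbf{g})$. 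On the other hand, $\mathbf{g}\in P_\lambda^N$, so $c_\lambda(\mathbf{g}) = \lim_{a\to 0}\lambda(a)\cdot\mathbf{g}$ lies in $\overline{G\cdot\mathbf{g}}$; since $G\cdot c_\lambda(\mathbf{g})$ has strictly smaller dimension than $G\cdot\mathbf{g}$ it cannot equal $G\cdot\mathbf{g}$, so $c_\lambda(\mathbf{g})\in\overline{G\cdot\mathbf{g}}\setminus G\cdot\mathbf{g}$ and $G\cdot\mathbf{g}$ is not closed.

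I expect the only heavyweight input to be the Hilbert--Mumford--Kempf criterion invoked in the first paragraph; granted that, the forward implication is a short computation with the projection $c_\mu$, and the converse is essentially immediate once the dimension comparison \cite[Thm.\ 5.8(ii)]{GIT} is in hand, that result being exactly what converts the failure of $G$-complete reducibility into a genuine drop in orbit dimension under specialisation. The only point needing a little care is that $G$ may be disconnected, which is handled by the reduction to $G^0$ noted above; I do not anticipate any other obstacle.
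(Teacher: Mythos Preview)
The paper does not prove this theorem; it simply cites it as \cite[Cor.~3.7]{BMR}. Your argument is essentially the one given there, and the forward implication is correct as written.

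There is, however, a circularity concern in your proof of the converse. You invoke \cite[Thm.\ 5.8(ii)]{GIT} for the strict inequality $\dim C_G(H) < \dim C_G(c_\lambda(H))$ when $H$ is not $G$-cr, but that result postdates \cite{BMR} and its proof uses the very theorem you are trying to establish (note that in the present paper it is stated in Section~\ref{sec:Gcr} \emph{after} the closed-orbit criterion). Fortunately the dimension comparison is unnecessary: with $P_\lambda$ minimal among R-parabolics containing $H$, the paper already records that $c_\lambda(H)$ is $L_\lambda$-irreducible and hence $G$-cr. Since $H$ is not $G$-cr, $c_\lambda(H)$ is not conjugate to $H$, and therefore $c_\lambda(\mathbf{g})$ is not $G$-conjugate to $\mathbf{g}$ (for $\mathcal{G}(c_\lambda(\mathbf{g})) = c_\lambda(\mathcal{G}(\mathbf{g})) = c_\lambda(H)$). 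As $c_\lambda(\mathbf{g}) = \lim_{a\to 0}\lambda(a)\cdot\mathbf{g}\in\overline{G\cdot\mathbf{g}}$, the orbit $G\cdot\mathbf{g}$ is not closed. This repair keeps your argument self-contained and matches the approach of \cite{BMR}.
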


Let $H$ be a reductive subgroup of $G$.  The inclusion of $H^N$ in $G^N$ gives rise to a morphism $\Psi^G_H\colon H^N/H\ra G^N/G$, given by $\Psi^G_H(\pi_H({\mathbf h}))= \pi_G({\mathbf h})$ for ${\mathbf h}\in H^N$.  The next result is \cite[Thm.~1.1]{Mar}.

\begin{thm}
\label{thm:finiteness}
 The morphism $\Psi^G_H$ is finite.  In particular, $\Psi^G_H(H^N/H)$ is closed in $G^N/G$.
\end{thm}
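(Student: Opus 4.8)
The plan is to deduce finiteness of $\Psi:=\Psi^G_H$ from properness. Since $G$ and $H$ are reductive, the character varieties $G^N/G$ and $H^N/H$ are affine, so $\Psi$ is an affine morphism; and a proper affine morphism is finite. Hence it suffices to verify the valuative criterion for properness of $\Psi$ (which is separated and of finite type). After the routine reductions --- passing to a valuation ring $\mathcal{O}$ whose fraction field $\Omega$ is algebraically closed, and using separatedness of $H^N/H$ for uniqueness of the lift --- what remains to be proved is the following. For every $\tuple{h}\in H(\Omega)^N$: if $f(\tuple{h})\in\mathcal{O}$ for every $f$ in the restriction of $k[G^N]^G$ to $H^N$, then $f(\tuple{h})\in\mathcal{O}$ for every $f\in k[H^N]^H$. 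This is the same as saying that $k[H^N]^H$ is integral over the image of the restriction map $k[G^N]^G\to k[H^N]^H$, which together with finite generation of $k[H^N]^H$ (Hilbert's theorem, as $H$ is reductive) gives that $\Psi$ is finite; the closedness of $\Psi(H^N/H)$ then follows, proving the ``in particular'' clause.

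It is convenient first to reduce to the case $G=\GL_n$. Fixing a closed embedding $G\hookrightarrow\GL_n$, one has $\Psi^{\GL_n}_H=\Psi^{\GL_n}_G\circ\Psi^G_H$ with $\Psi^{\GL_n}_G$ separated, so if both $\Psi^{\GL_n}_H$ and $\Psi^{\GL_n}_G$ are finite then $\Psi^G_H$ is proper, hence (being affine) finite. So we may assume $H\leq G=\GL_n$. The gain is that $k[\GL_n^N]^{\GL_n}$ is explicit --- it is generated by the coefficients of the characteristic polynomials of words in $g_1,\dots,g_N$ --- so that boundedness of all $G$-invariants at $\tuple{h}$ becomes a concrete condition on the eigenvalues of words in the $h_i$, and the whole question can be rephrased in terms of $n$-dimensional representations of the free group $F_N$ and their semisimplifications.

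The boundedness assertion displayed above is the heart of the matter and the step I expect to be the main obstacle. By the Hilbert--Mumford criterion in the form valid over a valuation ring (Kempf's instability theory, or semistable reduction for the quotient $G^N\to G^N/G$), the hypothesis that every $G$-invariant is bounded at $\tuple{h}$ means exactly that no cocharacter of $G$ drives $\tuple{h}$ out of $G(\mathcal{O})^N$; equivalently, up to $G(\Omega)$-conjugacy $\tuple{h}$ already lies in $G(\mathcal{O})^N$. One must upgrade this to the statement with $H$ in place of $G$. The key observation is that a cocharacter $\mu\in Y(H)$ witnessing unboundedness of some $H$-invariant at $\tuple{h}$ is, in particular, a cocharacter of $G$ having the same effect on $\tuple{h}\in G(\Omega)^N$; so it is enough to know that every ``$G$-destabilisation'' of a tuple lying in $H^N$ can be realised inside $H$ --- precisely, that the optimal destabilising cocharacter for the $G$-action on such a tuple can be chosen in $Y(H)$, and more generally that the Cartan decomposition of $G(\Omega)$ can be taken compatible with the reductive subgroup $H$ (choose a maximal torus of $G$ containing one of $H$ and use reductivity of $H$ to keep the reduction inside $H$ at each stage). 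Making this compatibility precise, in arbitrary characteristic and with $G$ --- hence $H$ --- possibly non-connected, is the technical core; it is exactly the sort of statement that the $G$-complete reducibility and optimal-cocharacter machinery of Section~\ref{sec:Gcr} is built to supply. Once it is in hand, the $H$-invariants are bounded at $\tuple{h}$, the valuative criterion holds, and $\Psi$ is finite. (An alternative organisation would first treat $H=T$ a maximal torus of $G$ --- where $T^N/T=T^N$ and $\Psi^G_T$ factors as the finite surjection $T^N\to T^N/W$ followed by an injection $T^N/W\hookrightarrow G^N/G$ whose finiteness is a Chevalley-restriction-type statement about $k[G^N]^G\to k[T^N]^W$ --- then pass to $G$-completely reducible $H$, and finally reduce the general case to the $G$-cr case using the maps $c_\lambda$; but the route above is the most direct.)
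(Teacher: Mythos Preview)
The paper does not prove this theorem; it is quoted as \cite[Thm.~1.1]{Mar}, so there is no in-paper argument to compare against.

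Your overall strategy is legitimate: both character varieties are affine, so finiteness is equivalent to properness, and the reduction to $G=\GL_n$ via the cancellation property for proper morphisms is correct. The difficulty is entirely in the core step, and there your argument has a real gap. You claim that unboundedness of some $H$-invariant at $\tuple{h}$ is \emph{witnessed by} a single cocharacter $\mu\in Y(H)$, which then also witnesses $G$-unboundedness. But Hilbert--Mumford/Kempf theory produces a destabilising cocharacter for points in the nullcone of a linear action; the relevant statement over a DVR is the semistable-reduction theorem, whose content is existential in the \emph{opposite} direction: boundedness of all $G$-invariants at $\tuple{h}$ is equivalent to the \emph{existence} of $g\in G(\Omega)$ with $g\cdot\tuple{h}\in G(\mathcal{O})^N$. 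There is no dual ``$\exists\,\mu$ destabilising'' characterisation of unboundedness over $\mathcal{O}$ that makes your contrapositive run. Indeed, your paragraph reverses direction midway---from ``$\mu\in Y(H)$ witnesses $H$-unboundedness, hence $G$-unboundedness'' to ``it suffices that every $G$-destabilisation can be realised inside $H$''---and these are opposite implications. What is actually required is: given $\tuple{h}\in H(\Omega)^N$ and $g\in G(\Omega)$ with $g\cdot\tuple{h}\in G(\mathcal{O})^N$, produce $h'\in H(\Omega)$ with $h'\cdot\tuple{h}\in H(\mathcal{O})^N$. Your appeal to a ``Cartan decomposition of $G(\Omega)$ compatible with $H$'' does not supply this (the decomposition $G(\Omega)=G(\mathcal{O})H(\Omega)$ fails already when $H$ is a maximal torus), and the material in Section~\ref{sec:Gcr} is formulated over $k$, not over a valuation ring, so it does not obviously transfer. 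The alternative organisation you sketch at the end is more promising, but the Chevalley-restriction step for $N$-tuples on which it rests is itself the heart of the matter and is left as a black box.
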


\begin{rem}
\label{rem:genericGir}
 (i) The set $(G^N)_{\rm ir}:= \{{\mathbf g}\in G^N\mid \mbox{${\mathcal G}({\mathbf g})$ is $G$-ir}\}$ is open; this was proved in \cite[Cor.\ 8.4]{Mar} but it also follows from Remark~\ref{rem:Pclosed}.
 
 (ii) Suppose $V$ is irreducible, $N\geq 2$ and there exists $v\in V_0$ such that $G_v^0$ is $G$-ir.  Then $\phi^{-1}((G^N)_{\rm ir})$ is a nonempty open $G$-stable subset of $C$ by (i), and it follows from arguments in Section~\ref{sec:stabcpts} that $\eta(\phi^{-1}((G^N)_{\rm ir}))$ is a dense subset of $V$ (cf.\ Remark~\ref{rem:genericGirstab}).  This means that generic stabilisers are ``large'' in the sense of not being contained in any proper R-parabolic subgroup of $G$.  On the other hand, we can interpret Lemma~\ref{lem:stabdimcty} as saying that generic stabilisers are ``small''.  This special case illustrates the tension between largeness and smallness, from which several of our results spring.
\end{rem}

\section{The partial order $\preceq$}
\label{sec:preceq}

In this section we introduce a technical tool which we need for the proof of Theorem~\ref{thm:genericstab}.  For simplicity, we assume throughout the section that $k$ is solid; see Remark~\ref{rem:nonsolid} for a discussion of arbitrary $k$.

\begin{defn}
\label{defn:preceq}
 Let $H,M$ be subgroups of $G$.  We define $G\cdot H\preceq G\cdot M$ if there exist $s\in {\mathbb N}$, $\tuple{h}\in H^s$ and $\tuple{m}\in M^s$ such that $\alpha_H({\mathcal G}(\tuple{h}))= H/R_u(H)$ and $\pi_G(\tuple{m})= \pi_G(\tuple{h})$.  (It is clear that this does not depend on the choice of subgroup in the conjugacy classes $G\cdot H$ and $G\cdot M$.)  We define $G\cdot H\prec G\cdot M$ if $G\cdot H\preceq G\cdot M$ and $G\cdot H\neq G\cdot M$.
\end{defn}

\begin{lem}
\label{lem:preccrit}
 Let $H,M\leq G$.  Then $G\cdot H\preceq G\cdot M$ if and only if ${\mathcal D}(H)\preceq {\mathcal D}(M)$.
\end{lem}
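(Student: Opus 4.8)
The statement equates two instances of the relation $\preceq$: the one between the conjugacy classes $G\cdot H$ and $G\cdot M$, and the one between their $G$-completely reducible degenerations ${\mathcal D}(H)$ and ${\mathcal D}(M)$. The natural strategy is to reduce $\preceq$ to a statement about character varieties that is manifestly insensitive to passing from a subgroup to its $G$-cr degeneration. Concretely, I would first reformulate the defining condition: for a subgroup $K$ with a generating-type tuple $\tuple{k}\in K^s$ with $\alpha_K(\mathcal G(\tuple k))=K/R_u(K)$, choose $\lambda\in Y(G)$ with $P_\lambda$ minimal containing $K$; then $c_\lambda(\tuple k)\in L_\lambda^s$ has $G\cdot c_\lambda(\tuple k)$ closed (since $\mathcal G(c_\lambda(\tuple k))=c_\lambda(\mathcal G(\tuple k))$ is $L_\lambda$-ir, hence $G$-cr, by the discussion after Lemma~\ref{lem:countable}), and $\pi_G(\tuple k)=\pi_G(c_\lambda(\tuple k))$. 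Moreover $\alpha_{c_\lambda(K)}(\mathcal G(c_\lambda(\tuple k)))=c_\lambda(K)/R_u(c_\lambda(K))=c_\lambda(K)$ because $c_\lambda(K)$ is reductive. So the point $\pi_G(\tuple k)\in G^N/G$ (take $s=N$, enlarging via Proposition~\ref{prop:topfg} and padding with $1$'s if necessary, using $N\geq\kappa+1$) lies in the image $\Psi^G_{K'}(K'^N/K')$ for $K'$ a representative of ${\mathcal D}(K)$, and this image is closed by Theorem~\ref{thm:finiteness}.

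The key lemma I would isolate is: \emph{$G\cdot H\preceq G\cdot M$ if and only if $\Psi^G_{H'}(H'^N/H')\subseteq \Psi^G_{M'}(M'^N/M')$ where $H'\in{\mathcal D}(H)$, $M'\in{\mathcal D}(M)$ are $G$-cr representatives} — or rather the weaker one-point version, that $\preceq$ holds iff some generic point of $H'^N/H'$ maps into $\Psi^G_{M'}(M'^N/M')$. For the forward direction: given $s,\tuple h,\tuple m$ as in Definition~\ref{defn:preceq}, apply $c_\lambda$ to $\tuple h$ and $c_\mu$ to $\tuple m$ (for appropriate minimal R-parabolics), getting tuples in $H'^s$ and $M'^s$ respectively with the same image in $G^N/G$ after padding to length $N$; since $\alpha_{H'}(\mathcal G(c_\lambda(\tuple h)))=H'$, this tuple generates $H'$, and its $\Psi^G_{H'}$-image equals a point of $\Psi^G_{M'}(M'^N/M')$. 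For the reverse direction one runs the same computation backwards: a point of $H'^N/H'$ coming from a generating tuple of $H'$ whose $\pi_G$-class lies in $\Psi^G_{M'}(M'^N/M')$ furnishes, via $G$-cr-ness and closed orbits (using the criterion from \cite[Cor.\ 3.5.2]{New} recalled in Section~\ref{sec:Gcr}), a tuple in $M'^N$ with matching character; one then lifts a generating tuple of $H$ with $\alpha_H$-image all of $H/R_u(H)$ up through $c_\lambda$, noting that ${\mathcal D}(H)={\mathcal D}(\text{its preimage})$ so the degeneration bookkeeping is consistent. Once this lemma is in hand, Lemma~\ref{lem:preccrit} is immediate: both $G\cdot H\preceq G\cdot M$ and ${\mathcal D}(H)\preceq{\mathcal D}(M)$ translate into the \emph{same} containment/membership statement about the closed sets $\Psi^G_{H'}(H'^N/H')$ and $\Psi^G_{M'}(M'^N/M')$, because ${\mathcal D}(H')=G\cdot H'={\mathcal D}(H)$ for the $G$-cr representative $H'$, and likewise for $M$.

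The main obstacle I anticipate is \textbf{the bookkeeping around $R_u$ and the passage to $c_\lambda$}, i.e. making sure that "$\alpha_H(\mathcal G(\tuple h))=H/R_u(H)$" is correctly tracked under $c_\lambda$ and under replacing $H$ by a $G$-cr representative of ${\mathcal D}(H)$. The clean fact to lean on is the one stated at the end of Section~\ref{sec:Gcr}: if $M\leq H$ and $\alpha_H(M)=H/R_u(H)$ then ${\mathcal D}(M)={\mathcal D}(H)$, together with $c_\lambda(\mathcal G(\tuple g))=\mathcal G(c_\lambda(\tuple g))$ and $\dim c_\lambda(H)=\dim H-\dim R_u(H)$. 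A secondary technical point is the choice of $s$: Definition~\ref{defn:preceq} allows any $s\in\mathbb N$, but to land inside $H'^N/H'$ and use Theorem~\ref{thm:finiteness} I want $s=N$ with $N\geq\kappa(H')+1$; this is harmless since one may always pad a shorter tuple with copies of the identity (which changes neither $\mathcal G$ nor $\pi_G$), and enlarge via Proposition~\ref{prop:topfg}, using that $k$ is solid as assumed throughout the section. With these two points handled, the equivalence falls out of the character-variety reformulation and the closedness/finiteness results already available.
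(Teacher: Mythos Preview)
Your overall plan matches the paper's: fix $\lambda,\mu$ with $H\leq P_\lambda$, $M\leq P_\mu$ and $c_\lambda(H),c_\mu(M)$ both $G$-cr, and show $G\cdot H\preceq G\cdot M$ iff $G\cdot c_\lambda(H)\preceq G\cdot c_\mu(M)$. The character-variety wrapping is cosmetic; the substance is the same two implications. Your forward direction is correct and is exactly the paper's argument. The backward direction, however, has a real gap---precisely at the point you yourself flag as the main obstacle, but do not actually resolve.

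Concretely: given a generating tuple $\tuple{x}\in c_\lambda(H)^s$ of $H':=c_\lambda(H)$ and some $\tuple{y}\in c_\mu(M)^s$ with $\pi_G(\tuple{x})=\pi_G(\tuple{y})$, you lift to $\tuple{h}\in H^s$ and $\tuple{m}\in M^s$. The equality $\pi_G(\tuple{h})=\pi_G(\tuple{m})$ is fine, but Definition~\ref{defn:preceq} also demands $\alpha_H({\mathcal G}(\tuple{h}))=H/R_u(H)$. Surjectivity of $c_\lambda|_{{\mathcal G}(\tuple{h})}$ onto $H'$ only yields surjectivity of ${\mathcal G}(\tuple{h})\to H/(H\cap R_u(P_\lambda))$, and the kernel $H\cap R_u(P_\lambda)$ satisfies only $(H\cap R_u(P_\lambda))^0=R_u(H)$; when $H\cap R_u(P_\lambda)$ is disconnected, $\alpha_H({\mathcal G}(\tuple{h}))$ can miss cosets of $R_u(H)$. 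The fact you cite---``if $M\leq H$ and $\alpha_H(M)=H/R_u(H)$ then ${\mathcal D}(M)={\mathcal D}(H)$''---is exactly what you would like to verify for $M={\mathcal G}(\tuple{h})$, not a tool that establishes it, and the dimension formula $\dim c_\lambda(H)=\dim H-\dim R_u(H)$ is blind to this finite component group.

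The paper's fix is to append to $\tuple{h}$ elements $h_{s+1},\dots,h_{s+t}\in H\cap R_u(P_\lambda)$ whose $\alpha_H$-images generate the finite quotient $(H\cap R_u(P_\lambda))/R_u(H)$, and to pad $\tuple{x},\tuple{y},\tuple{m}$ with $1$'s in the new slots. One then still has to check that $\pi_G$ of the padded $H'$-tuple and the padded $M'$-tuple agree, which is not automatic from $\pi_G(\tuple{x})=\pi_G(\tuple{y})$; the paper does this via the closed-orbit criterion you allude to (passing to $c_\nu(\tuple{y})$ conjugate to $\tuple{x}$ and observing the padded versions remain conjugate). Without this device, or some equivalent, your backward implication does not go through.
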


\begin{proof}
 Pick $\lambda,\mu\in Y(G)$ such that $H\leq P_\lambda$, $c_\lambda(H)$ is $G$-cr, $M\leq P_\mu$ and $c_\mu(M)$ is $G$-cr.  Since ${\mathcal D}(H)= G\cdot c_\lambda(H)$ and ${\mathcal D}(M)= G\cdot c_\mu(M)$, it is enough to show that $G\cdot H\preceq G\cdot M$ if and only if $G\cdot c_\lambda(H)\preceq G\cdot c_\mu(M)$.
 
 So suppose $G\cdot H\preceq G\cdot M$.  There exist $s\in {\mathbb N}$, $\tuple{m}= (m_1,\ldots, m_s)\in M^s$ and $\tuple{h}= (h_1,\ldots, h_s)\in H^s$ such that $\alpha_H({\mathcal G}(\tuple{h}))= H/R_u(H)$ and $\pi_G(\tuple{m})= \pi_G(\tuple{h})$.  Then $c_\mu(\tuple{m})\in c_\mu(M)^s$ and $\pi_G(c_\mu(\tuple{m}))= \pi_G(c_\lambda(\tuple{h}))$.  Now $c_\lambda(H)$ is reductive, so $c_\lambda(R_u(H))= 1$.  It follows that ${\mathcal G}(c_\lambda(\tuple{h}))= c_\lambda({\mathcal G}(\tuple{h}))= c_\lambda(H)$.  This shows that $G\cdot c_\lambda(H)\preceq G\cdot c_\mu(M)$.
 
 Conversely, suppose $G\cdot c_\lambda(H)\preceq G\cdot c_\mu(M)$.  There exist $s\in {\mathbb N}$, $\tuple{y}= (y_1,\ldots, y_s)\in c_\mu(M)^s$ and $\tuple{x}= (x_1,\ldots, x_s)\in c_\lambda(H)^s$ such that ${\mathcal G}(\tuple{x})= c_\lambda(H)$ and $\pi_G(\tuple{y})= \pi_G(\tuple{x})$.  The maps $c_\lambda\colon H^s\ra c_\lambda(H)^s$ and $c_\mu\colon M^s\ra c_\mu(M)^s$ are surjective, so there exist $\tuple{h}= (h_1,\ldots, h_s)\in H^s$ and $\tuple{m}= (m_1,\ldots, m_s)\in M^s$ such that $c_\lambda(\tuple{h})= \tuple{x}$ and $c_\mu(\tuple{m})= \tuple{y}$.
 
 As $c_\lambda(H)$ is reductive, $R_u(H)\leq R_u(P_\lambda)$.  As $(R_u(P_\lambda)\cap H)^0$ is a connected normal unipotent subgroup of $H$, we must have $(R_u(P_\lambda)\cap H)^0\leq R_u(H)$, and it follows that $(R_u(P_\lambda)\cap H)^0= R_u(H)$.  Choose $h_{s+1},\ldots, h_{s+t}\in R_u(P_\lambda)\cap H$ such that the $\alpha_H(h_i)$ for $s+ 1\leq i\leq s+ t$ generate the finite group $(R_u(P_\lambda)\cap H)/R_u(H)$.  Set $\tuple{h}'= (h_1,\ldots, h_s,h_{s+1},\ldots, h_{s+t})\in H^{s+t}$, $\tuple{x}'= (x_1,\ldots, x_s,1,\ldots, 1)\in c_\lambda(H)^{s+t}$, $\tuple{m}'= (m_1,\ldots, m_s,1,\ldots, 1)\in M^{s+t}$ and $\tuple{y}'= (y_1,\ldots, y_s,1,\ldots, 1)\in c_\mu(M)^{s+t}$.  Then $c_\lambda(\tuple{h}')= \tuple{x}'$ and $c_\mu(\tuple{m}')= \tuple{y}'$; moreover, $\alpha_H({\mathcal G}(\tuple{h}'))= H/R_u(H)$ by construction.
 
 To finish, it is enough to show that $\pi_G(\tuple{x}')= \pi_G(\tuple{y}')$.  As $\pi_G(\tuple{x})= \pi_G(\tuple{y})$ and ${\mathcal G}(\tuple{x})= c_\lambda(H)$ is $G$-cr, there exists $\nu\in Y(G)$ such that ${\mathcal G}(\tuple{y})\leq P_\nu$ and $c_\nu(\tuple{y})$ is conjugate to $\tuple{x}$.  It is then immediate that ${\mathcal G}(\tuple{y}')\leq P_\nu$ and $c_\nu(\tuple{y}')$ is conjugate to $\tuple{x}'$.  Hence $\pi_G(\tuple{x}')= \pi_G(\tuple{y}')$, as required.
\end{proof}

\begin{lem}
\label{lem:quotient}
 Let $H,M\leq G$.  Suppose that $H$ is $G$-cr.  Then $G\cdot H\preceq {\mathcal D}(M)$ if and only if $G\cdot H\preceq G\cdot M$ if and only if there exist $\lambda\in Y(G)$ and $M_1\leq P_\lambda\cap M$ such that $c_\lambda(M_1)$ is conjugate to $H$.
\end{lem}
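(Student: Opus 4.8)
The plan is to label the three conditions as (i) $G\cdot H\preceq {\mathcal D}(M)$, (ii) $G\cdot H\preceq G\cdot M$, and (iii) the existence of $\lambda\in Y(G)$ and $M_1\leq P_\lambda\cap M$ with $c_\lambda(M_1)$ conjugate to $H$. I would deduce (i)$\iff$(ii) at once from Lemma~\ref{lem:preccrit}: since $H$ is $G$-cr we have ${\mathcal D}(H)= G\cdot H$, so Lemma~\ref{lem:preccrit} gives $G\cdot H\preceq G\cdot M\iff {\mathcal D}(H)\preceq {\mathcal D}(M)\iff G\cdot H\preceq {\mathcal D}(M)$. It then remains to prove (ii)$\iff$(iii). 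Throughout I will use that $H$, being $G$-cr, is reductive, so $R_u(H)= 1$ and the requirement ``$\alpha_H({\mathcal G}(\tuple{h}))= H/R_u(H)$'' in Definition~\ref{defn:preceq} reads simply ``${\mathcal G}(\tuple{h})= H$''.

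For (ii)$\Rightarrow$(iii): unwinding Definition~\ref{defn:preceq} gives $s\in \NN$, $\tuple{h}\in H^s$ and $\tuple{m}\in M^s$ with ${\mathcal G}(\tuple{h})= H$ and $\pi_G(\tuple{m})= \pi_G(\tuple{h})$. As ${\mathcal G}(\tuple{h})= H$ is $G$-cr, $G\cdot \tuple{h}$ is closed. Choose $\lambda\in Y(G)$ with $P_\lambda$ minimal among the R-parabolic subgroups of $G$ containing ${\mathcal G}(\tuple{m})$; then $\tuple{m}\in P_\lambda^s$, and $c_\lambda({\mathcal G}(\tuple{m}))= {\mathcal G}(c_\lambda(\tuple{m}))$ is $G$-cr, so $G\cdot c_\lambda(\tuple{m})$ is closed. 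The description of the fibres of $\pi_G$ recalled in Section~\ref{sec:Gcr} (cf.\ \cite[Cor.\ 3.5.2]{New}) now forces $c_\lambda(\tuple{m})$ to be conjugate to $\tuple{h}$, say $c_\lambda(\tuple{m})= g\cdot \tuple{h}$ with $g\in G$. Setting $M_1= {\mathcal G}(\tuple{m})\leq P_\lambda\cap M$, we get $c_\lambda(M_1)= {\mathcal G}(c_\lambda(\tuple{m}))= {\mathcal G}(g\cdot \tuple{h})= gHg^{-1}$, which is (iii).

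For (iii)$\Rightarrow$(ii): let $g\in G$ with $c_\lambda(M_1)= gHg^{-1}$. Then $c_\lambda(M_1)$ is reductive, so by Proposition~\ref{prop:topfg} (here I use that $k$ is solid) there exist $s\in \NN$ and $\tuple{y}\in c_\lambda(M_1)^s$ with ${\mathcal G}(\tuple{y})= c_\lambda(M_1)$. Since $c_\lambda\colon M_1^s\ra c_\lambda(M_1)^s$ is surjective, choose $\tuple{m}\in M_1^s\sse M^s$ with $c_\lambda(\tuple{m})= \tuple{y}$; then $\pi_G(\tuple{m})= \pi_G(\tuple{y})$. Put $\tuple{h}= g^{-1}\cdot \tuple{y}\in H^s$, so ${\mathcal G}(\tuple{h})= g^{-1}{\mathcal G}(\tuple{y})g= g^{-1}c_\lambda(M_1)g= H$ and $\pi_G(\tuple{h})= \pi_G(\tuple{y})= \pi_G(\tuple{m})$. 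Hence $G\cdot H\preceq G\cdot M$, as wanted.

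The main thing to watch is the step in (ii)$\Rightarrow$(iii) where, from the single equality $\pi_G(\tuple{m})= \pi_G(\tuple{h})$ together with the closedness of $G\cdot \tuple{h}$, one extracts a cocharacter $\lambda$ with $\tuple{m}\in P_\lambda^s$ and $c_\lambda(\tuple{m})$ conjugate to $\tuple{h}$; this is precisely the GIT-theoretic move already used in the proof of Lemma~\ref{lem:preccrit}, so it should cause no trouble. The remaining ingredients --- topological finite generation of reductive groups and surjectivity of the maps $c_\lambda$ --- enter routinely.
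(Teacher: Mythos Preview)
Your proof is correct and follows essentially the same route as the paper's. The only cosmetic differences are that in (ii)$\Rightarrow$(iii) you make the choice of $\lambda$ explicit by taking $P_\lambda$ minimal containing ${\mathcal G}(\tuple{m})$, whereas the paper simply asserts the existence of a suitable $\lambda$ from the same GIT fact; and in (iii)$\Rightarrow$(ii) you first pick a generating tuple for $c_\lambda(M_1)$ and conjugate it into $H$, while the paper first picks a generating tuple for $H$ and then lifts an appropriate conjugate through $c_\lambda$.
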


\begin{proof}
 The first equivalence follows from Lemma~\ref{lem:preccrit}.  We prove the second equivalence.  As $H$ is $G$-cr, $H$ is reductive.  Suppose $G\cdot H\preceq G\cdot M$.  There exist $s\in {\mathbb N}$, $\tuple{h}\in H^s$ and $\tuple{m}\in M^s$ such that ${\mathcal G}(\tuple{h})= H$ and $\pi_G(\tuple{m})= \pi_G(\tuple{h})$.  Set $M_1= {\mathcal G}(\tuple{m})$.  As $H= {\mathcal G}(\tuple{h})$ is $G$-cr, there exist $\lambda\in Y(G)$ and $g\in G$ such that $M_1\leq P_\lambda$ and $c_\lambda(\tuple{m})= g\cdot\tuple{h}$.  Then $c_\lambda(M_1)= c_\lambda({\mathcal G}(\tuple{m}))= {\mathcal G}(c_\lambda(\tuple{m}))= {\mathcal G}(g\cdot \tuple{h})= g{\mathcal G}(\tuple{h})g^{-1}= gHg^{-1}$, as required.
 
 Conversely, suppose there exist $\lambda\in Y(G)$ and $M_1\leq P_\lambda\cap M$ such that $c_\lambda(M_1)$ is conjugate to $H$.  Pick $s\geq \kappa(H)+ 1$.  By Proposition~\ref{prop:topfg}, there exists $\tuple{h}\in H^s$ such that ${\mathcal G}(\tuple{h})= H$.  We can pick $\tuple{m}\in M_1^s$ such that $c_\lambda(\tuple{m})$ is conjugate to $\tuple{h}$.  Then $\pi_G(\tuple{m})= \pi_G(c_\lambda(\tuple{m}))= \pi_G(\tuple{h})$, so $G\cdot H\preceq G\cdot M$, and we are done.
\end{proof}

\begin{lem}
\label{lem:transitive}
 Let $H,M,K\leq G$.  If $G\cdot H\preceq G\cdot M$ and $G\cdot M\preceq G\cdot K$ then $G\cdot H\preceq G\cdot K$.
\end{lem}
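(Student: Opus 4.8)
The plan is to reduce to the case where $H$, $M$ and $K$ are all $G$-completely reducible and then chase generating tuples through canonical projections. First, by Lemma~\ref{lem:preccrit} the relation $G\cdot H\preceq G\cdot M$ is equivalent to ${\mathcal D}(H)\preceq {\mathcal D}(M)$, and likewise for the other two relations; since ${\mathcal D}(H)$, ${\mathcal D}(M)$, ${\mathcal D}(K)$ are conjugacy classes of $G$-cr subgroups and ${\mathcal D}$ is the identity on such classes, it suffices to prove the statement under the extra assumption that $H$, $M$ and $K$ are $G$-cr. In particular they are reductive, so $R_u(H)= R_u(M)= R_u(K)= 1$.

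Next I apply Lemma~\ref{lem:quotient} twice. Since $H$ is $G$-cr, $G\cdot H\preceq G\cdot M$ yields a cocharacter $\lambda\in Y(G)$ and a subgroup $M_1\leq P_\lambda\cap M$ with $c_\lambda(M_1)$ conjugate to $H$; after replacing $M$ (together with $M_1$ and $\lambda$) by suitable conjugates---which does not alter the conjugacy class of $M$, and hence leaves both the hypothesis $G\cdot M\preceq G\cdot K$ and the $G$-complete reducibility of $M$ untouched---I may assume $c_\lambda(M_1)= H$. Similarly, since $M$ is $G$-cr, $G\cdot M\preceq G\cdot K$ yields $\nu\in Y(G)$ and $K_1\leq P_\nu\cap K$ with $c_\nu(K_1)$ conjugate to $M$, and after replacing $K$ by a conjugate I may assume $c_\nu(K_1)= M$.

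Now I lift a generating tuple of $H$ up through both projections. Choose $s\geq \kappa(H)+ 1$; as $k$ is solid, Proposition~\ref{prop:topfg} provides $\tuple h\in H^s$ with ${\mathcal G}(\tuple h)= H$. The restriction $c_\lambda\colon M_1\ra H$ is surjective (its image is $c_\lambda(M_1)= H$), so there is $\tuple m\in M_1^s$ with $c_\lambda(\tuple m)= \tuple h$; and $c_\nu\colon K_1\ra M$ is surjective, so, since each entry of $\tuple m$ lies in $M_1\leq M$, there is $\tuple k\in K_1^s$ with $c_\nu(\tuple k)= \tuple m$. Because $\tuple m\in P_\lambda^s$ and $\tuple k\in P_\nu^s$, the identity $\pi_G(\tuple x)= \pi_G(c_\mu(\tuple x))$ valid for any $\mu\in Y(G)$ and $\tuple x\in P_\mu^s$ gives $\pi_G(\tuple k)= \pi_G(\tuple m)= \pi_G(\tuple h)$. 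Since $H$ is reductive, $\alpha_H({\mathcal G}(\tuple h))= {\mathcal G}(\tuple h)= H= H/R_u(H)$; hence $\tuple h\in H^s$ and $\tuple k\in K^s$ satisfy the requirements of Definition~\ref{defn:preceq}, witnessing $G\cdot H\preceq G\cdot K$, as desired.

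The step I expect to be the main obstacle is a conceptual one rather than a calculation: there is a natural temptation to merge $P_\lambda$ and $P_\nu$, or to compose $c_\lambda$ with $c_\nu$, into a single R-parabolic and a single canonical projection, which is delicate because $\lambda$ need not be adapted to $L_\nu$ and one would need a composition-of-cocharacters lemma. The argument above sidesteps this: one never needs a single destabilising cocharacter for $K$, only equality of images in the character variety $G^s/G$, and that equality is preserved one projection at a time. The sole serious input is solidity of $k$, used to topologically finitely generate $H$; the remaining work is the bookkeeping with the normalising conjugations and checking that each intermediate subgroup sits inside the R-parabolic it should.
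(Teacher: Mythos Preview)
Your proof is correct and follows essentially the same route as the paper's: reduce to the $G$-cr case via Lemma~\ref{lem:preccrit}, then chain together equalities $\pi_G(\tuple{k})=\pi_G(\tuple{m})=\pi_G(\tuple{h})$ by lifting through canonical projections. The only difference is that the paper applies Lemma~\ref{lem:quotient} just once (to $G\cdot M\preceq G\cdot K$, obtaining $K_1$ with $c_\lambda(K_1)=M$) and reads off $\tuple{h},\tuple{m}$ with $\pi_G(\tuple{h})=\pi_G(\tuple{m})$ directly from the definition of $G\cdot H\preceq G\cdot M$, whereas you invoke Lemma~\ref{lem:quotient} twice and lift through two projections; this is a cosmetic economy, not a different idea.
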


\begin{proof}
 Suppose $G\cdot H\preceq G\cdot M$ and $G\cdot M\preceq G\cdot K$.  By Lemma~\ref{lem:preccrit}, we can assume $H,M$ and $K$ are $G$-cr.  By Lemma~\ref{lem:quotient}, there exist $\lambda\in Y(G)$ and $K_1\leq P_\lambda\cap K$ such that $c_\lambda(K_1)$ is conjugate to $M$.  Replacing $(K,\lambda)$ with a conjugate of $(K,\lambda)$ if necessary, we can assume that $c_\lambda(K_1)= M$.  Pick $s\in {\mathbb N}$, $\tuple{h}\in H^s$ and $\tuple{m}\in M^s$ such that ${\mathcal G}(\tuple{h})= H$ and $\pi_G(\tuple{h})= \pi_G(\tuple{m})$.  There exists $\tuple{k}\in K_1^s$ such that $c_\lambda(\tuple{k})= \tuple{m}$.  Then $\pi_G(\tuple{k})= \pi_G(c_\lambda(\tuple{k}))= \pi_G(\tuple{m})= \pi_G(\tuple{h})$, so $G\cdot H\preceq G\cdot K$.
\end{proof}

If $H$ and $M$ are subgroups of $G$ and $H$ is conjugate to a subgroup of $M$ then $G\cdot H\preceq G\cdot M$ (and so ${\mathcal D}(H)\preceq {\mathcal D}(M)$ by Lemma~\ref{lem:preccrit}); in particular, $G\cdot H\preceq G\cdot H$.  For without loss we can assume that $H\leq M$, and if we take $s\geq \kappa(H/R_u(H))+ 1$ then by Proposition~\ref{prop:topfg} we can choose $\tuple{m}= \tuple{h}\in H^s$ such that $\alpha_H(\tuple{h})$ generates the reductive group $H/R_u(H)$.  The following example shows that the converse is false, even when $H$ and $M$ are $G$-cr.
 
\begin{exmp}
\label{exmp:not_subgp}
 Let ${\rm char}(k)= 2$, let $G= {\rm SL}_8(k)$ and let $M$ be ${\rm PGL}_3(k)$ embedded in $G$ via the adjoint representation on ${\rm Lie}(M)\cong k^8$.  Since ${\rm Lie}(M)$ is a simple $M$-module, $M$ is $G$-cr (in fact, $G$-ir).  It follows from elementary representation-theoretic arguments that $M$ contains exactly two subgroups of type $A_1$ up to $M$-conjugacy: the derived group $H_1$ of a Levi subgroup of a rank 1 parabolic subgroup of $M$, and the image $H_2$ of ${\rm SL}_2(k)$ under the map ${\rm SL}_2(k)\ra {\rm SL}_3(k)\ra M$, where the first arrow is the adjoint representation of ${\rm SL}_2(k)$ and the second is the canonical projection.  It is easily checked that $H_1$ is $M$-cr but $H_2$ is not: in fact, there exists $\lambda\in Y(M)$ such that $c_\lambda(H_2)= H_1$.
 
 Now $H_1$ is not $G$-cr because ${\rm Lie}(H_1)$ is an $H_1$-stable submodule of ${\rm Lie}(M)$ and $H_1$ does not act completely reducibly on ${\rm Lie}(H_1)$.  Choose $\mu\in Y(G)$ such that $H_1\leq P_\mu$ and $H:= c_\mu(H_1)$ is $G$-cr.  We have $G\cdot H_1\preceq G\cdot M$ as $H_1\leq M$, so $G\cdot H\preceq G\cdot M$ by Lemma~\ref{lem:preccrit}.  We claim that $H$ is not $G$-conjugate to a subgroup of $M$.  First, $H$ is not $G$-conjugate to $H_1$ because $H$ is $G$-cr but $H_1$ is not.  If $H$ is $G$-conjugate to $H_2$ then $H_2$ is $G$-cr, so $H_1= c_\lambda(H_2)$ is $G$-conjugate to $H_2$; but then $H$ is $G$-conjugate to $H_1$, a contradiction.  This proves the claim.
\end{exmp}
 
We do, however, have the following result.

\begin{lem}
\label{lem:antisymm}
 Let $H,M\leq G$.  If $G\cdot H\preceq G\cdot M$ and $G\cdot M\preceq G\cdot H$ then ${\mathcal D}(H)= {\mathcal D}(M)$.  In particular, if $H$ and $M$ are $G$-cr then $G\cdot H= G\cdot M$.
\end{lem}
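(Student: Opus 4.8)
The plan is to reduce to the $G$-completely reducible case via Lemma~\ref{lem:preccrit}, then apply Lemma~\ref{lem:quotient} twice and run a sandwich argument on dimensions and numbers of connected components. First I would observe that $G\cdot H\preceq G\cdot M$ if and only if ${\mathcal D}(H)\preceq {\mathcal D}(M)$, and likewise with the roles of $H$ and $M$ exchanged (Lemma~\ref{lem:preccrit}); so after replacing $H$ and $M$ by representatives of ${\mathcal D}(H)$ and ${\mathcal D}(M)$, I may assume that both $H$ and $M$ are $G$-cr, and the goal becomes $G\cdot H= G\cdot M$, which yields both assertions of the lemma at once. Now Lemma~\ref{lem:quotient}, which applies because $H$ and $M$ are $G$-cr, supplies $\lambda\in Y(G)$ and $M_1\leq P_\lambda\cap M$ with $c_\lambda(M_1)$ conjugate to $H$, and (applying it with $H$ and $M$ interchanged) $\mu\in Y(G)$ and $H_1\leq P_\mu\cap H$ with $c_\mu(H_1)$ conjugate to $M$.

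Next I would sandwich dimensions. Since $c_\lambda$ restricts to a surjective homomorphism $M_1\to c_\lambda(M_1)$ and $M_1\leq M$, we get $\dim H= \dim c_\lambda(M_1)\leq \dim M_1\leq \dim M$; symmetrically $\dim M= \dim c_\mu(H_1)\leq \dim H_1\leq \dim H$. Hence every inequality is an equality. In particular $\dim M_1= \dim M$, so $M^0\leq M_1$; and $\dim c_\lambda(M_1)= \dim M_1$, so the kernel $M_1\cap R_u(P_\lambda)$ of $c_\lambda|_{M_1}$ is finite. Then I would sandwich component numbers: a surjective homomorphism of algebraic groups with finite kernel cannot increase the number of connected components, so $\kappa(H)= \kappa(c_\lambda(M_1))\leq \kappa(M_1)\leq \kappa(M)$, and symmetrically $\kappa(M)\leq \kappa(H)$; therefore $\kappa(M_1)= \kappa(M)$. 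Combining $M^0\leq M_1\leq M$ with $\kappa(M_1)= \kappa(M)$ forces $M_1= M$.

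To finish: since $M= M_1\leq P_\lambda$ and $M$ is $G$-cr, the cocharacter projection $c_\lambda(M)$ is $G$-conjugate to $M$ (the standard property of $G$-cr subgroups recalled in the paragraph following Lemma~\ref{lem:countable}); but $c_\lambda(M)= c_\lambda(M_1)$ is $G$-conjugate to $H$, so $M$ is $G$-conjugate to $H$, i.e.\ $G\cdot H= G\cdot M$. The ``in particular'' clause is then immediate, since ${\mathcal D}(H)= G\cdot H$ and ${\mathcal D}(M)= G\cdot M$ whenever $H$ and $M$ are $G$-cr.

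I expect the main subtlety to be the bookkeeping with connected components: were $G$ and all the subgroups involved connected, the dimension sandwich alone would give $M_1= M$ and the argument would stop there, but in the non-connected setting one genuinely needs the finiteness of $M_1\cap R_u(P_\lambda)$ in order to control $\kappa(M_1)$ and pin down $M_1= M$. Everything else is a routine chain of inequalities resting on Lemmas~\ref{lem:preccrit} and~\ref{lem:quotient}.
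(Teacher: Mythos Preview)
Your proof is correct and follows essentially the same route as the paper: reduce to the $G$-cr case via Lemma~\ref{lem:preccrit}, apply Lemma~\ref{lem:quotient}, sandwich first on dimension and then on $\kappa$, conclude $M_1=M$, and finish using that $c_\lambda(M)$ is conjugate to $M$ when $M$ is $G$-cr. You merely make explicit (via the finite-kernel remark) the step the paper records with ``It now follows that''.
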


\begin{proof}
 By Lemma~\ref{lem:preccrit}, we can assume $H$ and $M$ are $G$-cr; in particular, $H$ and $M$ are reductive.  By Lemma~\ref{lem:quotient}, there exist $\lambda\in Y(G)$ and $M_1\leq P_\lambda\cap M$ such that $c_\lambda(M_1)$ is conjugate to $H$.  Replacing $(M,\lambda)$ with a conjugate of $(M,\lambda)$ if necessary, we can assume that $c_\lambda(M_1)= H$.  We have
 $$ {\rm dim}(H)= {\rm dim}(c_\lambda(M_1))\leq {\rm dim}(M_1)\leq {\rm dim}(M). $$
 By symmetry, ${\rm dim}(M)\leq {\rm dim}(H)$, so
 $$ {\rm dim}(H)= {\rm dim}(c_\lambda(M_1))= {\rm dim}(M_1)= {\rm dim}(M). $$
 It now follows that
  $$ \kappa(H)= \kappa(c_\lambda(M_1))\leq \kappa(M_1)\leq \kappa(M). $$
  By symmetry, $\kappa(M)\leq \kappa(H)$, so
  $$ \kappa(H)= \kappa(c_\lambda(M_1))= \kappa(M_1)= \kappa(M). $$
  This implies that $M_1= M$ since $M_1\leq M$, so $H= c_\lambda(M)$.  But $M$ is $G$-cr, so $M$ is conjugate to $H$.  This completes the proof.
\end{proof}

The next result follows immediately from Lemmas~\ref{lem:transitive} and \ref{lem:antisymm}.

\begin{cor}
\label{cor:po}
 The relation $\preceq$ is a partial order on ${\mathcal C}(G)_{\rm cr}$.
\end{cor}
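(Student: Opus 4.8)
The plan is to verify directly the three defining properties of a partial order on ${\mathcal C}(G)_{\rm cr}$: reflexivity, transitivity, and antisymmetry. Note first that by Lemma~\ref{lem:preccrit} the relation $\preceq$ is independent of the choice of representatives within a conjugacy class, so it indeed makes sense to regard it as a relation on ${\mathcal C}(G)_{\rm cr}$.

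For transitivity there is nothing to do: this is precisely Lemma~\ref{lem:transitive}, applied to $G$-cr subgroups $H,M,K$. For antisymmetry, I would take $H$ and $M$ to be $G$-cr subgroups with $G\cdot H\preceq G\cdot M$ and $G\cdot M\preceq G\cdot H$, and simply invoke the final assertion of Lemma~\ref{lem:antisymm}, which yields $G\cdot H= G\cdot M$ directly; equivalently, one uses the first assertion ${\mathcal D}(H)= {\mathcal D}(M)$ together with the fact recorded in Section~\ref{sec:Gcr} that ${\mathcal D}(H)= G\cdot H$ whenever $H$ is $G$-cr.

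The one point not already packaged as a lemma is reflexivity, i.e.\ $G\cdot H\preceq G\cdot H$ for every $G$-cr subgroup $H$; but this was observed in the paragraph preceding Example~\ref{exmp:not_subgp}. Concretely, since a $G$-cr subgroup is reductive we have $R_u(H)= 1$, so choosing $s\geq \kappa(H)+ 1$ and applying Proposition~\ref{prop:topfg} produces $\tuple{h}\in H^s$ with ${\mathcal G}(\tuple{h})= H= H/R_u(H)$, and then taking $\tuple{m}:= \tuple{h}$ witnesses $G\cdot H\preceq G\cdot H$ in the sense of Definition~\ref{defn:preceq}. (This uses the standing assumption of the section that $k$ is solid.) Since all three steps are immediate, I do not expect any obstacle in assembling the corollary: the real content has already been absorbed into the preceding lemmas, in particular the dimension- and component-counting argument behind Lemma~\ref{lem:antisymm}, and the corollary is a bookkeeping consequence of Lemmas~\ref{lem:transitive} and \ref{lem:antisymm} together with the earlier remark on reflexivity.
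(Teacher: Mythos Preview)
Your proposal is correct and matches the paper's approach: the paper simply states that the corollary is immediate from Lemmas~\ref{lem:transitive} and \ref{lem:antisymm}, and your write-up just makes the reflexivity step explicit, invoking exactly the observation recorded before Example~\ref{exmp:not_subgp}.
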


\begin{rem}
\label{rem:dcc}
 The proof of Lemma~\ref{lem:antisymm} shows that if $H$ and $M$ are $G$-cr subgroups of $G$ and $G\cdot H\prec G\cdot M$ then either ${\rm dim}(H)< {\rm dim}(M)$, or ${\rm dim}(H)= {\rm dim}(M)$ and $\kappa(H)< \kappa(M)$.  It follows that ${\mathcal C}(G)_{\rm cr}$ satisfies the descending chain condition with respect to $\preceq$. 
\end{rem}

Given a reductive subgroup $H$ of $G$, set $S(H)= \{\tuple{g}\in G^N\mid \pi_G(\tuple{g})\in \Psi^G_H(H^N/H)\}$.  Theorem~\ref{thm:finiteness} implies that $S(H)$ is closed.

\begin{lem}
\label{lem:Psiimage}
 Let $\tuple{g}\in G^N$ and let $H\leq G$ be reductive.  Then $\tuple{g}\in S(H)$ if and only if $G\cdot {\mathcal G}(\tuple{g})\preceq G\cdot H$ if and only if ${\mathcal D}({\mathcal G}(\tuple{g}))\preceq {\mathcal D}(H)$.
\end{lem}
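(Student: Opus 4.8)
The plan is to reduce the first ``if and only if'' to the definitions of $S(H)$ and $\preceq$ together with Lemma~\ref{lem:quotient}, and to derive the second ``if and only if'' directly from Lemma~\ref{lem:preccrit}. First I would record that, since $\Psi^G_H(\pi_H(\tuple{h}))= \pi_G(\tuple{h})$ for $\tuple{h}\in H^N$, the condition $\tuple{g}\in S(H)$ says precisely that $\pi_G(\tuple{g})= \pi_G(\tuple{h})$ for some $\tuple{h}\in H^N$. Granting this reformulation, the equivalence of $G\cdot{\mathcal G}(\tuple{g})\preceq G\cdot H$ and ${\mathcal D}({\mathcal G}(\tuple{g}))\preceq{\mathcal D}(H)$ is immediate from Lemma~\ref{lem:preccrit} applied to the pair $({\mathcal G}(\tuple{g}),H)$, so all the work is in the first equivalence.

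For the easy direction, suppose $\tuple{g}\in S(H)$ and write $M= {\mathcal G}(\tuple{g})$. Choosing $\tuple{h}\in H^N$ with $\pi_G(\tuple{h})= \pi_G(\tuple{g})$, I can verify Definition~\ref{defn:preceq} with $s= N$, with the tuple $\tuple{g}\in M^N$, and with $\tuple{h}\in H^N$: indeed ${\mathcal G}(\tuple{g})= M$ gives $\alpha_M({\mathcal G}(\tuple{g}))= M/R_u(M)$, while $\pi_G(\tuple{g})= \pi_G(\tuple{h})$ by choice. Hence $G\cdot M\preceq G\cdot H$.

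The content is in the converse. Assume $G\cdot M\preceq G\cdot H$ with $M= {\mathcal G}(\tuple{g})$. Since $M$ need not be $G$-cr, I would first replace it: pick $\lambda\in Y(G)$ with $M\leq P_\lambda$ and $M^*:= c_\lambda(M)$ $G$-cr, as in the construction of ${\mathcal D}(M)$. Then ${\mathcal G}(c_\lambda(\tuple{g}))= c_\lambda({\mathcal G}(\tuple{g}))= M^*$ and $\pi_G(\tuple{g})= \pi_G(c_\lambda(\tuple{g}))$ (both identities from Section~\ref{sec:Gcr}, valid as $\tuple{g}\in P_\lambda^N$), so it suffices to find $\tuple{h}'\in H^N$ with $\pi_G(\tuple{h}')= \pi_G(c_\lambda(\tuple{g}))$. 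Because ${\mathcal D}(M)= G\cdot M^*$, Lemma~\ref{lem:preccrit} turns $G\cdot M\preceq G\cdot H$ into $G\cdot M^*\preceq G\cdot H$; now Lemma~\ref{lem:quotient}, applied with the $G$-cr subgroup $M^*$ in the role of ``$H$'' and $H$ in the role of ``$M$'', yields $\nu\in Y(G)$ and $H_1\leq P_\nu\cap H$ with $c_\nu(H_1)$ conjugate to $M^*$. Conjugating $H$ (together with $\nu$ and $H_1$) does not change $S(H)$, since $\pi_G$ is constant on $G$-orbits, so I may assume $c_\nu(H_1)= M^*$. As $c_\nu\colon H_1^N\ra (M^*)^N$ is then surjective, I can choose $\tuple{h}'\in H_1^N\subseteq H^N$ with $c_\nu(\tuple{h}')= c_\lambda(\tuple{g})$; since $\tuple{h}'\in P_\nu^N$ this gives $\pi_G(\tuple{h}')= \pi_G(c_\nu(\tuple{h}'))= \pi_G(c_\lambda(\tuple{g}))= \pi_G(\tuple{g})$, so $\tuple{g}\in S(H)$.

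I expect the main obstacle to be the bookkeeping in the converse: identifying the correct cocharacter, keeping the translation between subgroups and generating tuples consistent under the passage from $M$ to $M^*$, and noting that $S(H)$ is invariant under $G$-conjugacy of $H$ so that the conjugation step is harmless. None of this is deep, but it must be done carefully; the substantive input is Lemma~\ref{lem:quotient}, which is where $G$-complete reducibility really enters.
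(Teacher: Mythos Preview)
Your proposal is correct and follows essentially the same route as the paper: both directions of the first equivalence are handled the same way, and the second equivalence is reduced to Lemma~\ref{lem:preccrit} in both. The only difference is a small streamlining in the converse: the paper first replaces $H$ by $c_\mu(H)$ for a suitable $\mu$ (so both subgroups are $G$-cr), applies Lemma~\ref{lem:quotient} to the pair $(c_\nu(M),c_\mu(H))$ to obtain $K\leq c_\mu(H)$, and then lifts $K$ back into $H$ via $c_\mu$; you observe that Lemma~\ref{lem:quotient} only needs the \emph{first} argument to be $G$-cr, so you apply it directly to $(M^*,H)$ and avoid that extra lift. Your remark that $S(H)$ is invariant under conjugating $H$ is correct and makes the normalisation $c_\nu(H_1)=M^*$ harmless (alternatively one can simply carry the conjugating element through the final chain of equalities, as the paper effectively does).
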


\begin{proof}
 We prove the first equivalence.  If $\tuple{g}\in S(H)$ then there exists $\tuple{h}\in H^N$ such that $\pi_G(\tuple{h})= \pi_G(\tuple{g})$, so $G\cdot {\mathcal G}(\tuple{g})\preceq G\cdot H$ as $\tuple{g}$ generates ${\mathcal G}(\tuple{g})$.  Conversely, suppose $G\cdot {\mathcal G}(\tuple{g})\preceq G\cdot H$.  Set $M= {\mathcal G}(\tuple{g})$.  Then ${\mathcal D}(M)\preceq {\mathcal D}(H)$ by Lemma~\ref{lem:preccrit}.  Choose $\mu\in Y(G)$ such that $H\leq P_\mu$ and $c_\mu(H)$ is $G$-cr.    Choose $\nu\in Y(G)$ such that $M\leq P_\nu$ and $c_\nu(M)$ is $G$-cr.  Then ${\mathcal D}(H)= G\cdot c_\mu(H)$ and ${\mathcal D}(M)= G\cdot c_\nu(M)$.  By Lemma~\ref{lem:quotient}, there exist $K\leq c_\mu(H)$ and $\lambda\in Y(G)$ such that $G\cdot c_\lambda(K)= G\cdot c_\nu(M)$.  There exists $\tuple{k}\in K^N$ such that $G\cdot c_\lambda(\tuple{k})= G\cdot c_\nu(\tuple{g})$.  There exists $\tuple{h}\in H^N$ such that $c_\mu(\tuple{h})= \tuple{k}$.  We have $\pi_G(\tuple{h})= \pi_G(c_\mu(\tuple{h}))= \pi_G(\tuple{k})= \pi_G(c_\lambda(\tuple{k}))= \pi_G(c_\nu(\tuple{g}))= \pi_G(\tuple{g})$, so $\tuple{g}\in S(H)$, as required.
 
 The second equivalence follows from Lemma~\ref{lem:preccrit}.
\end{proof}

To prove our results in Section~\ref{sec:genericstab}, we need to investigate the behaviour of the relation $\preceq$ under field extensions.  We assume for the rest of the section that $N\geq \Theta+ 1$.  Fix a $G$-cr subgroup $H$ of $G$ such that $N\geq \kappa(H)+ 1$.
 
\begin{defn}
 Define $B_H= \{v\in V\mid {\mathcal D}(G_v)= G\cdot H\}$.
\end{defn}
 
 Let $v\in V$.  For all $\tuple{g}\in G_v^N$, we have ${\mathcal G}(\tuple{g})\leq G_v$, and so ${\mathcal D}({\mathcal G}(\tuple{g}))\preceq {\mathcal D}(G_v)$.  Moreover, since  $N\geq \Theta+ 1\geq \kappa(G_v)+1\geq \kappa(G_v/R_u(G_v))+1$, there exists $\tuple{g'}\in G^N$ such that $\alpha_{G_v}({\mathcal G}(\tuple{g'}))= G_v/R_u(G_v)$ by Proposition~\ref{prop:topfg}, so ${\mathcal D}({\mathcal G}(\tuple{g'}))= {\mathcal D}(G_v)$.  Lemma~\ref{lem:transitive} now implies that ${\mathcal D}(G_v)\preceq G\cdot H$ if and only if ${\mathcal D}({\mathcal G}(\tuple{g}))\preceq {\mathcal D}(H)$ for all $\tuple{g}\in G_v^N$ if and only if $\pi_G(\tuple{g})\in S(H)$ for all $\tuple{g}\in G_v^N$, where the last equivalence follows from Lemma~\ref{lem:Psiimage}.  This is the case if and only if the following formula holds:
 \begin{equation}
 \label{eqn:leq}
  (\forall \tuple{g}\in G_v^N) \ (\exists  \tuple{h}\in H^N) \ \pi_G(\tuple{h})= \pi_G(\tuple{g}).
 \end{equation}
Conversely, $G\cdot H\preceq {\mathcal D}(G_v)$ if and only if there exist $M_1\leq G_v$ and $\lambda\in Y(G)$ such that $M_1\leq P_\lambda$ and $c_\lambda(M_1)$ is conjugate to $H$ (Lemma~\ref{lem:quotient}).  This is the case if and only if the following formula holds:
 \begin{equation}
 \label{eqn:geq}
  (\exists \tuple{g}\in G_v^N) \ (\exists g\in G) \ \pi_G(\tuple{g})= g\cdot \tuple{h_0},
 \end{equation}
where $\tuple{h_0}$ is a fixed element of $H^N$ such that ${\mathcal G}(\tuple{h}_0)= H$.  For, given $ \tuple{g}\in G_v^N$ and $g\in G$ such that $\pi_G(\tuple{g})= g\cdot \tuple{h_0}$, we set $M_1= {\mathcal G}(\tuple{g})$; conversely, given $M_1\leq G_v$ and $\lambda\in Y(G)$ such that $M_1\leq P_\lambda$ and $g\in G$ such that $c_\lambda(M_1)= gHg^{-1}$, we choose $\tuple{g}\in M_1^N$ such that $c_\lambda(\tuple{g})= g\cdot \tuple{h_0}$.

We summarise the above argument as follows.

\begin{lem}
\label{lem:constructible}
 Let $H$ be a $G$-cr subgroup of $G$ such that $N\geq \kappa(H)+ 1$.  Then $B_H\subseteq V$ is the set of solutions to the formulas Eqn.\ (\ref{eqn:leq}) and Eqn.\ (\ref{eqn:geq}).  In particular, $B_H$ is constructible.
\end{lem}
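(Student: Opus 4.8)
The plan is to read off the first assertion from the displayed discussion immediately preceding the statement, and then to deduce constructibility by applying Chevalley's theorem to the stabiliser variety $C$.

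For the first assertion, observe that both $\mathcal{D}(G_v)$ and $G\cdot H$ are conjugacy classes of $G$-cr subgroups of $G$ --- the latter because $H$ is $G$-cr by hypothesis --- hence elements of $\mathcal{C}(G)_{\rm cr}$. Since $\preceq$ is a partial order on $\mathcal{C}(G)_{\rm cr}$ by Corollary~\ref{cor:po}, the condition $v\in B_H$, i.e.\ $\mathcal{D}(G_v)=G\cdot H$, is equivalent to the conjunction of $\mathcal{D}(G_v)\preceq G\cdot H$ and $G\cdot H\preceq\mathcal{D}(G_v)$. The paragraphs before the statement establish, using $N\geq\Theta+1\geq\kappa(G_v/R_u(G_v))+1$, Proposition~\ref{prop:topfg} and the characterisations of $\preceq$ in Lemmas~\ref{lem:preccrit}, \ref{lem:transitive}, \ref{lem:quotient} and \ref{lem:Psiimage}, that $\mathcal{D}(G_v)\preceq G\cdot H$ holds precisely when $v$ satisfies Eqn.~(\ref{eqn:leq}), and $G\cdot H\preceq\mathcal{D}(G_v)$ holds precisely when $v$ satisfies Eqn.~(\ref{eqn:geq}). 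So $B_H$ is exactly the solution set of the two formulas; I would cross-reference the displayed argument rather than repeat it.

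For constructibility, I would work with the stabiliser variety $C$, which is closed in $V\times G^N$, together with its projections $\eta\colon C\to V$ and $\phi\colon C\to G^N$; by Chevalley's theorem $\eta$ carries constructible subsets of $C$ to constructible subsets of $V$, and constructible subsets of $V$ are closed under complementation and finite intersection. The set $S(H)\sse G^N$ is closed by Theorem~\ref{thm:finiteness}, so $\phi^{-1}(S(H))$ is closed in $C$; and $v$ satisfies Eqn.~(\ref{eqn:leq}) if and only if $\eta^{-1}(v)\sse\phi^{-1}(S(H))$, equivalently $v\notin\eta\bigl(C\setminus\phi^{-1}(S(H))\bigr)$. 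Since $C\setminus\phi^{-1}(S(H))$ is open in $C$, its $\eta$-image is constructible, so the $v$-locus of Eqn.~(\ref{eqn:leq}) is the complement of a constructible set. Likewise, because $H=\mathcal{G}(\tuple{h}_0)$ is $G$-cr the $\pi_G$-fibre $F\sse G^N$ through $\tuple{h}_0$ is closed, so $\phi^{-1}(F)$ is closed in $C$, and $v$ satisfies Eqn.~(\ref{eqn:geq}) if and only if $v\in\eta\bigl(\phi^{-1}(F)\bigr)$, which is constructible (if one wishes to keep the $g\in G$-quantifier explicit, replace $C$ by $C\times G$ and use the closed condition $\pi_G(\tuple{g})=\pi_G(g\cdot\tuple{h}_0)$, with the same conclusion after projecting to $V$). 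Intersecting the two loci shows $B_H$ is constructible.

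The only step that is not purely formal is handling the universal quantifier in Eqn.~(\ref{eqn:leq}): the $v$-locus of a ``for all $\tuple{g}$ in the fibre'' statement is not obviously constructible. The device above --- expressing it as non-membership of $v$ in the $\eta$-image of the locus in $C$ where the inner condition fails --- makes it work, but this hinges on that failure locus being constructible in $C$, which in turn requires $S(H)$ to be closed, i.e.\ Theorem~\ref{thm:finiteness}. A secondary point to be careful about is uniformity in $v$: reducing ``$\mathcal{D}(G_v)\preceq G\cdot H$'' to a formula that quantifies only over the single power $G_v^N$ needs $N\geq\kappa(G_v/R_u(G_v))+1$ for every $v$ simultaneously, which is precisely why the standing assumption $N\geq\Theta+1$ (with $\Theta$ supplied by Lemma~\ref{lem:bddfib}) is in force.
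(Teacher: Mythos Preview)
Your proposal is correct and follows the paper's own approach. The paper presents this lemma as a summary of the discussion immediately preceding it (``We summarise the above argument as follows''), so the first assertion is precisely what you say: antisymmetry of $\preceq$ on ${\mathcal C}(G)_{\rm cr}$ together with the displayed equivalences for Eqn.~(\ref{eqn:leq}) and Eqn.~(\ref{eqn:geq}). For the constructibility claim the paper says nothing further, relying implicitly on the standard fact that solution sets of first-order formulas over an algebraically closed field are constructible; your explicit unpacking via Chevalley's theorem applied to the projection $\eta\colon C\to V$, with $S(H)$ closed by Theorem~\ref{thm:finiteness}, is exactly how one would make that implicit step rigorous, and your handling of the universal quantifier in Eqn.~(\ref{eqn:leq}) by passing to the complement of an image is the right move.
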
 

\begin{rem}
\label{rem:nonsolid}
 It can be shown that Lemma~\ref{lem:constructible} holds for arbitrary $k$, where we take $\tuple{h}$ to be a generic tuple for $H$ in the sense of \cite[Defn.\ 5.4]{GIT}.  To do this, one replaces generating tuples with generic tuples in the definition of $\preceq$ and makes the obvious modifications to the arguments of this section.
\end{rem}

\section{Proof of Theorem~\ref{thm:genericstab}}
\label{sec:genericstab}

We assume throughout the section that $N\geq \Theta+ 1$.

\begin{proof}[Proof of Theorem~\ref{thm:genericstab}]
 We will show that there is a $G$-cr subgroup $H$ of $G$ such that $N\geq \kappa(H)+ 1$ and $B_H$ has nonempty interior.  By Lemma~\ref{lem:constructible} and Remark~\ref{rem:nonsolid}, it is enough to prove this after extending the ground field to an uncountable algebraically closed field $\Omega$ (recall from the proof of Lemma~\ref{lem:countable} that any $G(\Omega)$-cr subgroup of $G(\Omega)$ is $G(\Omega)$-conjugate to a $k$-defined $G$-cr subgroup).  Thus we can assume without loss that $k$ is uncountable (and hence solid).
 
 Let $D_1,\ldots, D_t$ be the irreducible components of $C$ such that $\ovl{\eta(G\cdot D_j)}= V$ for $1\leq j\leq t$---it follows from Lemma~\ref{lem:cpts}(b) below that there is at least one such component---and let $D_1',\ldots, D_{t'}'$ be the other irreducible components of $C$.  Let $V'= V\backslash \bigcup_{j=1}^{t'} \ovl{\eta(G\cdot D_{j}')}$.  For $1\leq j\leq t$, set $E_j= \{(v,\tuple{g})\in D_j\mid \alpha_{G_v}({\mathcal G}(\tuple{g}))= G_v/R_u(G_v)\}$; note that $E_j$ is neither closed nor open in general, and if $(v,\tuple{g})\in E_j$ then ${\mathcal D}({\mathcal G}(\tuple{g}))= {\mathcal D}(G_v)$.  For any $v\in V'$, $N\geq \Theta+ 1\geq \kappa(G_v)+1\geq \kappa(G_v/R_u(G_v))+1$, so by Proposition~\ref{prop:topfg} there exists $\tuple{g}\in G^N$ such that $\alpha_{G_v}({\mathcal G}(\tuple{g}))= G_v/R_u(G_v)$.  Then $(v,\tuple{g})\in D_j$ for some $1\leq j\leq t$, so $(v,\tuple{g})\in E_j$.  Hence $\bigcup_{1\leq j\leq t} \eta(G\cdot E_j)\supseteq V'$.  As $G$ permutes the irreducible components of $V$ transitively, $\eta(G\cdot E_m)$ is dense in $V$ for some $1\leq m\leq t$.
 
 Choose $G$-cr subgroups $H_i$ such that ${\mathcal H}:= \{H_i\mid i\in I\}$ is a set of representatives for the conjugacy classes in ${\mathcal C}(G)_{\rm cr}$; by Lemma~\ref{lem:countable}, $I$ is countable.  Let $\Lambda= \{H_i\mid G\cdot D_m\subseteq \phi^{-1}(S(H_i))\}$.  Then $G\in \Lambda$, so $\Lambda$ is nonempty.  By Remark~\ref{rem:dcc}, we can pick $H\in \Lambda$ such that $H$ is minimal with respect to $\preceq$.   We claim that $G\cdot D_j\subseteq \phi^{-1}(S(H))$ for all $1\leq j\leq t$.  To prove this, let $(v,\tuple{g})\in D_j$ such that $v\in \eta(E_m)$.  There exists $\tuple{g}'\in G_v^N$ such that $(v,\tuple{g}')\in E_m$.  Then $(v,\tuple{g}')\in \phi^{-1}(S(H))$, so $\tuple{g}'\in S(H)$.  Now ${\mathcal G}(\tuple{g})\leq G_v$, so ${\mathcal D}({\mathcal G}(\tuple{g}))\preceq {\mathcal D}(G_v)= {\mathcal D}({\mathcal G}(\tuple{g}'))\preceq G\cdot H$ by Lemma~\ref{lem:Psiimage}.  Hence $(v,\tuple{g})\in \phi^{-1}(S(H))$ by Lemma~\ref{lem:Psiimage}.  As $S(H)$ is $G$-stable, it now follows that if $(v,\tuple{g})\in D_j$ and $v\in \eta(G\cdot E_m)$ then $(v,\tuple{g})\in \phi^{-1}(S(H))$.  But $\eta^{-1}(\eta(G\cdot E_m))\cap D_j$ is dense in $D_j$ as $\eta(G\cdot E_m)$ is dense in $V$, so $D_j\subseteq \phi^{-1}(S(H))$.  As $S(H)$ is $G$-stable, $G\cdot D_j\subseteq \phi^{-1}(S(H))$, as claimed.  It follows from Lemma~\ref{lem:Psiimage} that ${\mathcal D}({\mathcal G}(\tuple{g}))\preceq G\cdot H$ for all $1\leq j\leq t$ and all $(v,\tuple{g})\in G\cdot D_j$.  In particular, for any $v\in V'$, there exist $j$ and $\tuple{g'}\in G^N$ such that $(v,\tuple{g'})\in E_j$, so ${\mathcal D}(G_v)= {\mathcal D}(\tuple{g'})\preceq G\cdot H$.
 
 To finish, we show that $B_H$ has nonempty interior in $V$.  Suppose otherwise.  As $B_H$ is constructible (Lemma~\ref{lem:constructible}), $\ovl{B_H}$ is a proper closed subset of $V$, so $V\backslash B_H$ is a $G$-stable subset with nonempty interior.  Now $\eta(\phi^{-1}(S(H))$ is dense in $V$ as it contains $\eta(G\cdot D_m)$.  Hence there is a nonempty open $G$-stable subset $O$ of $\eta(\phi^{-1}(S(H)))\cap V'$ such that $B_H\cap O$ is empty.  Let $v\in O$ and let $\tuple{g}\in G^N$ such that $(v,\tuple{g})\in D_m$.  Then ${\mathcal D}({\mathcal G}(\tuple{g}))\preceq {\mathcal D}(G_v)\preceq G\cdot H$; but $v\not\in B_H$, so ${\mathcal D}(G_v)\neq G\cdot H$, and it follows from Corollary~\ref{cor:po} that ${\mathcal D}({\mathcal G}(\tuple{g}))\prec G\cdot H$.  Hence ${\mathcal D}({\mathcal G}(\tuple{g}))= G\cdot H_i$ for some $i\in I$ such that $G\cdot H_i\prec G\cdot H$.  Lemma~\ref{lem:Psiimage} now implies that $\eta^{-1}(O)\cap D_m\subseteq \bigcup_{i\in I'} \phi^{-1}(S(H_i))$, where $I':= \{i\in I\mid G\cdot H_i\prec G\cdot H\}$.  By Corollary~\ref{cor:qcmpctirred}, there exists $i\in I'$ such that $\eta^{-1}(O)\cap D_m\subseteq \phi^{-1}(S(H_i))$.  Since $\eta^{-1}(O)\cap D_m$ is a nonempty open subset of $D_m$ and $\phi^{-1}(S(H_i))$ is closed and $G$-stable, $G\cdot D_m\subseteq \phi^{-1}(S(H_i))$.  But $G\cdot H_i\prec G\cdot H$, which contradicts the minimality of $H$.  We conclude that $B_H$ has nonempty interior in $V$ after all.  Finally, since $G\cdot H= {\mathcal D}(G_v)$ for some $v\in V$, we have $\kappa(H)\leq \kappa(G_v)\leq \Theta$, so $N\geq \kappa(H)+ 1$.  This completes the proof. 
\end{proof}

\begin{proof}[Proof of Corollaries~\ref{cor:Gcrprinc} and \ref{cor:char0princ}]
 We can assume $O$ is $G$-stable.  By Theorem~\ref{thm:genericstab}, there is a nonempty open $G$-stable subset $O'$ of $V$ and a $G$-cr subgroup $H$ of $G$ such that ${\mathcal D}(G_v)= G\cdot H$ for all $v\in O'$.  Now $O\cap O'$ is a nonempty open $G$-stable subset of $V$, and for all $v\in O\cap O'$, ${\mathcal D}(G_v)= G\cdot H$.  Since $G_v$ is $G$-cr for $v\in O\cap O'$, $G_v$ is conjugate to $H$.  It follows that $V$ has a principal stabiliser.
 
 In particular, the hypotheses of Corollary~\ref{cor:Gcrprinc} are satisfied if ${\rm char}(k)= 0$ and $V_{\rm red}$ is nonempty, since then $V_{\rm red}$ is open by Theorem~\ref{thm:main} and for all $v\in V_{\rm red}$, $G_v$---being reductive---is $G$-cr.  This proves Corollary~\ref{cor:char0princ}.
\end{proof}

\begin{rem}
\label{rem:Leviconj}
 Here is a generalisation of Corollary~\ref{cor:char0princ}.  If ${\rm char}(k)= 0$ and $O$ is as in Theorem~\ref{thm:genericstab} then $G\cdot M_v= {\mathcal D}(G_v)= G\cdot H$ for all $v\in O$, where $M_v$ is any Levi subgroup of $G_v$.
\end{rem}

\section{Irreducible components of the stabiliser variety}
\label{sec:stabcpts}

In this section we study the irreducible components of the stabiliser variety $C$.  We use the information we obtain to prove results analogous to those in Section~\ref{sec:genericstab}, but for the subgroups $G_v^0$ rather than the subgroups $G_v$.  We assume throughout the section that $N\geq 3$.

\begin{lem}
\label{lem:cpts}
 (a) Let $D$ be an irreducible component of $C$ such that $\eta(G\cdot D)$ is dense in $V$.  Then ${\rm dim}(D)= n+ Nr$ and for all $v\in V_0$, the fibre $\left(\eta|_D\right)^{-1}(v)$ either is empty or has dimension $Nr$ and is isomorphic (via $\phi$) to a union of irreducible components of $G_v^N$.
 \smallskip\\
 (b) There is a unique closed subset $\widetilde{C}$ of $C$ such that $\widetilde{C}$ contains $V\times \{{\mathbf 1}\}$, $\widetilde{C}$ is a union of irreducible components of $C$ and $G$ permutes these irreducible components transitively.  The variety $\widetilde{C}$ is the closure of the set $\{(v,{\mathbf g})\mid v\in V_0, {\mathbf g}\in (G_v^0)^N\}$, and each irreducible component of $\widetilde{C}$ has dimension $n+ Nr$.
\end{lem}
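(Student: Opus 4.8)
The plan is to understand $C$ by passing to the fibre of $\eta\colon C\to V$ over the generic point of a component of $V$. Fix an irreducible component $V_1$ of $V$ and let $\xi$ be its generic point; since $V_0$ is dense, $\xi\in V_0$, so the fibre $\mathcal G_\xi$ over $\xi$ of the stabiliser group scheme $\{(v,g)\mid v\in V,\ g\in G_v\}\to V$ is a group scheme of finite type over $k(\xi)$ of dimension $r$ (its fibres over the dense open set $V_0$ all have dimension $r$). An algebraic group over a field is equidimensional and its irreducible components are disjoint, so $\mathcal G_\xi$, and hence $C_\xi:=\eta^{-1}(\xi)$, which has the same underlying topological space as $(\mathcal G_\xi)^N$, is equidimensional of dimension $Nr$ with disjoint irreducible components. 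Moreover the irreducible components of $C$ that dominate $V_1$ correspond bijectively to the irreducible components of $C_\xi$ via $D\mapsto D_\xi$ (the generic fibre of $\eta|_D$), with inverse given by closure in $C$: this is because $D\cap\eta^{-1}(\xi)\neq\emptyset$ exactly when $\eta(D)$ is dense in $V_1$, in which case $D_\xi$ is irreducible and $D=\overline{D_\xi}$, and $C_\xi$ is the union of these $D_\xi$.

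I would first prove (a). Let $D$ be an irreducible component of $C$ with $\eta(G\cdot D)$ dense in $V$. Since $G^0$ is connected it fixes every irreducible component of $C$, so $\overline{\eta(D)}$ is $G^0$-stable; as $G$ permutes the components of $V$ transitively and $G\cdot\overline{\eta(D)}$ is a $G$-stable dense closed subset of $V$ equal to a finite union of translates of the irreducible set $\overline{\eta(D)}$, one checks that $\overline{\eta(D)}$ must itself be a component of $V$, which we call $V_1$. Then $D$ dominates $V_1$, so $D_\xi$ is a component of the equidimensional scheme $C_\xi$, whence ${\rm dim}(D_\xi)=Nr$ and ${\rm dim}(D)={\rm dim}(V_1)+{\rm dim}(D_\xi)=n+Nr$. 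For the fibre statement, let $v\in V_0$ with $(\eta|_D)^{-1}(v)$ nonempty: every irreducible component of this fibre has dimension at least ${\rm dim}(D)-n=Nr$ by the bound on fibres of dominant morphisms recalled in Section~\ref{sec:prelims}, while $(\eta|_D)^{-1}(v)\subseteq\{v\}\times G_v^N$, which is equidimensional of dimension $Nr$ because $G_v$ is an algebraic group of dimension $r$; hence each component of the fibre is an irreducible component of $\{v\}\times G_v^N$, and $\phi$ identifies $(\eta|_D)^{-1}(v)$ with a union of irreducible components of $G_v^N$.

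For (b), the unique component of $C_\xi=(\mathcal G_\xi)^N$ through $\mathbf 1$ is $((\mathcal G_\xi)^0)^N$, and it is geometrically irreducible since the identity component $(\mathcal G_\xi)^0$ is a connected group scheme over a field and hence geometrically irreducible. Let $D_1=\overline{((\mathcal G_\xi)^0)^N}\subseteq C$ be the corresponding dominating component of $C$. Because its generic fibre contains $\mathbf 1$, $D_1$ contains $V_1\times\{\mathbf 1\}$; and because that generic fibre is geometrically irreducible, the fibre $D_1\cap(\{v\}\times G_v^N)$ equals $\{v\}\times(G_v^0)^N$ for all $v$ in a dense open subset of $V_1$, so $D_1$ coincides with $\{(v,\mathbf g)\mid v\in V_0,\ \mathbf g\in(G_v^0)^N\}$ over such a subset. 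I then set $\widetilde C:=G\cdot D_1$: this is a finite union of irreducible components of $C$ (the $G$-orbit of the component $D_1$), $G$ permutes these transitively by construction, $\widetilde C$ contains $G\cdot(V_1\times\{\mathbf 1\})=V\times\{\mathbf 1\}$ because $G$ is transitive on the components of $V$, and every component of $\widetilde C$ is a translate of $D_1$, hence of dimension $n+Nr$. That $\widetilde C=\overline{\{(v,\mathbf g)\mid v\in V_0,\ \mathbf g\in(G_v^0)^N\}}$ then follows from the agreement over a dense open subset of $V_1$ together with the $G$-stability of this set. For uniqueness, if $\widetilde C'$ is closed with the listed properties, then $V_1\times\{\mathbf 1\}$ lies in some component $D'$ of $\widetilde C'$; $D'$ dominates $V_1$ and $D'_\xi$ is a component of $C_\xi$ containing $\mathbf 1$, forcing $D'_\xi=((\mathcal G_\xi)^0)^N$ and hence $D'=D_1$; transitivity of the $G$-action on the components of $\widetilde C'$ then gives $\widetilde C'=G\cdot D_1=\widetilde C$. (In particular $\widetilde C$ has at least one component dominating $V$, as used in the proof of Theorem~\ref{thm:genericstab}.)

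The main obstacle, I expect, is the generic-fibre bookkeeping in arbitrary characteristic: carefully justifying that $C_\xi$ has the same underlying space as $(\mathcal G_\xi)^N$ with $\mathcal G_\xi$ equidimensional of dimension $r$ and disjoint components, the bijection between components of $C$ dominating $V_1$ and components of $C_\xi$, and the geometric irreducibility of $((\mathcal G_\xi)^0)^N$ — one must cope with possible non-reducedness of $\mathcal G_\xi$ and imperfectness of $k(\xi)$, though each point needed is a standard fact about group schemes of finite type over a field. (Alternatively one can replace $\xi$ throughout by a generic closed point of $V_1$ together with a dense open subset of $V_0\cap V_1$ over which $\eta$ is sufficiently uniform; the generic-point formulation just packages this.)
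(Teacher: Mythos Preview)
Your proof is correct but takes a genuinely different route from the paper's. The paper avoids generic points and group schemes entirely: for part~(a) it reduces at the outset to $G$ connected and $V$ irreducible, then introduces the auxiliary morphism
\[
f\colon V\times G^N\longrightarrow V\times V^N,\qquad f(v,\mathbf{g})=(v,g_1\cdot v,\ldots,g_N\cdot v),
\]
observes that $C=f^{-1}(\Delta)$ for $\Delta$ the diagonal, and obtains the lower bound ${\rm dim}(D)\geq n+Nr$ from the general fibre-dimension inequality applied to $f$ and $\Delta$ (the fibres of $f$ being products of cosets of $G_v$). Your approach replaces this with the bijection between components of $C$ dominating $V_1$ and components of the generic fibre $C_\xi\cong(\mathcal G_\xi)^N$, reading off the dimension from the equidimensionality of a group scheme over a field. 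For part~(b) the paper stays classical: it picks any component $\widetilde C$ containing $V\times\{\mathbf 1\}$, shows via part~(a) that the fibre over each $v\in V_0$ must contain $(G_v^0)^N$, and then runs a separate dimension-counting argument on the components of the closure $C'$ of $\{(v,\mathbf g)\mid v\in V_0,\ \mathbf g\in(G_v^0)^N\}$ to force $C'=\widetilde C$. You instead identify the distinguished component directly as the closure of $((\mathcal G_\xi)^0)^N$ and use geometric irreducibility of the identity component to conclude that generic closed fibres are exactly $(G_v^0)^N$.

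Your approach is more conceptual and handles the non-connected case more transparently, at the cost of invoking standard but nontrivial facts about group schemes over imperfect fields (equidimensionality, disjointness of components, geometric irreducibility of the identity component) --- exactly the points you flag. The paper's argument is more elementary, requiring only classical dimension theory, but the auxiliary map $f$ and the second dimension argument in~(b) are less transparent. Both are valid; yours is arguably cleaner once the scheme-theoretic facts are granted.
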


\begin{proof}
 Clearly it is enough to prove the result when $G$ is connected and $V$ is irreducible, so we assume this.\smallskip
 
 \noindent (a)
 Define $f\colon V\times G^N \ra V\times V^N$ by
$$ f(v,{\mathbf g})= (v,g_1\cdot v,\ldots, g_N\cdot v). $$
Let $Y$ be the closure of the image of $f$.  Let $\Delta$ be the diagonal in $V\times V^N$; then $C= f^{-1}(\Delta)$.  The variety $Y$ is irreducible because $G$ and $V$ are irreducible.  Let $v\in V$ and let ${\mathbf g}\in G^N$.  Then $f^{-1}(v,g_1\cdot v,\ldots, g_N\cdot v)= \{v\}\times g_1G_v,\ldots, g_NG_v$.  Hence irreducible components of generic fibres of $f$ over $Y$ have dimension $Nr$.  It follows that ${\rm dim}(Y)= {\rm dim}(V\times G^N)- Nr = n+ N{\rm dim}(G)- Nr= n+ N({\rm dim}(G)- r)$.  Since $\eta(D)$ is dense in $V$, $f(D)$ is dense in $\Delta$.  Hence ${\rm dim}(D)\geq {\rm dim}(\Delta)+ Nr= n+ Nr$.

If $v\in \eta(D)\cap V_0$ and $Z$ is an irreducible component of $\left(\eta|_D\right)^{-1}(v)$ then ${\rm dim}(Z)\geq {\rm dim}(D)- {\rm dim}(V)\geq n+ Nr- n= Nr$.  But $\phi(\eta^{-1}(v))$ is a subset of $G_v^N$ and the irreducible components of $G_v^N$ all have dimension ${\rm dim}(G_v^N)= Nr$.  This forces $Z$ to be isomorphic (via $\phi$) to an irreducible component of $G_v^N$.  Hence irreducible components of generic fibres of $\eta|_D$ have dimension $Nr$, which implies that ${\rm dim}(D)= n+ Nr$.  Part (a) now follows.
\smallskip\\
 (b) Since $V\times \{{\mathbf 1}\}$ is irreducible, there is some irreducible component $\widetilde{C}$ of $C$ such that $\widetilde{C}$ contains $V\times \{{\mathbf 1}\}$.  For any $v\in V_0$, let $Z$ be an irreducible component of the fibre $\left(\eta|_{\widetilde{C}}\right)^{-1}(v)$ such that $(v,\tuple{1})\in Z$.  By part (a), ${\rm dim}(Z)= Nr$, so $Z$ is isomorphic via $\phi$ to an irreducible component of $G_v^N$.  But the only component of $G_v^N$ that contains ${\mathbf 1}$ is $(G_v^0)^N$, so $\{v\}\times (G_v^0)^N\subseteq Z$.  Hence $\widetilde{C}$ contains the closure of $\{(v,{\mathbf g})\mid v\in V_0, {\mathbf g}\in (G_v^0)^N\}$---call this closure $C'$.
 
 Let $A_1,\ldots, A_m$ be the irreducible components of $C'$ such that $\ovl{\eta(A_j)}= V$ (there is at least one, since $\eta(C')= V$).  Let $s_i= {\rm dim}(A_i)$ for $1\leq i\leq m$ and let $\eta_i\colon A_i\ra V$ be the restriction of $\eta$.  There is a nonempty open subset $U$ of $V$ such that for all $v\in U$, $\eta^{-1}(v)\subseteq A_1\cup\cdots \cup A_m$ and every irreducible component of $\eta_i^{-1}(v)$ has dimension $s_i- n$.  Since $\{v\}\times (G_v^0)^N\subseteq C'$ for all $v\in V_0$, if $v\in U\cap V_0$ then $\eta_j^{-1}(v)$ must contain $\{v\}\times (G_v^0)^N$ for some $1\leq j\leq m$, which forces $s_j\geq n+ Nr$.  But ${\rm dim}(\widetilde{C})= n+ Nr$ by part (a), so $A_j$ must be the whole of $\widetilde{C}$, so $C'= \widetilde{C}$.  This completes the proof.
\end{proof}

\begin{rem}
\label{rem:pathologies}
 The dimension inequality in Lemma~\ref{lem:cpts}(a) can fail if $\eta(G\cdot D)$ is not dense in $V$ (Example~\ref{exmp:unipotent}).  Moreover, $\widetilde{C}$ need not contain the whole of $\{(v,{\mathbf g})\mid v\in V, {\mathbf g}\in (G_v^0)^N\}$: see Examples~\ref{exmp:cosets}(a) and \ref{exmp:unipotent}.
\end{rem}

\begin{rem}
 If $G$ is connected and $V$ is irreducible then $\widetilde{C}$ is irreducible and $G$-stable.  More generally, any irreducible component of $C$ is $G$-stable in this case.
\end{rem}

\begin{defn}
 We call $\widetilde{C}$ the {\em connected-stabiliser variety} of $V$.
\end{defn}

\begin{cor}
\label{cor:finstab}
 If $r= 0$ then $\widetilde{C}= V\times \{\tuple{1}\}$.
\end{cor}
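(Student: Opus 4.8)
The plan is to read the result directly off Lemma~\ref{lem:cpts}(b). Recall that $r$ denotes $\dim(G_v)$ for $v\in V_0$. First I would observe that if $r= 0$ then $G_v$ is finite for every $v\in V_0$, so $G_v^0= \{1\}$ and hence $(G_v^0)^N= \{\tuple{1}\}$ for all such $v$. Consequently the set $\{(v,\tuple{g})\mid v\in V_0,\ \tuple{g}\in (G_v^0)^N\}$ occurring in Lemma~\ref{lem:cpts}(b) is exactly $V_0\times\{\tuple{1}\}$.

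Next I would apply Lemma~\ref{lem:cpts}(b), which identifies $\widetilde{C}$ with the closure in $C$ of that set, i.e.\ the closure of $V_0\times\{\tuple{1}\}$ inside $V\times G^N$; this closure is $\ovl{V_0}\times\{\tuple{1}\}$. Since $V_0$ is a nonempty open $G$-stable subset of $V$, it is dense (as recorded in Section~\ref{sec:prelims}), so $\ovl{V_0}= V$, giving $\widetilde{C}= V\times\{\tuple{1}\}$ as required. The reduction to $G$ connected and $V$ irreducible is not even needed here, since Lemma~\ref{lem:cpts}(b) is already stated in full generality.

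There is no real obstacle; the only point worth emphasising is that one must go through the canonical variety $\widetilde{C}$ rather than arguing about the components of $C$ directly, since $C$ itself may still carry further components — those $D'_j$ with $\overline{\eta(G\cdot D'_j)}\neq V$ — even when $r= 0$, as Example~\ref{exmp:unipotent} shows. It is precisely the description of $\widetilde{C}$ as the closure of the connected-stabiliser locus over $V_0$ that makes the computation immediate.
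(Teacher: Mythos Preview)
Your argument is correct, and it is in fact a little more direct than the paper's own proof. You invoke the second sentence of Lemma~\ref{lem:cpts}(b), namely that $\widetilde{C}$ equals the closure of $\{(v,\tuple{g})\mid v\in V_0,\ \tuple{g}\in (G_v^0)^N\}$, and then simply compute this closure when $r=0$. The paper instead uses the \emph{uniqueness} clause of Lemma~\ref{lem:cpts}(b): it first checks, via the dimension formula of Lemma~\ref{lem:cpts}(a), that the irreducible components of $V\times\{\tuple{1}\}$ are already irreducible components of $C$ (each has dimension $n= n+Nr$), and then observes that $V\times\{\tuple{1}\}$ itself satisfies the three defining properties of $\widetilde{C}$ (it contains $V\times\{\tuple{1}\}$, is a union of components of $C$, and $G$ permutes these components transitively), so uniqueness forces $\widetilde{C}= V\times\{\tuple{1}\}$. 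Your route avoids part~(a) altogether; the paper's route has the minor advantage of making explicit that $V\times\{\tuple{1}\}$ really is a union of irreducible components of $C$, which is a pleasant side observation but not logically required for the corollary.
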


\begin{proof}
 The irreducible components of $V\times \{\tuple{1}\}$ are isomorphic via $\eta$ to the irreducible components of $V$, so they are permuted transitively by $G$ and each has dimension $n$.  It follows from the dimension formula in Lemma~\ref{lem:cpts}(a) that these irreducible components are irreducible components of $C$.  The result now follows from Lemma~\ref{lem:cpts}(b).
\end{proof}

We denote by $\widetilde{\phi}\colon \widetilde{C}\ra G^N$ and $\widetilde{\eta}\colon \widetilde{C}\ra V$ the restrictions to $\widetilde{C}$ of $\phi$ and $\eta$, respectively, and if $v\in V$ then we denote $\widetilde{\phi}(\widetilde{\eta}^{-1}(v))$ by $F_v$.  If $v\in V_0$ then $(G_v^0)^N\subseteq F_v$; we do not know whether equality holds for all $v\in V_0$, or even for generic $v\in V_0$.

We now give a counterpart to Theorem~\ref{thm:genericstab}.  In the connected case, we obtain slightly more information: we can describe ${\mathcal D}(G_v^0)$ for all $v\in V_{\rm min}$  (recall the definition of $V_{\rm min}$ from Remark~\ref{rem:Vmin}).

\begin{thm}
\label{thm:genericstab_conn}
 There exists a connected $G$-completely reducible subgroup $H$ of $G$  such that:
 \begin{itemize}
  \item[(a)] for all $v\in V_{\rm min}$, ${\mathcal D}(G_v^0)= G\cdot H$.
  \item[(b)] $\widetilde{C}\subseteq \widetilde{\phi}^{-1}(S(H))$.
 \end{itemize}
 In particular, if $V_{\rm red}$ is nonempty then ${\mathcal D}(G_v^0)= G\cdot H$ for all $v\in V_{\rm red}$.
\end{thm}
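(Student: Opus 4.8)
The plan is to run the argument of the proof of Theorem~\ref{thm:genericstab} with the connected-stabiliser variety $\widetilde{C}$ in place of the stabiliser variety $C$, and with $G_v^0$ in place of $G_v$ throughout. As in the proof of Lemma~\ref{lem:cpts}, one reduces to the case where $G$ is connected and $V$ is irreducible, and, extending the ground field exactly as at the start of the proof of Theorem~\ref{thm:genericstab}, one may also assume that $k$ is uncountable (hence solid). After these reductions $\widetilde{C}$ is an irreducible, $G$-stable closed subvariety of $C$ of dimension $n+ Nr$ with $\widetilde{\eta}\colon\widetilde{C}\ra V$ dominant (Lemma~\ref{lem:cpts}), so it can serve as a substitute for the dominating component $D_m$ used in the proof of Theorem~\ref{thm:genericstab}.

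First I would record that $\widetilde{C}$ detects ${\mathcal D}(G_v^0)$ at every $v\in V_0$: since $\kappa(G_v^0)= 1$ and $N\geq 3$, Proposition~\ref{prop:topfg} produces $\tuple{g}\in (G_v^0)^N\sse F_v$ with $\alpha_{G_v^0}({\mathcal G}(\tuple{g}))= G_v^0/R_u(G_v^0)$, so that $(v,\tuple{g})\in\widetilde{C}$, ${\mathcal G}(\tuple{g})\leq G_v^0$, and hence ${\mathcal D}({\mathcal G}(\tuple{g}))= {\mathcal D}(G_v^0)$. Next, choose representatives $H_i$ ($i\in I$) for the conjugacy classes in ${\mathcal C}(G)_{\rm cr}$; the index set $I$ is countable by Lemma~\ref{lem:countable}. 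Each $\widetilde{C}\cap\widetilde{\phi}^{-1}(S(H_i))$ is closed in $\widetilde{C}$ by Theorem~\ref{thm:finiteness}, and these sets cover $\widetilde{C}$ (for any $(v,\tuple{g})\in\widetilde{C}$ one has ${\mathcal D}({\mathcal G}(\tuple{g}))= G\cdot H_i$ for some $i$, whence $\tuple{g}\in S(H_i)$ by Lemma~\ref{lem:Psiimage}), so by Corollary~\ref{cor:qcmpctirred} the set $\Lambda= \{H_i\mid \widetilde{C}\sse\widetilde{\phi}^{-1}(S(H_i))\}$ is nonempty; using the descending chain condition on $({\mathcal C}(G)_{\rm cr},\preceq)$ (Remark~\ref{rem:dcc}) one may choose $H\in\Lambda$ minimal with respect to $\preceq$. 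This $H$ satisfies conclusion~(b) by construction, and combining $\widetilde{C}\sse\widetilde{\phi}^{-1}(S(H))$ with the generating tuples found above and Lemma~\ref{lem:Psiimage} yields ${\mathcal D}(G_v^0)\preceq G\cdot H$ for every $v\in V_0$.

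It remains to get equality. The key point is that ${\mathcal D}(G_v^0)= G\cdot H$ on a nonempty open subset of $V$; granting the constructibility of $\widetilde{B}_H:= \{v\in V\mid {\mathcal D}(G_v^0)= G\cdot H\}$ (see the last paragraph), this is proved by the contradiction argument of the final paragraph of the proof of Theorem~\ref{thm:genericstab}: if $\widetilde{B}_H$ had empty interior, then on a nonempty open $G$-stable $O\sse V$ one would have ${\mathcal D}(G_v^0)\prec G\cdot H$ for all $v\in O$, and tracking the corresponding generating tuples through Lemma~\ref{lem:Psiimage} and Corollary~\ref{cor:qcmpct} would force ${\mathcal D}(G_w^0)\preceq G\cdot H_i$ on a nonempty open subset of $V$ for a single $H_i$ with $G\cdot H_i\prec G\cdot H$; a density argument would then give $\widetilde{C}\sse\widetilde{\phi}^{-1}(S(H_i))$ (this step needs the adaptations discussed in the next paragraph), contradicting the minimality of $H$. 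Hence ${\mathcal D}(G_v^0)= G\cdot H$ on a dense open $O_0\sse V$; picking $v\in O_0$ shows that $H$ is conjugate to $c_\lambda(G_v^0)$ for suitable $\lambda$, so $H$ is a connected $G$-completely reducible subgroup of $G$ with ${\rm dim}(H)= r- r'$, where $r'= \min_{w\in V_0}{\rm dim}(R_u(G_w))$. Now for an arbitrary $v\in V_{\rm min}$ we have ${\rm dim}({\mathcal D}(G_v^0))= r- {\rm dim}(R_u(G_v))= r- r'= {\rm dim}(H)$ and $\kappa({\mathcal D}(G_v^0))= 1= \kappa(H)$, so Remark~\ref{rem:dcc} forbids ${\mathcal D}(G_v^0)\prec G\cdot H$; combined with ${\mathcal D}(G_v^0)\preceq G\cdot H$ this gives ${\mathcal D}(G_v^0)= G\cdot H$, which is conclusion~(a). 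The final assertion follows at once since $V_{\rm red}= V_{\rm min}$ whenever $V_{\rm red}\neq\emptyset$ (Remark~\ref{rem:Vmin}).

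I expect the main obstacle to be the constructibility of $\widetilde{B}_H$ invoked above, together with the closely related descent step --- the connected counterparts of Lemma~\ref{lem:constructible} and of the final paragraph of the proof of Theorem~\ref{thm:genericstab}. The proof of Lemma~\ref{lem:constructible} exploits the fact that $\{(v,\tuple{g})\mid \tuple{g}\in G_v^N\}= C$ is closed, so that the defining formulas Eqn.~(\ref{eqn:leq}) and Eqn.~(\ref{eqn:geq}) describe a constructible set; in the connected setting one needs instead that $\{(v,\tuple{g})\in C\mid v\in V_0,\ \tuple{g}\in (G_v^0)^N\}$ is constructible, which is genuinely more delicate because the identity components $G_v^0$ need not vary in a visibly constructible way and $F_v$ need not equal $(G_v^0)^N$ even for $v\in V_0$. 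I would handle this by using that, over $V_0$, the morphism $\widetilde{\eta}$ is equidimensional with fibres $F_v$ each a union of irreducible components of $G_v^N$, of which $(G_v^0)^N$ is the unique one meeting the section $v\mapsto (v,\tuple{1})$; one can then express $\widetilde{B}_H\cap V_0$ as the $\widetilde{\eta}$-image of a constructible subset of $\widetilde{C}$ cut out by the evident analogues of Eqn.~(\ref{eqn:leq}) and Eqn.~(\ref{eqn:geq}) with $G_v^0$ replacing $G_v$ (legitimate because $R_u(G_v^0)= R_u(G_v)$). The same circle of ideas is what makes the descent step subtle, since $\widetilde{C}$ may contain points $(v,\tuple{g})$ with ${\mathcal G}(\tuple{g})\not\leq G_v^0$, so one cannot transport the comparison ${\mathcal D}({\mathcal G}(\tuple{g}))\preceq{\mathcal D}(G_v^0)$ from the proof of Theorem~\ref{thm:genericstab} verbatim and must instead run the quasi-compactness argument downstairs on $V$ (via Corollary~\ref{cor:qcmpct} applied to the constructible loci $\{v\in V_0\mid {\mathcal D}(G_v^0)\preceq G\cdot H_i\}$) rather than upstairs on $\widetilde{C}$.
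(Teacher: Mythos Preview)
Your approach is genuinely different from the paper's, and the obstacle you flag is precisely the reason the paper does not proceed this way.

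The paper does \emph{not} rerun the argument of Theorem~\ref{thm:genericstab} on $\widetilde{C}$. Instead it takes the $G$-cr subgroup $H'$ already produced by Theorem~\ref{thm:genericstab} and simply sets $H=(H')^0$, which is $G$-cr because it is normal in $H'$. The proof of Theorem~\ref{thm:genericstab} already shows that every irreducible component of $C$ dominating $V$ lies in $\phi^{-1}(S(H'))$, so in particular $\widetilde{C}\subseteq\widetilde{\phi}^{-1}(S(H'))$ comes for free. Then, for each individual $v\in V_{\rm min}$, the paper picks $\tuple{g}\in(G_v^0)^N$ with $\alpha_{G_v^0}({\mathcal G}(\tuple{g}))=G_v^0/R_u(G_v^0)$, finds $\tuple{h}\in(H')^N$ with $\pi_G(\tuple{h})=\pi_G(\tuple{g})$, and shows by a direct dimension and connectedness count that $c_\lambda(G_v^0)$ is conjugate to $H$. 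Part~(b) then follows by density. No new constructibility result is needed: the passage from $H'$ to $H$ and from $G_v$ to $G_v^0$ is done pointwise, not by a minimality argument over a family.

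Your route---choosing $H$ minimal in $\{H_i\mid\widetilde{C}\subseteq\widetilde{\phi}^{-1}(S(H_i))\}$ and then arguing by contradiction that $\widetilde{B}_H$ has nonempty interior---runs into exactly the problem you describe: to apply Corollary~\ref{cor:qcmpct} or~\ref{cor:qcmpctirred} you need either the loci $\{v\in V_0\mid{\mathcal D}(G_v^0)\preceq G\cdot H_i\}$ to be constructible, or you need to work upstairs on the set $A=\{(v,\tuple{g})\mid v\in V_0,\ \tuple{g}\in(G_v^0)^N\}$; but $A$ is not known to be constructible (the paper explicitly remarks that it does not know whether $F_v=(G_v^0)^N$ even for generic $v\in V_0$), and your sketch for recovering $(G_v^0)^N$ inside $F_v$ via the section $v\mapsto(v,\tuple{1})$ does not yield a constructible description without further work. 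Since $\bigcup_{i\in I'}\widetilde{\phi}^{-1}(S(H_i))$ is a countable union of closed sets, you cannot pass from $A\subseteq\bigcup_i\widetilde{\phi}^{-1}(S(H_i))$ to $\widetilde{C}=\overline{A}\subseteq\bigcup_i\widetilde{\phi}^{-1}(S(H_i))$ by taking closures. So the descent step, as you anticipated, is a genuine gap in your outline.

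That said, your endgame---using Remark~\ref{rem:dcc} to upgrade ${\mathcal D}(G_v^0)\preceq G\cdot H$ to equality on all of $V_{\rm min}$ once you know $\dim H=r-r'$ and $H$ is connected---is clean and correct, and is essentially the same dimension comparison the paper performs, just packaged differently. The moral is that you can avoid the constructibility headache entirely by borrowing $H'$ from Theorem~\ref{thm:genericstab} rather than rediscovering $H$ from scratch on $\widetilde{C}$.
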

  
\begin{proof}
 By Theorem~\ref{thm:genericstab}, there exist a $G$-cr subgroup $H'$ of $G$ and a $G$-stable open subset $O$ of $V$ such that ${\mathcal D}(G_v)= G\cdot H'$ for all $v\in O$.  Set $H= (H')^0$; then $H$ is $G$-cr as $H\unlhd H'$.   Let $t$ be the minimal dimension of ${\rm dim}(R_u(G_v))$ for $v\in V_0$.  The $G$-stable open sets $O$ and $V_{\rm min}$ have nonempty intersection, so there exists $v\in V_{\rm min}\cap O$ such that ${\mathcal D}(G_v)= G\cdot H'$ and ${\rm dim}(R_u(G_v))= t$.  This yields ${\rm dim}(H')= {\rm dim}(G_v)- {\rm dim}(R_u(G_v))= r- t$.
 
  By the proof of Theorem~\ref{thm:genericstab}, $\widetilde{C}\subseteq \widetilde{\phi}^{-1}(S(H'))$.  Let $v\in V_{\rm min}$ and choose $\lambda\in Y(G)$ such that ${\mathcal D}(G_v)= c_\lambda(G_v)$.  Then $c_\lambda(G_v)$ is $G$-cr and $c_\lambda(G_v^0)= c_\lambda(G_v)^0$ is a normal subgroup of $c_\lambda(G_v)$, so $c_\lambda(G_v^0)$ is $G$-cr.  It follows that ${\mathcal D}(G_v^0)= G\cdot c_\lambda(G_v^0)$.  We want to prove that $c_\lambda(G_v^0)$ is conjugate to $H$: that is, we want to prove that
 \begin{equation}
 \label{eqn:conj_conn}
  (\exists m\in G) \ [(\forall g\in G_v^0) \ c_\lambda(g)\in mHm^{-1} \wedge (\forall h\in H) \ (\exists g\in G_v^0) \ c_\lambda(g)= mhm^{-1}].
 \end{equation}
 Since (\ref{eqn:conj_conn}) is a first-order formula, this is a constructible condition.  Hence it is enough to prove that it holds after extending $k$ to any larger algebraically closed field.  So without loss of generality we assume $k$ is solid.
 
 By Proposition~\ref{prop:topfg}, we can choose $\tuple{g'}\in (G_v^0)^N$ such that $\alpha_{G_v^0}({\mathcal G}(\tuple{g'}))= G_v^0/R_u(G_v^0)$.  There exists $\tuple{h}\in (H')^N$ such that $\pi_G(\tuple{h})= \pi_G(\tuple{g'})$.  Let $K= {\mathcal G}(\tuple{h})$.  Now $c_\lambda({\mathcal G}(\tuple{g'}))= c_\lambda(G_v^0)$ is $G$-cr, so there exists $\mu\in Y(G)$ such that $c_\mu(\tuple{h})$ is conjugate to $c_\lambda(\tuple{g'})$.  Then $c_\lambda({\mathcal G}(\tuple{g'}))$ is conjugate to $c_\mu({\mathcal G}(\tuple{h}))$.  But ${\rm dim}(c_\lambda({\mathcal G}(\tuple{g'})))= {\rm dim}(c_\lambda(G_v^0))= {\rm dim}(H)\geq {\rm dim}(K)\geq {\rm dim}(c_\mu(K))= {\rm dim}(c_\mu({\mathcal G}(\tuple{h})))$, which forces ${\rm dim}(K)$ to equal ${\rm dim}(H)$.  Hence $K\supseteq H$.  Now $c_\mu(K)$ is conjugate to $c_\lambda(G_v^0)$, which is connected, so $c_\mu(K)= c_\mu(H)$.  But $c_\mu(H)$ is conjugate to $H$ since $H$ is $G$-cr, so we deduce that $c_\lambda(G_v^0)$ is conjugate to $H$.  Hence ${\mathcal D}(G_v^0)= G\cdot H$.  This proves part (a).  Moreover, if $\tuple{g}\in (G_v^0)^N$ then $c_\lambda({\mathcal G}(\tuple{g}))= {\mathcal G}(c_\lambda(\tuple{g}))$ is conjugate to a subgroup of $H$, so there exists $\tuple{h}\in H^N$ such that $c_\lambda(\tuple{g})$ is conjugate to $\tuple{h}$; hence $(v,\tuple{g})\in \widetilde{\phi}^{-1}(S(H))$.  As $\{(v,\tuple{g})\mid v\in V_{\rm min}, \tuple{g}\in (G_v^0)^N\}$ is dense in $\widetilde{C}$ by Lemma~\ref{lem:cpts}(b) and Remark~\ref{rem:Vmin}, $\widetilde{C}\subseteq \widetilde{\phi}^{-1}(S(H))$.  This proves part (b).
\end{proof}

The next result is the counterpart to Corollaries~\ref{cor:Gcrprinc} and \ref{cor:char0princ}.  We omit the proof, which is similar.

\begin{cor}
\label{cor:Gcrprinc_conn}
 Suppose there is a nonempty open subset $O$ of $V$ such that $G_v^0$ is $G$-cr for all $v\in O$ (in particular, this condition holds if ${\rm char}(k)= 0$ and $V_{\rm red}$ is nonempty)).  Let $H$ be the connected $G$-cr subgroup from Theorem~\ref{thm:genericstab_conn}.  Then $G_v^0$ is conjugate to $H$ for all $v\in V_{\rm red}$.
\end{cor}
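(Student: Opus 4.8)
The plan is to prove a statement stronger than asked: under the hypothesis, $G_v^0$ is $G$-cr for \emph{every} $v\in V_{\rm red}$. Granting this, the corollary follows at once, since then ${\mathcal D}(G_v^0)= G\cdot G_v^0$, while ${\mathcal D}(G_v^0)= G\cdot H$ by Theorem~\ref{thm:genericstab_conn}, so $G_v^0$ is conjugate to $H$. We may assume $V_{\rm red}$ is nonempty (otherwise there is nothing to prove); then $V_{\rm red}= V_{\rm min}$ is open by Theorem~\ref{thm:main} and Remark~\ref{rem:Vmin}, and $G_v^0$ is reductive for all $v\in V_{\rm red}$. As in the proofs of Lemma~\ref{lem:cpts} and Theorem~\ref{thm:genericstab}, we reduce to the case $G$ connected and $V$ irreducible and may assume $k$ is solid; we also take $O$ to be $G$-stable. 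Let $H$ be the subgroup supplied by Theorem~\ref{thm:genericstab_conn}. (The parenthetical ``in particular'' is routine: if ${\rm char}(k)= 0$ and $V_{\rm red}$ is nonempty then $G_v$ is reductive, hence $G$-cr, for all $v\in V_{\rm red}$, so $G_v^0$, being normal in $G_v$, is $G$-cr, and one may take $O= V_{\rm red}$.)

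First I would reduce to an inequality of centraliser dimensions. Fix $v\in V_{\rm red}$ and pick $\lambda\in Y(G)$ with $P_\lambda$ minimal amongst the R-parabolic subgroups containing $G_v^0$; then $c_\lambda(G_v^0)$ is $G$-cr and $G\cdot c_\lambda(G_v^0)= {\mathcal D}(G_v^0)= G\cdot H$, so $c_\lambda(G_v^0)$ is conjugate to $H$. By \cite[Thm.\ 5.8(ii)]{GIT}, ${\rm dim}(C_G(G_v^0))\leq {\rm dim}(C_G(c_\lambda(G_v^0)))= {\rm dim}(C_G(H))$, with equality if and only if $G_v^0$ is $G$-cr. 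Hence it is enough to show that ${\rm dim}(C_G(G_v^0))\geq {\rm dim}(C_G(H))$ for all $v\in V_{\rm red}$.

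The key step is a semicontinuity argument on the connected-stabiliser variety $\widetilde{C}$. Since $C_G(\tuple g)= C_G({\mathcal G}(\tuple g))$ for every $\tuple g\in G^N$, pulling back the closed sets of Lemma~\ref{lem:stabdimcty} (applied to $G$ acting on $G^N$ by simultaneous conjugation) along $\widetilde{\phi}$ shows that $(v,\tuple g)\mapsto {\rm dim}(C_G(\tuple g))$ is upper semicontinuous on the irreducible variety $\widetilde{C}$; let $c_0$ be its minimum value and let $W$ be the nonempty open subset of $\widetilde{C}$ on which it is attained. The set $\{(v,\tuple g)\mid v\in V_{\rm min},\ \tuple g\in (G_v^0)^N\}$ is dense in $\widetilde{C}$ (Lemma~\ref{lem:cpts}(b), Remark~\ref{rem:Vmin}), so its intersection with the nonempty open set $\widetilde{\eta}^{-1}(O)$ is again dense in $\widetilde{C}$ and therefore meets $W$: choose $(v_1,\tuple g_1)\in W$ with $v_1\in O\cap V_{\rm red}$ and $\tuple g_1\in (G_{v_1}^0)^N$. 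For this $v_1$, $G_{v_1}^0$ is $G$-cr (since $v_1\in O$), so ${\mathcal D}(G_{v_1}^0)= G\cdot G_{v_1}^0$; but also ${\mathcal D}(G_{v_1}^0)= G\cdot H$ as $v_1\in V_{\rm red}$, so $G_{v_1}^0$ is conjugate to $H$. As ${\mathcal G}(\tuple g_1)\leq G_{v_1}^0$, we get $c_0= {\rm dim}(C_G(\tuple g_1))= {\rm dim}(C_G({\mathcal G}(\tuple g_1)))\geq {\rm dim}(C_G(G_{v_1}^0))= {\rm dim}(C_G(H))$. Finally, for an arbitrary $v\in V_{\rm red}$, Proposition~\ref{prop:topfg} gives $\tuple g\in (G_v^0)^N$ with ${\mathcal G}(\tuple g)= G_v^0$, and $(v,\tuple g)\in \widetilde{C}$ by Lemma~\ref{lem:cpts}(b); hence ${\rm dim}(C_G(G_v^0))= {\rm dim}(C_G(\tuple g))\geq c_0\geq {\rm dim}(C_G(H))$. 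Combined with the reverse inequality above, this forces ${\rm dim}(C_G(G_v^0))= {\rm dim}(C_G(H))$, so $G_v^0$ is $G$-cr, which completes the proof.

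The crux — and the reason this is more than a routine variant of Corollary~\ref{cor:Gcrprinc}, where one merely intersects $O$ with the generic locus of Theorem~\ref{thm:genericstab} — is that conjugacy with $H$ is demanded on all of $V_{\rm red}$, not just generically, so one must propagate $G$-complete reducibility of $G_v^0$ off the open set $O$; the semicontinuity argument on $\widetilde{C}$ achieves this by forcing ${\rm dim}(C_G(G_v^0))$ to be constant (equal to ${\rm dim}(C_G(H))$) on $V_{\rm red}$. I expect the delicate points to be checking the density of $\widetilde{\eta}^{-1}(O)\cap\{(v,\tuple g)\mid v\in V_{\rm min},\ \tuple g\in (G_v^0)^N\}$ in $\widetilde{C}$ (and, if one prefers not to pass to $G$ connected, keeping track of the irreducible components of $\widetilde{C}$, which $G$ permutes transitively and on which ${\rm dim}(C_G(\tuple g))$ has a common minimum), together with the reduction to solid $k$, which as in Remark~\ref{rem:nonsolid} can alternatively be avoided by replacing generating tuples with generic tuples in the final step.
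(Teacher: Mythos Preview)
Your proof is correct. The paper omits the proof entirely, saying only that it is ``similar'' to the proofs of Corollaries~\ref{cor:Gcrprinc} and~\ref{cor:char0princ}; you have correctly observed that a direct imitation of those proofs would only yield conjugacy of $G_v^0$ with $H$ on the open set $O\cap V_{\rm red}$, whereas the stated conclusion demands it on \emph{all} of $V_{\rm red}$. Your semicontinuity argument on $\widetilde{C}$---bounding $\dim(C_G(G_v^0))$ from below by $\dim(C_G(H))$ and combining this with the upper bound from \cite[Thm.~5.8(ii)]{GIT} to force $G$-complete reducibility everywhere on $V_{\rm red}$---fills this gap cleanly.

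It is worth noting that your argument is essentially the contrapositive of Proposition~\ref{prop:genericnoncr_conn}, which appears immediately after this corollary in the paper and uses the same semicontinuity idea (comparing $\dim(G_{\tuple g})$ with $\dim(C_G(H))$ on $\widetilde{C}$). So your approach is entirely in line with the paper's toolkit; you have simply made explicit a step the author left to the reader. The density check you flag as delicate is fine once one reduces to $\widetilde{C}$ irreducible: the set $\{(v,\tuple g)\mid v\in V_0,\ \tuple g\in (G_v^0)^N\}$ is dense in $\widetilde{C}$ by Lemma~\ref{lem:cpts}(b), and intersecting a dense set with the dense open $\widetilde\eta^{-1}(O\cap V_{\rm min})$ keeps it dense. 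The reduction to solid $k$ (or the alternative via generic tuples) is handled just as in Remark~\ref{rem:nonsolid} and the proof of Theorem~\ref{thm:genericstab_conn}.
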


\begin{rem}
\label{rem:genericGirstab}
 Suppose there exists $v\in V_0$ such that $G_v^0$ is $G$-ir.  Then $G_v$ is $G$-cr, so $v\in V_{\rm red}$, so $V_{\rm red}$ is nonempty.  We have ${\mathcal D}(G_v^0)= G\cdot H$ by Theorem~\ref{thm:genericstab_conn}(a).  As $G_v^0$ is $G$-cr, $G\cdot G_v^0= G\cdot H$.  It follows that $H$ is $G$-ir and $G_w^0$ is conjugate to $H$ for all $w\in V_{\rm red}$; in particular, $G_w^0$ is $G$-ir for all $w\in V_{\rm red}$.  The analogous result for the full stabiliser $G_v$ is false (cf.\ Remarks~\ref{rem:notgeneric} and ~\ref{rem:genericGir}(ii), and Examples~\ref{exmp:cosets}(c) and \ref{exmp:unipotent}).  However, if $O$ is as in Theorem~\ref{thm:genericstab} and there exists $v\in O$ such that $G_v$ is $G$-ir then an argument like the one above shows that $V$ has a $G$-ir principal stabiliser.
\end{rem}

Theorem~\ref{thm:genericstab_conn} gives rise to the following counterpart to Remark~\ref{rem:Leviconj} for $G_v^0$; the proof is similar.

\begin{cor}  
 Suppose ${\rm char}(k)= 0$.  Then $H$ is conjugate to a Levi subgroup of $G_v^0$ for all $v\in V_{\rm min}$.
\end{cor}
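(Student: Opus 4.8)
The plan is to combine part~(a) of Theorem~\ref{thm:genericstab_conn} with the characteristic-zero structure theory of the operation ${\mathcal D}(-)$ recorded in Section~\ref{sec:Gcr}. The key point is that in characteristic $0$ the $G$-completely reducible degeneration of any subgroup is simply the conjugacy class of one of its Levi subgroups, so the identity ${\mathcal D}(G_v^0)= G\cdot H$ supplied by Theorem~\ref{thm:genericstab_conn}(a) translates at once into the assertion of the corollary. This is exactly the pattern of Remark~\ref{rem:Leviconj}, which does the same thing for the full stabiliser $G_v$.

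In detail, I would fix $v\in V_{\rm min}$ and argue as follows. Since ${\rm char}(k)= 0$, the connected linear algebraic group $G_v^0$ has a Levi subgroup $M_v$, so $G_v^0\cong M_v\ltimes R_u(G_v^0)$ with $M_v$ reductive; as $R_u(G_v^0)$ is connected, $M_v\cong G_v^0/R_u(G_v^0)$ is connected, and being reductive in characteristic $0$ it is $G$-completely reducible. Now pick $\lambda\in Y(G)$ with $G_v^0\leq P_\lambda$ and $c_\lambda(G_v^0)$ $G$-cr. Since $c_\lambda(G_v^0)$ is reductive we have $R_u(G_v^0)\leq R_u(P_\lambda)=\ker(c_\lambda)$, so $c_\lambda(G_v^0)= c_\lambda(M_v)$, and $c_\lambda(M_v)$ is conjugate to $M_v$ because $M_v$ is $G$-cr; hence ${\mathcal D}(G_v^0)= G\cdot c_\lambda(G_v^0)= G\cdot M_v$ (this is just the discussion of ${\mathcal D}(-)$ in characteristic $0$ from Section~\ref{sec:Gcr}, applied to $G_v^0$). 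On the other hand, Theorem~\ref{thm:genericstab_conn}(a) gives ${\mathcal D}(G_v^0)= G\cdot H$. Comparing, $G\cdot H= G\cdot M_v$, i.e.\ $H$ is $G$-conjugate to the Levi subgroup $M_v$ of $G_v^0$, as required.

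I do not expect any genuine obstacle: the substantive content is already in Theorem~\ref{thm:genericstab_conn} and in the preliminary material on ${\mathcal D}(-)$, and the proof is a short deduction. The one routine point to verify is that $M_v$ is connected, so that a conjugacy between it and the connected subgroup $H$ is consistent; as noted above this is automatic because $R_u(G_v^0)$ is connected and $M_v\cong G_v^0/R_u(G_v^0)$. (If one wishes, the conjugacy condition can alternatively be phrased as a first-order formula in $v$, as in the proof of Theorem~\ref{thm:genericstab_conn}, but here it is not needed since the conclusion is obtained pointwise for every $v\in V_{\rm min}$.)
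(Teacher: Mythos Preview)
Your proposal is correct and follows essentially the same route the paper indicates: combine Theorem~\ref{thm:genericstab_conn}(a), which gives ${\mathcal D}(G_v^0)= G\cdot H$ for all $v\in V_{\rm min}$, with the characteristic-zero fact from Section~\ref{sec:Gcr} that ${\mathcal D}(G_v^0)= G\cdot M_v$ for any Levi subgroup $M_v$ of $G_v^0$, exactly as in Remark~\ref{rem:Leviconj}. The extra details you spell out (e.g.\ that $M_v$ is connected, and the verification via $c_\lambda$) are accurate but not strictly needed for the paper's purposes.
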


We give a criterion to ensure that the fibres of $\widetilde{\eta}$ are irreducible.  Define $\widetilde{C}_{\rm min}= \widetilde{\eta}^{-1}(V_{\rm min})$.

\begin{prop}
 Suppose ${\rm char}(k)= 0$ and $N\geq \Theta+ 1$.  Then $\widetilde{C}_{\rm min}= \{(v,\tuple{g})\mid v\in V_{\rm min}, \tuple{g}\in (G_v^0)^N\}$.
\end{prop}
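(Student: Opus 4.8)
The inclusion $\widetilde{C}_{\rm min}\supseteq\{(v,\tuple{g})\mid v\in V_{\rm min},\ \tuple{g}\in(G_v^0)^N\}$ is immediate, since this set lies in $\widetilde{C}$ (Lemma~\ref{lem:cpts}(b)) and over $V_{\rm min}\subseteq V_0$. The plan is to prove the reverse inclusion. First I would reduce, exactly as in the proof of Lemma~\ref{lem:cpts} and using $G_v^0=((G^0)_v)^0$ and $R_u(G_v)=R_u(G_v^0)$, to the case where $G$ is connected and $V$ is irreducible; then $\widetilde{C}$, and hence $\widetilde{C}_{\rm min}=\widetilde{\eta}^{-1}(V_{\rm min})$, is irreducible, and the set $Z:=\{(v,\tuple{g})\mid v\in V_{\rm min},\ \tuple{g}\in(G_v^0)^N\}$ is dense in $\widetilde{C}_{\rm min}$ (Lemma~\ref{lem:cpts}(b) and Remark~\ref{rem:Vmin}). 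So it suffices to show that $Z$ is closed in $\widetilde{C}_{\rm min}$; equivalently, that $F_v=(G_v^0)^N$ for every $v\in V_{\rm min}$. By Lemma~\ref{lem:cpts}(a), for $v\in V_{\rm min}\subseteq V_0$ the set $F_v$ is a union of irreducible components of $G_v^N$, each of dimension $Nr$, and it contains the component $(G_v^0)^N$ through $\tuple{1}$. Thus the goal becomes: no other component of $G_v^N$ lies in $F_v$ when $v\in V_{\rm min}$.

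To set up a contradiction, suppose $\prod_j G_{v,j}\subseteq F_v$ for some $v\in V_{\rm min}$ and some connected components $G_{v,j}$ of $G_v$ with $G_{v,j_0}\neq G_v^0$ for some $j_0$. Picking any $\tuple{g}$ in this component we get $(v,\tuple{g})\in\widetilde{C}$ with $g_{j_0}\notin G_v^0$, and since $\widetilde{C}=\overline{Z}$, the element $g_{j_0}$ is a limit of elements $h_m\in G_{w_m}^0$ with $w_m\in V_{\rm min}$ and $w_m\to v$. I must derive that $\lim_m h_m\in G_v^0$, contradicting $g_{j_0}\notin G_v^0$. Here $\mathrm{char}(k)=0$ enters in two ways. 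On the one hand, in characteristic $0$ we have $\Lie(G_w)=\{X\in\gg\mid X\cdot w=0\}$ of constant dimension $r$ on $V_0$ (the orbits being smooth of dimension $\dim G-r$), so $w\mapsto\Lie(G_w)$ is a morphism from $V_0$ to the Grassmannian of $r$‑planes in $\gg$; its continuity pins down the identity component of any such limit to be $G_v^0$, so the real content is that the \emph{component group} of the limit is trivial. On the other hand, $V_{\rm min}$ carries extra rigidity beyond $V_0$: by Theorem~\ref{thm:genericstab_conn} the degeneration class ${\mathcal D}(G_w^0)=G\cdot H$ is constant on $V_{\rm min}$ for a fixed connected $G$‑completely reducible subgroup $H$, the dimension $\dim R_u(G_w^0)=t$ is constant there, and in characteristic $0$ each $G_w^0$ decomposes as $M_w\ltimes R_u(G_w^0)$ with $M_w$ a Levi subgroup conjugate to $H$ (by Theorem~\ref{thm:genericstab_conn}(a) and the discussion of Levi subgroups in Section~\ref{sec:Gcr}).

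The plan for this last step is to feed that constancy into the limit. Fixing a generating tuple $\tuple{h}$ of $H$ (which exists by Proposition~\ref{prop:topfg}), the orbit $G\cdot\tuple{h}$ is closed because $H$ is $G$‑cr \cite[Cor.~3.7]{BMR}; using this one controls the reductive parts $M_w$ of the stabilisers near $v$ up to conjugacy, while the constancy of $t$ (together with connectedness of unipotent groups in characteristic $0$) controls their unipotent parts. Piecing the reductive and unipotent data together, one should conclude that $\lim_m G_{w_m}^0=G_v^0$, whence $\lim_m h_m\in G_v^0$. I expect this control of the degeneration to be the main obstacle: over all of $V_0$ a limit of elements of the connected stabilisers can genuinely escape the connected stabiliser of the limit point (this is exactly why the analogous statement for generic $v\in V_0$ is not available in general — cf.\ the remarks preceding the statement), and it is only the simultaneous constancy over $V_{\rm min}$ of $\dim R_u(G_v^0)$ and of the class ${\mathcal D}(G_v^0)$, together with the closed‑orbit property of $G$‑completely reducible generating tuples, that prevents this.
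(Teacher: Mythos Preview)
Your setup is fine up to the point where you reduce to showing $F_v=(G_v^0)^N$ for every $v\in V_{\rm min}$, and your observation via Lemma~\ref{lem:cpts}(a) that $F_v$ is a union of components of $G_v^N$ is correct. But the limit argument you propose for the final step has a genuine gap. You want to argue that if $(w_m,\tuple{h}_m)\to(v,\tuple{g})$ with $\tuple{h}_m\in(G_{w_m}^0)^N$ then each $g_j$ lands in $G_v^0$; you correctly identify this as ``the main obstacle'' and then do not solve it. The constancy of $\dim R_u(G_w^0)$ and of ${\mathcal D}(G_w^0)=G\cdot H$ on $V_{\rm min}$ by itself gives you no grip on \emph{which connected component} of $G_v$ a limiting element falls into: the Levi subgroups $M_w$ are only conjugate to $H$, with no continuous choice of conjugating element, and there is no evident mechanism by which ``reductive part conjugate to $H$'' plus ``unipotent radical of dimension $t$'' forces $\lim_m h_m\in G_v^0$. (Sequential-limit language is also suspect over general $k$, but that is a secondary issue compared to the missing idea.)

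The paper's argument avoids limits altogether and instead exploits the partial order $\preceq$ directly, via Theorem~\ref{thm:genericstab_conn}(b). That result says $\widetilde{C}\subseteq\widetilde{\phi}^{-1}(S(H))$, i.e.\ \emph{every} tuple $\tuple{g}$ with $(v,\tuple{g})\in\widetilde{C}$ already satisfies $G\cdot{\mathcal G}(\tuple{g})\preceq G\cdot H$. Now suppose $F_v$ contains a component $D\neq(G_v^0)^N$ for some $v\in V_{\rm min}$. Let $K_1$ be the subgroup of $K:=G_v$ generated by $K^0$ together with the entries of the tuples in $D$; then $K^0\subsetneq K_1$, and $M_1:=\alpha_K(K_1)\leq K/R_u(K)$ is reductive and not connected. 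Choosing $\tuple{g}\in D$ with $\alpha_K({\mathcal G}(\tuple{g}))=M_1$ (Proposition~\ref{prop:topfg}, using $N\geq\Theta+1$), one gets $G\cdot K_1\preceq G\cdot{\mathcal G}(\tuple{g})\preceq G\cdot H$. On the other hand $G\cdot H={\mathcal D}(K^0)\preceq G\cdot K^0\preceq G\cdot K_1$, so by antisymmetry ${\mathcal D}(K_1)=G\cdot H$. Since ${\rm char}(k)=0$ and $M_1$ is reductive, ${\mathcal D}(K_1)=G\cdot M_1$; thus $H$ is conjugate to the non-connected group $M_1$, a contradiction. The characteristic-zero hypothesis is used only at this last step, and no continuity or limit statement is needed.
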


\begin{proof}
 Let $H$ be as in Theorem~\ref{thm:genericstab_conn}.  Let $v\in V_{\rm min}$, and suppose $F_v$ properly contains $(G_v^0)^N$.  Then $F_v$ contains an irreducible component $D\neq (G_v^0)^N$ of $G_v^N$ by Lemma~\ref{lem:cpts}(a).  Set $K= G_v$, set $M= K/R_u(K)$ and let $\alpha_K\colon K\ra M$ be the canonical projection.  Let $K_1$ be the subgroup of $K$ generated by $K^0$ together with the components of each of the tuples in $D$; then $K_1$ properly contains $K^0$.  As $R_u(G_v)$ is connected, $M_1:= \alpha_K(K_1)$ properly contains $M^0$.  In particular, $M_1$ is reductive.  By \cite[Lem.\ 9.2]{Mar}, there exists $\tuple{g}\in D$ such that $\alpha_K(\tuple{g})$ generates $M_1$.  Hence $G\cdot K_1\preceq G\cdot {\mathcal G}(\tuple{g})$.  Now $\tuple{g}\in \widetilde{C}$, so $G\cdot {\mathcal G}(\tuple{g})\preceq G\cdot H$ (Theorem~\ref{thm:genericstab_conn}(b)).  It follows from Lemma~\ref{lem:transitive} that $G\cdot K_1\preceq G\cdot H$.
 
 We have $G\cdot H= {\mathcal D}(K^0)$ by choice of $v$ and Theorem~\ref{thm:genericstab_conn}, so $G\cdot H\preceq G\cdot K^0$ by Lemma~\ref{lem:preccrit}.  Now $G\cdot K^0\preceq G\cdot K_1$ as $K^0\leq K_1$, so $G\cdot H\preceq G\cdot K_1$ by Lemma~\ref{lem:transitive}.  It follows from Lemmas~\ref{lem:preccrit} and \ref{lem:antisymm} that $G\cdot H= {\mathcal D}(K_1)$.  Now ${\mathcal D}(K_1)= G\cdot M_1$ as $M_1$ is reductive and ${\rm char}(k)= 0$, so $G\cdot H= G\cdot M_1$.  But this is impossible as $H$ is connected and $M_1$ is not.  We conclude that $F_v= (G_v^0)^N$ after all.  The result now follows.
\end{proof}

We have seen that we obtain stronger results if we know that generic stabilisers (or their identity components) are $G$-cr.  Reductive subgroups are always $G$-cr in characteristic 0, but things are more complicated in positive characteristic.  Our next result shows that if this $G$-complete reducibility condition fails for connected stabilisers then it fails badly: we prove that if there exists $v\in V_0$ such that $G_v^0$ is reductive but not $G$-cr then generic elements of $V$ have the same property.

\begin{prop}
\label{prop:genericnoncr_conn}
 Let $H$ be as in Theorem~\ref{thm:genericstab_conn}.  Let
 $$ \widetilde{B}_H'= \{v\in V_{\rm min}\mid \mbox{$G_v^0$ is not $G$-cr}\}. $$
 If $\widetilde{B}_H'$ is nonempty then $\widetilde{B}_H'$ has nonempty interior.
\end{prop}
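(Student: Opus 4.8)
The plan is to reformulate ``$G_v^0$ is $G$-completely reducible'' in terms of the dimension of $C_G(G_v^0)$ and then to control this dimension by a single upper semicontinuous function on the connected-stabiliser variety $\widetilde C$.

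First some reductions. The set $\widetilde B_H'$ is $G$-stable, and (by an argument like the one producing Eqn.~(\ref{eqn:conj_conn}) in the proof of Theorem~\ref{thm:genericstab_conn}) constructible; since $G$-complete reducibility is a geometric condition, both the hypothesis and the conclusion are unaffected by extending $k$ to a larger algebraically closed field, so I may assume $k$ is solid, and --- as in the proof of Lemma~\ref{lem:cpts} --- that $G$ is connected and $V$ is irreducible. Let $t$ be the least value of $\dim R_u(G_v)$ for $v\in V_0$. If $t>0$ then $R_u(G_v^0)=R_u(G_v)$ has positive dimension for all $v\in V_{\min}$, so no such $G_v^0$ is reductive, hence none is $G$-cr; thus $\widetilde B_H'=V_{\min}$, which is open, and we are done. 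So assume $t=0$, i.e.\ $V_{\min}=V_{\rm red}$ and every $G_v^0$ with $v\in V_{\min}$ is reductive. Put $c=\dim C_G(H)$. By Theorem~\ref{thm:genericstab_conn}(a), ${\mathcal D}(G_v^0)=G\cdot H$ for $v\in V_{\min}$; since $c_\lambda(G_v^0)$ (for a suitable $\lambda$) is a $G$-cr group representing ${\mathcal D}(G_v^0)$, it is $G$-conjugate to $H$, so by \cite[Thm.~5.8(ii)]{GIT} we get $\dim C_G(G_v^0)\le\dim C_G(c_\lambda(G_v^0))=c$, with equality if and only if $G_v^0$ is $G$-cr. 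Hence $\widetilde B_H'=\{v\in V_{\min}\mid\dim C_G(G_v^0)\le c-1\}$.

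Now I would work inside $\widetilde C$, which is irreducible of dimension $n+Nr$ (Lemma~\ref{lem:cpts}) and which, by Lemma~\ref{lem:cpts}(b), is the closure of $Y_0:=\{(v,\tuple g)\mid v\in V_0,\ \tuple g\in(G_v^0)^N\}$. Define $\psi\colon\widetilde C\to\ZZ$ by $\psi(v,\tuple g)=\dim C_G({\mathcal G}(\tuple g))$. As $C_G({\mathcal G}(\tuple g))$ is the stabiliser of $\tuple g$ for the simultaneous-conjugation action of $G$ on $G^N$, Lemma~\ref{lem:stabdimcty} shows that $\{\tuple g\in G^N\mid\dim C_G({\mathcal G}(\tuple g))\ge m\}$ is closed; pulling back along $\widetilde\phi$ shows $\{\psi\ge m\}$ is closed in $\widetilde C$ for every $m$, so $\psi$ is upper semicontinuous. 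If $v\in V_0$ and $\tuple g\in(G_v^0)^N$ then ${\mathcal G}(\tuple g)\le G_v^0$, so $\psi(v,\tuple g)\ge\dim C_G(G_v^0)$, with equality whenever ${\mathcal G}(\tuple g)=G_v^0$; such a $\tuple g$ exists by Proposition~\ref{prop:topfg} (recall $k$ is solid and $N\ge 3\ge\kappa(G_v^0)+1$). Since $\{v\}\times(G_v^0)^N\sse Y_0$, it follows that $\psi_{\min}:=\min_{\widetilde C}\psi=\min_{Y_0}\psi\le\dim C_G(G_v^0)$ for every $v\in V_0$ (using that $\{\psi\le\psi_{\min}\}$ is open and $Y_0$ is dense). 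Moreover $\{\psi=\psi_{\min}\}$ and $\widetilde\eta^{-1}(V_{\min})$ are nonempty open, hence dense, so $Y_0\cap\{\psi=\psi_{\min}\}\cap\widetilde\eta^{-1}(V_{\min})$ --- the intersection of a dense set with two dense open sets in the irreducible variety $\widetilde C$ --- is dense, in particular nonempty. For $(v,\tuple g)$ in it we get $v\in V_{\min}$, $\tuple g\in(G_v^0)^N$, and $\dim C_G(G_v^0)\le\psi(v,\tuple g)=\psi_{\min}\le\dim C_G(G_v^0)$; thus $\dim C_G(G_v^0)=\psi_{\min}$, and combining with the inequality of the previous sentence, $\psi_{\min}=\min_{w\in V_{\min}}\dim C_G(G_w^0)$.

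To finish, assume $\widetilde B_H'\neq\emptyset$ and pick $v_0\in\widetilde B_H'$; then $G_{v_0}^0$ is not $G$-cr, so $\psi_{\min}\le\dim C_G(G_{v_0}^0)\le c-1$. Put $T=Y_0\cap\{\psi\le\psi_{\min}\}\cap\widetilde\eta^{-1}(V_{\min})$, a constructible subset of $\widetilde C$ which is dense by the previous paragraph. For every $(v,\tuple g)\in T$ we have $v\in V_{\min}$ and $\dim C_G(G_v^0)=\psi_{\min}\le c-1$ (the argument above), so $G_v^0$ is not $G$-cr, i.e.\ $v\in\widetilde B_H'$; hence $\widetilde\eta(T)\sse\widetilde B_H'$. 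Since $\widetilde\eta\colon\widetilde C\to V$ is dominant (indeed surjective, as $\widetilde C$ contains $V\times\{\tuple 1\}$) and $T$ is a dense constructible subset of the irreducible variety $\widetilde C$, the image $\widetilde\eta(T)$ is a dense constructible subset of $V$ and therefore contains a nonempty open subset $O$ of $V$. Then $O\sse\widetilde B_H'$, so $\widetilde B_H'$ has nonempty interior, as required. The step I expect to need the most care is the assertion that the ``identity-component family'' $Y_0$ is a constructible subset of $\widetilde C$ (via generic flatness of the stabiliser family over $V_0$ together with Noetherian induction); a related subtlety is that $\widetilde\phi(\widetilde\eta^{-1}(v))$ may be strictly larger than $(G_v^0)^N$, so that $\psi$ could a priori take values below $\dim C_G(G_v^0)$ on the fibre over $v$ --- this is precisely why one restricts to $Y_0$ throughout and why the upper semicontinuity of $\psi$, which forces $\psi_{\min}$ to be attained already on $Y_0$, is essential.
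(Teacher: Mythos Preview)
Your proof is correct and follows essentially the same line as the paper's: both reformulate ``$G_v^0$ is not $G$-cr'' (for $v\in V_{\rm min}$) as $\dim C_G(G_v^0)<\dim C_G(H)$ via \cite[Thm.~5.8(ii)]{GIT}, pull this back to an open subset of $\widetilde C$ via upper semicontinuity of $\tuple g\mapsto\dim C_G({\mathcal G}(\tuple g))$, intersect with the dense set $Y_0$ and with $\widetilde\eta^{-1}(V_{\rm min})$, and push forward to $V$. The paper's open set is $\widetilde\phi^{-1}(U)$ with $U=\{\tuple m\in G^N\mid \dim G_{\tuple m}<\dim C_G(H)\}$, which is exactly your $\{\psi<c\}$; your further refinement to $\{\psi=\psi_{\min}\}$ is harmless but unnecessary.

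Two small points. First, Proposition~\ref{prop:topfg} applies only to reductive groups, so ``for every $v\in V_0$'' should read ``for every $v\in V_{\rm min}$'' (in the case $t=0$ these are the $v$ with $G_v^0$ reductive); this is all you actually use. Second, the constructibility of $Y_0$ that you flag as needing care is in fact not required: you already established that $\widetilde B_H'$ is constructible, and you have shown it contains the dense set $\widetilde\eta(T)$, so it has nonempty interior regardless of whether $T$ itself is constructible. Your case split on $t$ disposes of the non-reductive case more cleanly than the paper's device of placing a nontrivial element of $R_u(G_v^0)$ in the last coordinate of the tuple.
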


\begin{proof}
 Note that ${\mathcal D}(G_v^0)= G\cdot H$ for all $v\in \widetilde{B}_H'$, by Theorem~\ref{thm:genericstab_conn}.  The argument below shows that $\widetilde{B}_H'= V_{\rm min}\cap \widetilde{\eta}(\widetilde{\phi}^{-1}(U))$, where $U$ is the open set defined below, so $\widetilde{B}_H'$ is constructible.  It follows as in the proof of Theorem~\ref{thm:genericstab_conn} that we can extend the ground field $k$; hence we can assume $k$ is solid.
  
 Suppose $\widetilde{B}_H'$ is nonempty.  Let $v\in \widetilde{B}_H'$.  We can choose $\tuple{g}\in (G_v^0)^N$ such that $\alpha_{G_v^0}({\mathcal G}(\tuple{g}))= G_v^0/R_u(G_v^0)$ (Proposition~\ref{prop:topfg}) and such that $1\neq g_N\in R_u(G_v^0)$ if $G_v^0$ is non-reductive.  This ensures that ${\mathcal G}(\tuple{g})$ is not $G$-cr.  As $H$ is $G$-cr and ${\mathcal D}(G_v^0)= G\cdot H$, there exists $\lambda\in Y(G)$ such that $G_v^0\leq P_\lambda$ and $c_\lambda(G_v^0)$ is conjugate to $H$.  Then $c_\lambda({\mathcal G}(\tuple{g}))= c_\lambda(G_v^0)$ is conjugate to $H$, so ${\rm dim}(G_{\tuple{g}})= {\rm dim}(C_G({\mathcal G}(\tuple{g})))< {\rm dim}(C_G(H))$, since ${\mathcal G}(\tuple{g})$ is not conjugate to $H$ (as ${\mathcal G}(\tuple{g})$ is not $G$-cr).  Consider $G^N$ regarded as a $G$-variety.  Let $U$ be the set of all $\tuple{m}\in G^N$ such that ${\rm dim}(G_\tuple{m})< {\rm dim}(C_G(H))$; then $U$ is an open neighborhood $U$ of $\tuple{g}$, by Lemma~\ref{lem:stabdimcty}.
 
 Let $E= \{(w,\tuple{g})\in \widetilde{C}\cap \widetilde{\phi}^{-1}(U)\cap \widetilde{\eta}^{-1}(V_{\rm min})\mid \tuple{g}\in (G_w^0)^N\}$.  By Lemma~\ref{lem:cpts}(b), $E$ is dense in $\widetilde{C}$, so $\widetilde{\eta}(E)$ is dense in $V$.  To complete the proof, it is enough to show that $\widetilde{\eta}(E)\subseteq \widetilde{B}_H'$: for then $\widetilde{B}_H'$, being constructible and dense, has nonempty interior.  So let $w\in \widetilde{\eta}(E)$.  Pick $\tuple{m}$ such that $(w,\tuple{m})\in E$.  Then $\tuple{m}\in (G_w^0)^N\cap U$, so ${\rm dim}(C_G(\tuple{m}))< {\rm dim}(C_G(H))$, so ${\rm dim}(C_G(G_w^0))< {\rm dim}(C_G(H))$ also.  It follows by running the argument above for $G_v^0$ in reverse that $G_w^0$ is not $G$-cr.  Hence $w\in \widetilde{B}_H'$, as required.
\end{proof}

\begin{rem}
\label{rem:notgeneric}
 A similar argument establishes the following.  Let $H$ be as in Theorem~\ref{thm:genericstab}.   If there exists $(v,\tuple{g})\in C$ such that $v\in V_0$, ${\mathcal D}(G_v)= G\cdot H$ and $G_v$ is not $G$-cr then there is an open neighbourhood $U$ of $(v,\tuple{g})\in C$ such that for all $(w,\tuple{g'})\in U$, $G_w$ is not $G$-cr.  But this does not yield an analogue of Proposition~\ref{prop:genericnoncr_conn} for $G_v$ (see Example~\ref{exmp:cosets}(b))---the problem is that $\eta(U)$ need not be dense in $V$.
\end{rem}

\section{Examples}
\label{sec:ex}

In this section we present some examples that show the limits of our results and illustrate some of the phenomena that can occur.  We assume $N\geq \Theta+ 1$.

\begin{exmp}
\label{exmp:cosets}
 We consider a special case of the set-up from the proof of Theorem~\ref{thm:subgpint}.  Let $G= {\rm PGL}_2(k)$, let $M\leq G$ and let $V$ be the quasi-projective variety $G/M$ with $G$ acting by left multiplication.  We assume that $M\cap gMg^{-1}= 1$ for generic $g\in G$ (this will hold in all the cases we consider).  Then $G_w= 1$ for generic $w\in V$, so the subset $H$ from Theorem~\ref{thm:genericstab} is $1$, and $\widetilde{C}= V\times \{\tuple{1}\}$ by Corollary~\ref{cor:finstab}.  In particular, $F_w= \{\tuple{1}\}$ for all $w\in V$.  Let $v= M\in G/M$; then $G_v= M$.\smallskip\\
 \noindent (a) Let $M$ be a maximal torus of $G$.  Then $F_v$ is properly contained in $M^N$, so we see that $F_v$ need not contain all of $(G_v^0)^N$ when $v\not\in V_0$  (cf.\ Remark~\ref{rem:pathologies}).   The subset $B_H$ is dense but not closed in $V$, as ${\mathcal D}(G_v)= G\cdot M$.\smallskip\\
 \noindent (b) Let $M= \langle x\rangle$, where $x\in G$ is a nontrivial unipotent element.  Then $V= V_0= V_{\rm red}$ and $G_w$ is unipotent for all $w\in V$, so ${\mathcal D}(G_v)= \{1\}$ for all $w\in V$ (where $1$ denotes the trivial subgroup).  Now $G_w= 1$ is $G$-cr for generic $w\in V$ but $G_v$ is not $G$-cr.  Hence the set $\{w\in V_{\rm red}\mid {\mathcal D}(G_w)= G\cdot H\ \mbox{and $G_v$ is not $G$-cr}\}$ is nonempty but not dense in $V$ (cf.\ Remark~\ref{rem:notgeneric}).  The irreducible components of $C$ apart from $\widetilde{C}$ do not dominate $V$.
 \smallskip\\
 (c) Let $M= {\rm PGL}_2(q)$, where $q$ is a power of the characteristic $p$.  We have $V= V_0= V_{\rm red}$.  Now $M$ is $G$-ir, so the set $\{w\in V_{\rm red}\mid G_w\ \mbox{is $G$-ir}\}$ is nonempty but not dense in $V$.  Moreover, the set $O$ from Theorem~\ref{thm:genericstab} does not contain the whole of $V_{\rm red}$.
\end{exmp} 

\begin{exmp}
\label{exmp:unipotent}
 Suppose $G$ is connected and not a torus.  Let $m\in \NN$ and let $V$ be the variety of $m$-tuples of unipotent elements of $G$, with $G$ acting on $V$ by simultaneous conjugation.  We claim that $\{(1,\ldots, 1)\}\times G^N$ is an irreducible component of $C$.  For let $D$ be an irreducible component of $C$ such that $\{(1,\ldots, 1)\}\times G^N\subseteq D$.  Consider the element $(1,\ldots, 1,\tuple{g})\in D$, where the components of $\tuple{g}\in G^N$ are all regular semisimple elements of $G$.  There is an open neighborhood $O$ of $\tuple{g}$ in $G^N$ consisting of tuples of regular semisimple elements.  If $(v_1,\ldots, v_m,\tuple{g'})\in \phi^{-1}(O)$ then each component of $\tuple{g'}$ is a regular semisimple element of $g$ centralising the unipotent elements $v_1,\ldots, v_m$ of $G$.  But this forces $v_1,\ldots, v_m$ to be 1.  It follows that $D= \{(1,\ldots, 1)\}\times G^N$, as claimed.  Hence $\eta(D)= \{(1,\ldots, 1)\}$ and $\eta(D)\cap V_0$ is empty (note also that if $m$ is large enough then the dimension inequality from Lemma~\ref{lem:cpts}(a) is violated).  We see that the set $\{w\in V\mid G_w^0\ \mbox{is $G$-ir}\}$ is nonempty but not dense in $V$.
 
 It is not hard to show that $F_{(1,\ldots, 1)}\subseteq \{g\cdot \tuple{g}\mid \tuple{g}\in U^N\}$, where $U$ is a maximal unipotent subgroup of $G$; in particular, we see as in Example~\ref{exmp:cosets}(a) that $F_v$ need not contain all of $(G_v^0)^N$ when $v\not\in V_0$.  Moreover, since the centraliser of a nontrivial unipotent subgroup of a connected group can never be reductive, the only reductive stabiliser is $G_{(1,\ldots, 1)}$, so $V_{\rm red}$ is empty.
\end{exmp}

\begin{exmp}
\label{exmp:notprinc}
 Let $X$ be an affine variety and let $M$ be a reductive linear algebraic group.  Suppose we are given a morphism $f\colon X\times M\ra X\times G$ of the form $f(x,m)= (x,f_x(m))$, and suppose further that each $f_x\colon M\ra G$ is a homomorphism of algebraic groups.  Set $K_x= {\rm im}(f_x)$.  Define actions of $G$ and $M$ on $X\times G$ by $g\cdot (x,g')= (x,gg')$ and $m\cdot (x,g')= (x,g'f_x(m)^{-1})$.  These actions commute with each other, so we get an action of $G$ on the quotient space $V:= (X\times G)/M$.
 
 Now suppose moreover that ${\rm dim}(K_x)$ is independent of $x$.  Then the $M$-orbits on $X\times G$ all have the same dimension, so they are all closed.  This means the canonical projection $\varphi$ from $X\times G$ to $V$ is a geometric quotient, so its fibres are precisely the $M$-orbits \cite[Cor.\ 3.5.3]{New}.  A straightforward calculation shows that for any $(x,g)\in X\times G$, the stabiliser $G_{\varphi(x,g)}$ is precisely $gK_xg^{-1}$.  It follows that if $X$ is infinite and the subgroups $K_x$ are pairwise non-conjugate as $x$ runs over the elements of a dense subset of $X$ then $V$ has no principal stabiliser.
 
 Here is a simple example.  Let $G= {\rm SL}_2(k)$, let $X= k$ and let $M= C_p\times C_p= \langle \gamma_1,\gamma_2\mid \gamma_1^p= \gamma_2^p= [\gamma_1,\gamma_2]= 1\rangle$.  Define $f\colon X\times M\ra X\times G$ by $f(x,m)= (x,f_x(m))$, where $f_x(\gamma_1^{m_1}\gamma_2^{m_2}):=
 \left(
\begin{array}{cc}
 1 & m_1x+ m_2x^2 \\
 0 & 1
\end{array}
\right)
$.
It is easily checked that $f$ has the desired properties, so $V:= (X\times M)/G$ has no principal stabiliser.  Note also that generic stabilisers are nontrivial finite unipotent groups, but the element $v= \varphi(0,1)$ has trivial stabiliser.

Here is an example where the stabilisers are connected.  Daniel Lond \cite[Sec.\ 6.5]{lond} produced a family, parametrised by $X:= k$, of homomorphisms from $M:= {\rm SL}_2(k)$ to $G:= B_4$ in characteristic 2 with pairwise non-conjugate images.  Using this one can construct a morphism $f\colon X\times M\ra X\times G$ with the desired properties, giving rise to a $G$-variety $V:= (X\times M)/G$ having no principal stabiliser and with all stabilisers connected and reductive.  Results of David Stewart give rise to a similar construction for $G= F_4$ in characteristic 2 \cite[Sec.\ 5.4.3]{stewart}.
\end{exmp}

\begin{exmp}
\label{exmp:guralnick}
 We now give an example where there is a point with trivial stabiliser but generic stabilisers are finite and linearly reductive, using another special case of the set-up from the proof of Theorem~\ref{thm:subgpint}.  We describe a recipe for producing such examples, given in \cite[Cor.\ 3.10]{BGS}.  Take a simple algebraic group $G$ of rank $s$ in characteristic not 2 and set $M= C_G(\tau)$, where $\tau$ is an involution that inverts a maximal torus of $G$.  Then the affine variety $G/M$, with $M$ acting by left multiplication, has precisely one orbit that consists of points with trivial stabiliser.  Let $V= G/M\times G/M$ with the product action of $G$.  Then generic stabilisers of points in $V$ are 2-groups of order $2^s$, but $V$ contains points with trivial stabiliser.  Thus 
$V= V_0= V_{\rm red}$ and $\widetilde{C}= V\times \{\tuple{1}\}$.  Since 2-groups are linearly reductive---and hence $G$-cr---in characteristic not 2, the $G$-cr subgroup $H$ from Theorem~\ref{thm:genericstab} must be a 2-group of order $2^s$, and moreover, $H$ is a principal stabiliser for $V$ by Corollary~\ref{cor:Gcrprinc}.  The set $\{v\in V_{\rm red}\mid {\mathcal D}(G_v)= G\cdot H\}$ does not contain the whole of $V_{\rm red}$ (cf.\ Theorem~\ref{thm:genericstab_conn}).

 We claim that there is at least one irreducible component $D$ of $C$ such that $\ovl{\eta(D)}= V$ but $\eta(D)\neq V$.  Let $D_1,\ldots, D_t$ be the irreducible components of $V$ apart from $C$.  Then $\bigcup_{i= 1}^t \ovl{\eta(D_i)}= V$, so $\eta(D_j)$ is dense in $V$ for some $1\leq j\leq t$.  There are only finitely many conjugacy classes of nontrivial elements of $G$ of order dividing $2^s$, and each such conjugacy class is closed because in characteristic not 2, elements of order a power of 2 are semisimple.  Hence there are regular functions $f_1, \ldots, f_m\colon G\ra k$ for some $m$ such that for all $g\in G$, $g$ is a nontrivial element of order dividing $2^s$ if and only if $f_1(g)= \cdots = f_m(g)= 0$.  For $1\leq l\leq N$, let $Z_l$ be the closed subset $\{(v,g_1,\ldots, g_N)\in C\mid f_1(g_l)=\cdots = f_m(g_l)= 0\}$ of $C$ and let $Z= Z_1\cup\cdots \cup Z_N$.  If $(v,\tuple{g})\in D_j\backslash (D_j\cap \widetilde{C})$ then $\tuple{g}\neq 1$, so some component of $\tuple{g}$ is a nontrivial element of $G$ of order dividing $2^s$, so $(v,\tuple{g})\in Z$.  Hence the open dense subset $D_j\backslash (D_j\cap \widetilde{C})$ of $D_j$ is contained in $Z$, and it follows that $D_j\subseteq Z$.  This implies that if $v\in V$ and $G_v= 1$ then $v\not\in \eta(D_j)$. 
\end{exmp}


\bigskip
{\bf Acknowledgements}:
The author acknowledges the financial support of
 Marsden Grant UOA1021.  He is grateful to Robert Guralnick for introducing him to the reductivity problem for generic stabilisers and for helpful discussions.  He also thanks V. Popov for drawing his attention to references \cite{rich72acta}, \cite{pop} and \cite{LunaVust}, and the referee for helpful comments.


\end{document}